\newcommand{\ud}{\mathrm{d}}
\newcommand{\ii}{\mathrm{i}}
\newcommand{\cH}{\mathcal{H}}
\newcommand{\C}{\mathbb C}
\newcommand{\N}{\mathbb N}
\newcommand{\R}{\mathbb R}
\theoremstyle{plain}
\newtheorem{theorem}{Theorem}[section]
\newtheorem{lemma}[theorem]{Lemma}
\newtheorem{corollary}[theorem]{Corollary}
\newtheorem{proposition}[theorem]{Proposition}
\theoremstyle{definition}
\newtheorem{remark}[theorem]{Remark}
\numberwithin{equation}{section}
\begin{document}

\title[2d singular Hartree equation]
{Standing waves and global well-posedness for the 2d Hartree equation with a point interaction}
\author[V.~Georgiev]{Vladimir Georgiev}
\address[V.~Georgiev]{Department of Mathematics, University of Pisa \\ largo Bruno Pontecorvo 5 \\ I-56127 Pisa (ITALY) \\ and Faculty of Science and Engineering, Waseda University \\
3-4-1, Okubo, Shinjuku-ku, Tokyo 169-8555 (JAPAN) \\ and  Institute of Mathematics and Informatics at Bulgarian Academy of Sciences, Acad. Georgi Bonchev Str., Block 8, 1113 Sofia (BULGARIA)}
\email{georgiev@dm.unipi.it}
\author[A.~Michelangeli]{Alessandro Michelangeli}
\address[A.~Michelangeli]{Institute for Applied Mathematics, and Hausdorff Center for Mathematics, University of Bonn \\ Endenicher Allee 60 \\ D-53115 Bonn (GERMANY) \\ and TQT Trieste Institute for Theoretical Quantum Technologies, Trieste (ITALY)}
\email{michelangeli@iam.uni-bonn.de}
\author[R.~Scandone]{Raffaele Scandone}
\address[R.~Scandone]{Gran Sasso Science Institute, viale Francesco Crispi 7 \\ I-67100 L'Aquila (ITALY)}
\email{raffaele.scandone@gssi.it}


\begin{abstract}
We study a class of two-dimensional non-linear Schr\"odinger equations with point-like singular perturbation and Hartree non-linearity. The point-like singular perturbation of the free Laplacian induces appropriate perturbed Sobolev spaces that are necessary for the study of ground states and evolution flow. We include in our treatment both mass sub-critical and mass critical Hartree non-linearities. Our analysis is two-fold: we establish existence, symmetry, and regularity of ground states, and we demonstrate the well-posedness of the associated Cauchy problem in the singular perturbed energy space. The first goal, unlike other treatments emerging in parallel with the present work, is achieved by a non-trivial adaptation of the standard properties of Schwartz symmetrisation for the modified Weinstein functional. This produces, among others, modified Gagliardo-Nirenberg type inequalities that allow to efficiently control the non-linearity and obtain well-posedness by energy methods. The evolution flow is proved to be global in time in the defocusing case, and in the focusing and mass sub-critical case. It is also global in the focusing and mass critical case, for initial data that are suitably small in terms of the best Gagliardo-Nirenberg constant.
\end{abstract}

\date{\today}

\subjclass[2010]{35A15,35Q41,35Q55,47J30,
47J35,
81Q10,
81Q80}

\keywords{Hartree equation, point-like singular perturbation of the Laplacian, Green function, Weinstein functional, Schwartz re-arrangement}

\thanks{Partially supported by the Italian National Institute for Higher Mathematics -- INdAM (V.G., A.M., R.S.),
 the project ``Problemi stazionari e di evoluzione nelle equazioni di campo non-lineari dispersive'' of GNAMPA -- Gruppo Nazionale per l'Analisi Matematica (V.G.), the PRIN project no.~2020XB3EFL of the MIUR -- Italian Ministry of University and Research (V.G.), the Institute of Mathematics and Informatics at the Bulgarian Academy of Sciences (V.G.), the Top Global University Project at Waseda University (V.G.), and the Alexander von Humboldt Foundation (A.M.). The last two authors gratefully acknowledge the kind hospitality of V.G.~at the Department of Mathematics of the University of Pisa, where a large part of this work was carried on.
}

\maketitle


\section{Introduction and main results}\label{sec:intro-main}

In this work we study the existence of standing waves and the well-posedness of the two-dimensional, point-like perturbed, singular Hartree equation
\begin{equation}\label{singular_hartree}
\ii\partial_t\psi=-\Delta_{\alpha}\psi+(w*|\psi|^2)\psi\,,
\end{equation}
in the complex-valued unknown $\psi\equiv \psi(t,x)$, $(t,x)\in[0,\infty)\times \mathbb{R}^2$, where the convolution potential $w$ is a real-valued measurable function, and $-\Delta_{\alpha}$ is the self-adjoint point-like singular perturbation of the ordinary negative Laplacian on $L^2(\mathbb{R}^2)$-functions, located at the origin $x=0$ and with inverse scattering length given by $\alpha\in\mathbb{R}$, in suitable units.


There are multiple motivations for the recently increasing interest towards non-linear Schr\"{o}\-dinger equations (NLS) of Hartree type like \eqref{singular_hartree} (i.e., convolutive, non-local), or also with local semi-linearity, with the additional point-like perturbation at one or more distinguished points. On the one hand, such equations are realistic effective models for the time evolution of the density of particles $|\psi(t,x)|^2$ of a large assembly of identical bosons at ultra-low temperature interacting through a two-body potential $w$ and additionally coupled to strong, delta-like impurities located at fixed points in space -- one impurity at $x=0$ in the present case. Without the singular perturbation, hence with the Laplacian $-\Delta$ in place of $-\Delta_\alpha$, or also with variants such as the magnetic or the semi-relativistic Laplacian, a variety of highly sophisticated mathematical techniques is today available to rigorously derive such effective NLS, in the limit of infinitely many particles, from the linear, many-body Schr\"{o}dinger equation \cite{Pickl-LMP-2011,Benedikter-Porta-Schlein-2015}. For this apparatus to work, and the same would apply to the point-like, singular-perturbed Laplacian, \emph{it is crucial to know that the emerging NLS is well-posed} in a convenient regularity space, so as to have suitable a priori norms available that are uniformly conserved in time and can be exploited in the control of the limit of infinitely many particles.

From an analogous perspective, the non-perturbed version of \eqref{singular_hartree} has a meaning of effective equation for the quantum plasma dynamics, when considered with the magnetic Laplacian and as part of coupled systems of Maxwell-Schr\"{o}dinger type, a scenario already well under study both for its rigorous derivation as an effective description from the many-body Pauli-Fierz Hamiltonian \cite{Leopold-Pickl-MaxwSchr-2016}, and concerning well-posedness and standing waves \cite{Coclite-Georgiev-MSsolitary-2004,Nakamura-Wada-MaxwSchr_CMP2007,Ginibre-Velo-2008,Colin-Watanabe-2019,AntMarcScand-MaxSchr-2019}: here too, adding a point-like perturbation is of relevance for a more accurate modelling with impurities.

 On the other hand, there is an autonomous relevance of \eqref{singular_hartree} \emph{per se}, as \eqref{singular_hartree} poses novel technical problems arising in the study of its various features as a dynamical equation (well-posedness, scattering, ground state), precisely due to the singular point-like perturbed nature of $-\Delta_\alpha$, which makes an amount of standard analytic tools from the theory of the linear and non-linear Schr\"{o}dinger equation not directly applicable.

 Indeed, whereas \emph{in dimension one} the point-like nature of the singularity has the explicit structure $-\Delta+\delta(x)$, thereby allowing for a much more accurate analysis of \eqref{singular_hartree}, and its counterparts with local semi-linearity, in terms of local and global well-posedness, blow-up, scattering, asymptotic stability, solitons, standing waves, ground state \cite{Adami-Sacchetti-2005-1D-NLS-delta,DellaCasa-Sacchetti-2006,Adami_JPA2009_1D_NLS_with_delta,Adami-Noja-Visciglia-2013,Adami-Noja-2014,AnguloPava-Ferreira-2014,Banica-Visciglia-2016,Ikeda-Inui-2017,Ianni-LeCoz-Royer-2017,Cuccagna-Maeda-SIAM2019,AnguloPava-Hernandez-2019,Masaki-Murphy-Segata-2019,Masaki-Murphy-Segata-2020,Cuccagna-Maeda-DCDSsurvey2021}, the case of \emph{dimension two or three} presents a much more uncharted territory, ultimately due to the circumstance that $-\Delta_\alpha$ is not built as a form perturbation.

 This has led only recently to our characterisation \cite{Georgiev-M-Scandone-2016-2017} of a whole family of fractional powers of the three-dimensional $-\Delta_\alpha$ and hence of the new `perturbed' Sobolev spaces adapted to it, whence then the first proof of local and global well-posedness of the three-dimensional counterpart of \eqref{singular_hartree} \cite{MOS-SingularHartree-2017}. In the same spirit, the well-posedness of the two- and three-dimensional point-perturbed singular NLS with local semi-linearity has been later established in \cite{Caccia-Finco-Noja-deltaNLS-2020}, also outlining perspectives and open problems (on the types of blow-up and scattering). And it is only in the course of the final draft of this work that two significant contributions appeared on the issues of existence and stability or instability of the ground state of the two-dimensional point-perturbed singular NLS with local semi-linearity, respectively in the mass sub-critical \cite{Adami-Boni-Carlone-Tentarelli-2021} and the energy sub-critical \cite{Fukaya-Georgiev-Ikeda-2021} case. Also 	the linear flow generated by $-\Delta_\alpha$, in two dimensions, has attracted recent attention, significantly with the proof of the $L^p$-boundedness of the wave operator associated to the pair $(-\Delta_\alpha,-\Delta)$ \cite{CMY-2018-2Dwaveop,CMY-2018-2DwaveopERR,Yajima-2021-Lpbdd-delta-2D}, whence also Strichartz and dispersive estimates.

 In order to enter the technical aspects and present our main results, let us concisely recall (see, e.g., \cite[Chapter I.5]{albeverio-solvable}) that, for given $\alpha\in\mathbb{R}$, $-\Delta_{\alpha}$ is that self-adjoint extension on $L^2(\mathbb{R}^2)$ of the non-negative symmetric operator  $-\Delta|_{C^\infty_0(\mathbb{R}^2\setminus\{0\})}$ whose operator domain $\mathcal{D}(-\Delta_\alpha)$ and action are defined as
 \begin{eqnarray}
  {\;}\qquad\qquad \mathcal{D}(-\Delta_{\alpha} ) &=& \Big\{g\in L^2(\mathbb{R}^2)\,\Big|\,g=f_{\omega}+\frac{ f_{\omega}(0)}{\beta_\alpha(\omega)}\,\mathcal{G}_{\omega}\textrm{ with }f_{\omega}\in H^2(\mathbb{R}^2)\Big\}\,, \label{eq:op_dom} \\
  (\omega-\Delta_{\alpha})\,g&=&(\omega-\Delta)\,f_{\omega}\,, \label{eq:opaction}
 \end{eqnarray}
 where $\omega>0$ is an arbitrarily fixed constant,
 \begin{equation}\label{eq:Gomega-intro}
  \mathcal{G}_{\omega}(x) \;:=\; (2\pi)^{-2} \int_{\mathbb{R}^2} e^{-\mathrm{i} x\xi} \frac{\ud \xi}{\omega+|\xi|^2}\,,\quad x\in\mathbb{R}^2
 \end{equation}
 is the Green function of the Laplacian on $\mathbb{R}^2$, thus satisfying $(-\Delta+\omega)\mathcal{G}_{\omega}=\delta$ as a distribution identity, and
 \begin{equation}\label{eq.2d1}
   \beta_{\alpha} (\omega)\;:=\; \alpha + \frac{\gamma}{2\pi} + \frac{1}{2\pi} \ln \left( \frac{\sqrt{\omega}}{2} \right),
\end{equation}
$\gamma\approx 0.577$ denoting the Euler-Mascheroni constant. Observe that $\mathcal{G}_\omega$ displays the local leading singularity $\sim\!(2\pi)^{-1}\log|x|$ as $x\to 0$. Equivalently, $-\Delta_\alpha$ is the following rank-one perturbation, in the resolvent sense, of the ordinary self-adjoint two-dimensional negative Laplacian,
\begin{equation}\label{eq:res_formula}
(-\Delta_{\alpha} +\omega\mathbbm{1})^{-1} g\;=\;(-\Delta+\omega\mathbbm{1})^{-1}g + \frac{\,\langle g, \mathcal{G}_{\omega} \rangle_{L^2}}{\beta_{\alpha}(\omega) }\,\mathcal{G}_{\omega} \,,
\end{equation}
valid for every $g\in L^2(\mathbb{R}^2)$ and $\omega\in\mathbb{R}^+\setminus\{-e_\alpha\}$, with
\begin{equation}
 e_\alpha\;=\;-4 \,e^{-2(2\pi\alpha+\gamma)}\,.
\end{equation}
 In fact, $-\Delta_\alpha$ has essential spectrum $[0,+\infty)$, which is entirely absolutely continuous, and one isolated, non-degenerate, negative eigenvalue $e_\alpha$ with (non-normalised) eigenfunction $\mathcal{G}_{-e_\alpha}$.

 One can also see that on those functions $g\in\mathcal{D}(-\Delta_\alpha)$ vanishing on an open $\mathcal{U}\subset\mathbb{R}^2$, one has $(-\Delta_\alpha g)\big|_{\mathcal{U}}\equiv 0$, and therefore away from the origin $-\Delta_\alpha$ acts precisely as $-\Delta$. The non-trivial interaction with the origin is encoded by a boundary condition of the form \eqref{eq:op_dom}, or equivalent ones: actually, $-\Delta_\alpha$ is only a perturbation of $-\Delta$ in the $L^2$-sector of spherically symmetric functions, and with $s$-wave scattering length equal to $(-2\pi\alpha)^{-1}$. The above stringent interpretation of the contact, point-like nature of the interaction is confirmed by the fact that $-\Delta_\alpha$ can be also reconstructed as the limit, in the resolvent sense, of ordinary Schr\"{o}dinger operators $-\Delta+V_\varepsilon(x)$, with potentials $V_\varepsilon$ on a spatial scale $\varepsilon^{-1}$ that shrink and squeeze to a peaked profile around $x=0$ as $\varepsilon\downarrow 0$ \cite{AHKH-1987-2D,MS-2018-shrinking-and-fractional}.

 The expressions \eqref{eq:op_dom} or \eqref{eq:res_formula} make sense also when one takes formally $\alpha=\infty$, and reproduce in this case the ordinary self-adjoint Laplacian $-\Delta_{\alpha=\infty}=-\Delta$ on $L^2(\mathbb{R}^2)$ with domain $H^2(\mathbb{R}^3)$. The collection $\{-\Delta_\alpha|\alpha\in\mathbb{R}\cup\{\infty\}\}$ provides the whole family of self-adjoint extensions of $-\Delta|_{C^\infty_0(\mathbb{R}^2\setminus\{0\})}$, of which the one with $\alpha=\infty$ is the Friedrichs extension.

 Two natural `regularity' spaces adapted to $-\Delta_\alpha$ are respectively, its operator domain $H^2_{\alpha}(\mathbb{R}^2):=\mathcal{D}(-\Delta_\alpha)$ and form domain $H^1_\alpha(\mathbb{R}^2):= \mathcal{D}[-\Delta_\alpha]$, referred to also as \emph{energy space}. As $-\Delta_\alpha$ is lower semi-bounded, and the bottom of its spectrum is $e_\alpha$, $H^1_\alpha(\mathbb{R}^2)$ is complete with respect to the energy norm
  \begin{equation}\label{eq:energynormH1alpha}
  \|g\|_{H_{\alpha}^1}\;:=\;\big\|(-\Delta_{\alpha}-e_{\alpha}+1)^{\frac{1}{2}}g\big\|_{L^2}
 \end{equation}
 (and a Hilbert space with respect to the scalar product induced by \eqref{eq:energynormH1alpha}), $H^2_{\alpha}(\mathbb{R}^2)$ is dense in $H^1_{\alpha}(\mathbb{R}^2)$, and
 \begin{equation}\label{eq:energynormH1alpha-bis}
  \|g\|_{H_{\alpha}^1}^2\;=\;(-\Delta_{\alpha})[g]+(1-e_{\alpha})\|g\|_{L^2}^2\,,
 \end{equation}
 with the customary notation $(-\Delta_{\alpha})[f]:=\mathcal{Q}_{\alpha}(f,f)$, where $\mathcal{Q}_{\alpha}:H^1_{\alpha}(\mathbb{R}^2)\times H^1_{\alpha}(\mathbb{R}^2)\to\C$ is the closure of the Hermitian form $(f,g)\mapsto\langle -\Delta_\alpha f,g\rangle_{L^2}$. By standard structural properties of the form domain of lower semi-bounded self-adjoint extensions of symmetric operators (see, e.g., \cite{GMO-KVB2017}), one has
 \begin{equation}\label{eq:H1alphastructure}
  H^1_{\alpha}(\mathbb{R}^2)\;=\;H^1(\mathbb{R}^2)\dotplus \mathrm{span}\{ \mathcal{G}_\omega\}\,,\qquad \omega>0\,,
 \end{equation}
 the above sum not depending on the choice of $\omega$ (whereas the decomposition of an element $g\in H^1_{\alpha}(\mathbb{R}^2)$ does), and
 \begin{equation}\label{eq:equiv-norm}
  \|f+c\,\mathcal{G}_\omega \|_{H_{\alpha}^1}^2\;\approx_\omega\;\|f\|^2_{H^1}+|c|^2\,,\qquad f\in H^1(\mathbb{R}^2)\,,\; c\in\mathbb{C},
 \end{equation}
 in the sense of equivalence of norms.

 We shall require a somewhat restrictive condition on the convolutive potential in the Hartree non-linearity, namely that it has a definite sign (positive, for concreteness) and monotonicity, thus re-writing \eqref{singular_hartree} as
 \begin{equation}\label{singular_hartree-theta}
\ii\partial_t\psi=-\Delta_{\alpha}\psi+\theta(w*|\psi|^2)\psi,
\end{equation}
  where the parameter $\theta=\pm 1$ selects, respectively, the defocusing or focusing behaviour of the non-linearity, and
  \begin{equation}\label{ass_w}
  \begin{array}{c}
   \textrm{\emph{$w\in L^{p_1}(\mathbb{R}^2)+L^{p_2}(\mathbb{R}^2)$, with $p_1,p_2\in[1,\infty)$,}} \\
   \textrm{\emph{is non-negative, radial, non-increasing, and not identically zero.}}
  \end{array}
  \end{equation}
 This includes the meaningful cases of pure-power behaviour $w(x)=|x|^{-\eta}$, $\eta\in(0,2)$, and more severe locally-$L^1$ singularities as well. In fact, an amount of intermediate results of the present work are obtained under more general conditions than \eqref{ass_w} (this is the case for the analysis of local well-posedness, which, as generally known and as will emerge in Section \ref{sec:LWP}, or also in the above-mentioned recent study \cite{Adami-Boni-Carlone-Tentarelli-2021}, is insensitive of the sign definiteness or monotonicity of $w$); yet, \eqref{ass_w} is crucial for the technique we employ for the detailed study of the ground state, as further remarked in a moment.

 Our two main focuses in this work are the characterisation of \emph{standing waves} for \eqref{singular_hartree-theta} in the focusing case, and the \emph{global well-posedness} both in the defocusing and focusing case, in the latter scenario covering both the mass sub-critical and the mass critical regimes.

 With the above terminology we mean the following. For $w\in L^{p_1}(\mathbb{R}^2)+L^{p_2}(\mathbb{R}^2)$, with $p_1,p_2\in[1,\infty)$, and $p:=\min\{p_1,p_2\}$, we can prove (Lemma \ref{lem:Hnonlin-controlled})
 \begin{equation}\label{eq:gani-pre}
\int_{\mathbb{R}^2}(w\ast|\psi|^2)|\psi|^2 \,\ud x \;\leqslant C\,\|\psi\|_{H_{\alpha}^1}^{\frac{2}{p}} \|\psi\|_{L^2}^{4-\frac{2}{p}}\qquad\forall\psi\in H_{\alpha}^1\,,
\end{equation}
and we denote by $C_{\textsc{gn}}$ the (Gagliardo-Nirenberg type) optimal constant (dependent on $\alpha$ and $w$) in \eqref{eq:gani-pre}. If $p >1$, then the r.h.s.~is sub-quadratic in $\|\psi\|_{H_{\alpha}^1}$ and the non-linearity is said \emph{mass sub-critical}. If $p=1$,  then the non-linearity is \emph{mass critical}.

 In fact, again when $w\in L^{p_1}(\mathbb{R}^2)+L^{p_2}(\mathbb{R}^2)$, with $p_1,p_2\in[1,\infty)$, we shall prove that the Hartree non-linearity is \emph{energy sub-critical}, explicitly,
 \begin{equation}
  \big\|(w\ast|\psi|^2)\psi\big\|_{H_{\alpha}^{-1}}\;\lesssim\;\|\psi\|_{H_{\alpha}^1}^3
 \end{equation}
 (see estimate \eqref{eq:esc} in the following), where $H^{-1}_\alpha(\mathbb{R}^2)$ denotes the topological dual of $H^1_\alpha(\mathbb{R}^2)$. Actually, in the mass sub-critical case, the Hartree non-linearity is shown to be even locally Lipschitz in the energy space (see estimate \eqref{llhn}). It is therefore natural to study the solution theory for \eqref{singular_hartree-theta} in the energy space $H_{\alpha}^1(\mathbb{R}^2)$.

 Concerning our first focus, we recall that standing wave solutions to \eqref{singular_hartree-theta} when $\theta=-1$ are of the form $e^{-\ii \lambda t}Q_{\alpha}$ for some $\lambda\in\mathbb{R}$ and some non-identically-zero function $Q_{\alpha}\in H^1_{\alpha}(\mathbb{R}^2)$ satisfying
 \begin{equation}\label{t:20}
-\Delta_{\alpha}  Q_{\alpha} + \lambda Q_{\alpha} - (w*|Q_{\alpha}|^2) Q_{\alpha}\;=\;0\,.
\end{equation}
  Observe that \eqref{t:20} does make sense, when $Q_{\alpha}\in H_{\alpha}^1$, at least as an identity in $H^{-1}_\alpha(\mathbb{R}^2)=H^1_{\alpha}(\R^2)^*$.

In order to prove the \emph{existence} of standing waves, we introduce the Weinstein type functional
\begin{equation}\label{eq.wp1}
    \mathcal{W}^{(w)}_{\alpha,\lambda}(v)\;:=\; \frac{(-\Delta_{\alpha})[v] +\lambda \|v\|_{L^2}^2}{\;\displaystyle\left(\int_{\mathbb{R}^2}(w\ast|v|^2)|v|^2 \ud x\right)^{\!\frac{1}{2}}}\,,\qquad  v\in H^1_\alpha(\mathbb{R}^2)\setminus\{0\}\,.
\end{equation}
For $H^2_\alpha$-functions the numerator above reads simply $\langle(-\Delta_{\alpha} +\lambda)v,v \rangle_{L^2}$. When $\lambda>|e_{\alpha}|$ the functional \eqref{eq.wp1} is actually bounded from below by a positive constant, as a consequence of estimate \eqref{eq:gani-pre}. Associated to the Weinstein functional we consider the optimisation problem
 \begin{equation}\label{200}
  \inf_{  \substack{ v \in H^1_{\alpha} (\mathbb{R}^2) \\ v\neq 0}  } \mathcal{W}^{(w)}_{\alpha,\lambda}(v)\,.
 \end{equation}

We will show in Lemma \ref{le:EL} that $\mathcal{W}^{(w)}_{\alpha,\lambda}$ is of class $\mathcal{C}^1$ on $H_{\alpha}^1(\R^2)\setminus\{0\}$, and that any ground state $v_{\alpha}$ (i.e., a minimiser to \eqref{200}) satisfies the Euler-Lagrange equation
 \begin{equation}\label{t:221}
	-\Delta_{\alpha} v_{\alpha} + \lambda v_{\alpha} - \Lambda_{\alpha,\lambda}^{(w)}(v_{\alpha})(w*|v_{\alpha}|^2) v_{\alpha}\;=\;0
\end{equation}
as an identity in $H_{\alpha}^{-1}(\R^2)$, where
\begin{equation}\label{eq:Lambdav}
	\Lambda_{\alpha,\lambda}^{(w)}(v)\;:=\;\frac{(-\Delta_{\alpha})[v] +\lambda \|v\|_{L^2}^2}{\;\displaystyle\int_{\mathbb{R}^2}(w\ast|v|^2)|v|^2 \ud x}\,,\qquad v\in H_{\alpha}^1(\R^2)\setminus\{0\}.
	\end{equation}
Observe that $\mathcal{W}^{(w)}_{\alpha,\lambda}$ and $\Lambda_{\alpha,\lambda}^{(w)}$ are homogeneous, respectively, of order zero and $-2$ (i.e., $\mathcal{W}^{(w)}_{\alpha,\lambda}(\mu v)=\mathcal{W}^{(w)}_{\alpha,\lambda}(v)$ and $\Lambda_{\alpha,\lambda}^{(w)}(\mu v)=\mu^{-2}\Lambda_{\alpha,\lambda}^{(w)}(v)$ $\forall\mu\in\R\setminus\{0\}$), and moreover $\Lambda_{\alpha,\lambda}^{(w)}(v)>0,$ $\lambda>|e_{\alpha}|$. Therefore, if $v_{\alpha}$ is a minimiser of \eqref{200}, so too is $Q_{\alpha}:=\sqrt{\Lambda_{\alpha,\lambda}^{(w)}(v_{\alpha})}\,v_{\alpha}$, and for the latter function one has $\Lambda_{\alpha,\lambda}^{(w)}(Q_{\alpha})=1$. This explains that from the solutions to the Euler-Lagrange equation \eqref{t:221} the
standing waves for \eqref{singular_hartree-theta} have precisely the form $e^{-i\lambda t}Q_{\alpha}$ with $Q_{\alpha}$ solving \eqref{t:20}.

We have the following result on the existence and symmetry of ground states.

\begin{theorem}\label{vs:10}
	Let $\alpha\in\mathbb{R}$ and $\lambda>|e_{\alpha}|$, and let $w$ satisfy \eqref{ass_w}. Then,
	\begin{equation}\label{2000}
		\widetilde{\mathcal{W}}^{(w)}_{\alpha,\lambda}  \;:=\; \inf_{  \substack{ v \in H^1_{\alpha} (\mathbb{R}^2) \\ v\neq 0}  } \mathcal{W}^{(w)}_{\alpha,\lambda}(v)\;>\;0\,,
	\end{equation}
	and the optimisation problem \eqref{200} admits at least one minimiser. For every minimiser $v_{\alpha}$ to \eqref{200} the following facts hold:
	\begin{itemize}
		\item $v_{\alpha}$ belongs to $H^2_{\alpha}(\mathbb{R}^2)$, hence it admits  the (canonical) representation
		\begin{equation}\label{eq:canore}
			v_{\alpha}  = f_{\alpha}  + \frac{f_{\alpha} (0)}{\beta_{\alpha} (\lambda)} \mathcal{G}_\lambda
		\end{equation}
	with $f_{\alpha}\in H^2(\R^2)$;
		\item $|f_{\alpha}|$ is spherical symmetric, strictly positive, and strictly radially decreasing;
		\item $v_{\alpha}$ has constant phase, that is,
		$$v_{\alpha}=e^{\ii\theta_{\alpha}}\Big(|f_{\alpha}|  +\frac{|f_{\alpha} (0)|}{\beta_{\alpha} (\lambda)} \mathcal{G}_\lambda\Big)$$ for some $\theta_{\alpha}\in[0,2\pi)$.
\end{itemize}
\end{theorem}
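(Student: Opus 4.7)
The argument splits into four interlocked pieces: positivity of the infimum, a Schwartz symmetrisation reduction adapted to the singular structure of $-\Delta_\alpha$, a direct-method existence argument on the reduced radial class, and an upgrade of regularity and symmetry of any minimiser. The backbone of the whole proof is the decoupled identity
\[
N(v)\;:=\;(-\Delta_\alpha)[v]+\lambda\|v\|_{L^2}^2\;=\;\|\nabla f\|_{L^2}^2+\lambda\|f\|_{L^2}^2+\beta_\alpha(\lambda)|c|^2,
\]
valid for $v=f+c\,\mathcal{G}_\lambda\in H^1_\alpha(\mathbb{R}^2)$ (with $f\in H^1$, $c\in\mathbb{C}$), derived first on $H^2_\alpha$ via $(\lambda-\Delta_\alpha)(f+c\,\mathcal{G}_\lambda)=(\lambda-\Delta)f$ together with $(\lambda-\Delta)\mathcal{G}_\lambda=\delta_0$, and extended to $H^1_\alpha$ by density. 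Since $\lambda>|e_\alpha|$ forces $\beta_\alpha(\lambda)>0$, the three contributions to $N(v)$ are individually non-negative and the $f$- and $c$-sectors decouple; positivity $\widetilde{\mathcal{W}}^{(w)}_{\alpha,\lambda}>0$ follows at once from \eqref{eq:gani-pre} together with the norm equivalence between $N(\cdot)^{1/2}$ and $\|\cdot\|_{H^1_\alpha}$.

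Next, I would perform the adapted Schwartz symmetrisation. After a global phase rotation (which leaves $\mathcal{W}^{(w)}_{\alpha,\lambda}$ invariant), assume $c\ge 0$, and set $\tilde v:=|f|^*+c\,\mathcal{G}_\lambda\in H^1_\alpha$, with $|f|^*$ the classical symmetric decreasing rearrangement of $|f|$. The decoupled identity combined with P\'olya--Szeg\H{o} gives $N(\tilde v)\le N(v)$. For the denominator, I would use the pointwise bound $|v|^2\le(|f|+c\,\mathcal{G}_\lambda)^2$, expand both $(|f|+c\,\mathcal{G}_\lambda)^2$ and $|\tilde v|^2=(|f|^*+c\,\mathcal{G}_\lambda)^2$ into monomials in $|f|$, $|f|^*$ and $\mathcal{G}_\lambda$, and control each resulting bilinear Hartree term $\int\!\!\int w(x-y)|f|^a(x)\mathcal{G}_\lambda^{2-a}(x)|f|^b(y)\mathcal{G}_\lambda^{2-b}(y)\,\ud x\,\ud y$ by its rearranged counterpart via a Hardy--Littlewood/Riesz inequality, exploiting that both $w=w^*$ and $\mathcal{G}_\lambda=\mathcal{G}_\lambda^*$ are already their own rearrangements. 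This produces $D(\tilde v)\ge D(v)$, hence $\mathcal{W}^{(w)}_{\alpha,\lambda}(\tilde v)\le\mathcal{W}^{(w)}_{\alpha,\lambda}(v)$, and the minimisation may be restricted to the class $\{h+c\,\mathcal{G}_\lambda:h\in H^1,\ h\ge 0\text{ radial decreasing},\ c\ge 0\}$.

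Existence follows then by the direct method: a minimising sequence $v_n=h_n+c_n\mathcal{G}_\lambda$ in that class is bounded in $H^1_\alpha$, whence $h_n\rightharpoonup h_\infty$ in $H^1(\mathbb{R}^2)$ and $c_n\to c_\infty$; Strauss's compact embedding for radial $H^1$-functions into $L^q(\mathbb{R}^2)$, $q\in(2,\infty)$, combined with Young's inequality, turns $D$ into a continuous functional along the sequence, while $N$ is weakly lower semicontinuous, so $v_\infty$ is a minimiser. For the structure of any minimiser $v_\alpha$, Lemma~\ref{le:EL} delivers \eqref{t:221} in $H^{-1}_\alpha$; since $v_\alpha\in H^1_\alpha\subset L^q$ for all $q<\infty$ and $w\in L^{p_1}+L^{p_2}$, Young's inequality forces $w*|v_\alpha|^2\in L^\infty$, hence $(w*|v_\alpha|^2)v_\alpha\in L^2$, so $(-\Delta_\alpha+\lambda)v_\alpha\in L^2$ places $v_\alpha\in H^2_\alpha$, with the canonical representation \eqref{eq:canore} read off \eqref{eq:op_dom}. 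The radial decreasing profile and strict monotonicity of $|f_\alpha|$ are recovered from the equality cases in strict P\'olya--Szeg\H{o}; strict positivity of $|f_\alpha|$ is obtained from the representation $f_\alpha=(-\Delta+\lambda)^{-1}[\Lambda^{(w)}_{\alpha,\lambda}(v_\alpha)(w*|v_\alpha|^2)v_\alpha]$ together with the strict positivity of the Green kernel $\mathcal{G}_\lambda$; finally, the constant-phase statement is obtained by combining the equality case in P\'olya--Szeg\H{o} for $f_\alpha$ (forcing $f_\alpha=e^{i\theta_\alpha}|f_\alpha|$) with the canonical relation $c_\alpha=f_\alpha(0)/\beta_\alpha(\lambda)$, which propagates the same phase $\theta_\alpha$ onto the coefficient of $\mathcal{G}_\lambda$.

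\textbf{Main obstacle.} The delicate point is the denominator inequality in the symmetrisation reduction. The classical Riesz inequality rearranges all three kernels simultaneously, but here $\mathcal{G}_\lambda$ must be left untouched — it is already its own symmetric decreasing rearrangement, yet it carries the singular boundary condition encoded in the operator domain. The decoupled formula for $N(v)$ and the monomial expansion of the Hartree self-energy are the indispensable tools that reduce the full rearrangement to manageable bilinear Riesz-type inequalities with pre-symmetrised kernels.
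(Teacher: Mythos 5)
Your architecture coincides with the paper's: positivity of the infimum from the Gagliardo--Nirenberg-type bound, a symmetrisation acting only on the regular part $f$ while leaving $\mathcal{G}_\lambda$ fixed, the direct method on the resulting radial class with Strauss compactness, and the Euler--Lagrange equation plus equality-case rigidity to upgrade every minimiser. Three steps, however, would not go through as you describe them. First, the denominator inequality: expanding $(|f|+c\,\mathcal{G}_\lambda)^2$ produces cross terms such as $\iint w(x-y)\,|f|(x)\,\mathcal{G}_\lambda(x)\,|f|(y)^2\,\ud x\,\ud y$, an integrand with five non-negative factors of linear combinations of $(x,y)$, two of which are \emph{distinct} functions evaluated at the same point. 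The Hardy--Littlewood or three-function Riesz inequality cannot symmetrise only $|f|$ here, because $(|f|\,\mathcal{G}_\lambda)^*\neq|f|^*\mathcal{G}_\lambda$; the correct tool is the multilinear Brascamp--(Friedberg--)Lieb--Luttinger inequality, which rearranges each factor separately and leaves the already-symmetric $w$ and $\mathcal{G}_\lambda$ untouched --- this is exactly Lemma \ref{lbl12}, applied with $N=5$, $n=2$. Second, in the direct method you must normalise the minimising sequence (the paper imposes $\int(w*v_n^2)\,v_n^2\,\ud x=1$): by the $0$-homogeneity of $\mathcal{W}^{(w)}_{\alpha,\lambda}$ an unnormalised minimising sequence may converge weakly to $0$, and it is precisely the compactness of the Hartree functional along radial $H^1_\alpha$-bounded sequences that shows the constraint survives in the limit, whence $v_\infty\neq0$. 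Your sketch omits both the normalisation and the non-vanishing argument.

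Third, the rigidity ``equality in P\'olya--Szeg\H{o} implies $|f_\alpha|$ is a translate of $|f_\alpha|^*$'' (Brothers--Ziemer, Burchard--Ferone) holds only under a measure-zero condition on the critical set of $|f_\alpha|^*$; equality alone tolerates plateaus, so strict monotonicity cannot be ``recovered from the equality case''. The paper first proves, from the Euler--Lagrange equation satisfied by the \emph{symmetrised} minimiser and the strict radial decrease of $\mathcal{G}_\lambda$ (which requires $|f_\alpha|^*(0)>0$), that $|f_\alpha|^*$ has no plateaus, and only then invokes the rigidity theorem to transfer strictness and positivity back to $|f_\alpha|$; your ordering inverts this. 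A further minor slip: $w*|v_\alpha|^2\in L^\infty$ fails when $p=1$, since by Young it would require $v_\alpha\in L^\infty$, which the $\mathcal{G}_\lambda$-component forbids; the needed conclusion $(w*|v_\alpha|^2)v_\alpha\in L^2$ nevertheless holds by distributing the exponents as in \eqref{eq:L2Hartree}. With these repairs the proposal becomes the paper's proof.
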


Theorem \eqref{vs:10} implies that every minimiser $v_{\alpha}$ for \eqref{200} has a non-trivial singular (i.e., proportional to $\mathcal{G_{\lambda}}$) component, as immediately follows from the canonical representation \eqref{eq:canore} and the fact that $|f_{\alpha}|>0$. In particular, the ground states for $\alpha\in\R$ are different from the $H^2$-ground states of the classical unperturbed Hartree equation.

\begin{remark}
All the properties of the ground states $v_{\alpha}$ provided by Theorem \ref{vs:10} are transferred to the corresponding standing waves profiles $Q_{\alpha}:=\sqrt{\Lambda_{\alpha,\lambda}^{(w)}(v_{\alpha})}\,v_{\alpha}$. In particular, $Q_{\alpha}$ belongs to $H_{\alpha}^2(\mathbb{R}^2)$, it has a non-trivial singular component, and in view of \eqref{eq:op_dom}, \eqref{eq:opaction} and \eqref{t:20} its regular component $F_{\alpha}\in H^2(\R^2)$ satisfies
\begin{equation}\label{eq.imr3}
		-\Delta F_{\alpha}  + F_{\alpha}  = \left(w*\Big(F_{\alpha}  + \frac{F_{\alpha} (0)}{\beta_{\alpha} (\lambda)} \mathcal{G}_\lambda\Big)^2\right) \Big(F_{\alpha}  + \frac{F_{\alpha} (0)}{\beta_{\alpha} (\lambda)} \mathcal{G}_\lambda\Big)\,.
	\end{equation}
\end{remark}

The main difficulty in order to establish the existence of ground states is to choose a minimising sequence for \eqref{200} satisfying suitable symmetry properties which survive in the limit. In the classical case $\alpha=\infty$, thus with the ordinary, unperturbed Laplacian in \eqref{t:20}, it is the Schwartz symmetrisation that provides a final minimising sequence in $H^1(\mathbb{R}^2)$ consisting of spherically symmetric, positive, non-increasing functions. In fact (see, e.g., \cite{Kesavan-symm-2006}), the Schwartz symmetrisation $f\mapsto f^*$ preserves any $L^p$-norm,
   obeys the P\'{o}lya-Szeg\H{o} inequality
   \begin{equation}\label{eq:PSz}
    \|\nabla f^* \|_{L^2} \;\leqslant\; \|\nabla f \|_{L^2}\,,
   \end{equation}
 as well as the Riesz inequality
  \begin{equation}\label{eq.Rin1}
   \int_{\mathbb{R}^2}(w\ast|f|^2)|f|^2\,\ud x \;\leqslant\; \int_{\mathbb{R}^2}(w\ast|f^*|^2)|f^*|^2 \,\ud x\,,
   \end{equation}
  and precisely \eqref{eq.Rin1} is crucial in the construction of a minimising sequence for
  \begin{equation*}
\widetilde{\mathcal{W}}^{(w)}_{\alpha=\infty,\lambda}  \;=\; \inf_{  \substack{ v \in H^1 (\mathbb{R}^2) \\ v\neq 0}  } \mathcal{W}^{(w)}_{\alpha=\infty,\lambda}(v)\;=\;\inf_{  \substack{ v \in H^1 (\mathbb{R}^2) \\ v\neq 0}  }\frac{\|\nabla v\|_{L^2}^2 +\lambda \|v\|_{L^2}^2}{\;\displaystyle\left(\int_{\mathbb{R}^2}(w\ast|v|^2)|v|^2 \ud x\right)^{\!\frac{1}{2}}}
\end{equation*}
  with positive, radial and decreasing $H^1$-functions. At finite $\alpha$, instead, thus with the point-like singular perturbed Laplacian in \eqref{t:20}-\eqref{200}, the elements of the adapted Sobolev space $H^1_\alpha(\mathbb{R}^2)$ have the structure $f + c\,\mathcal{G}_1$ for $f\in H^1(\mathbb{R}^2)$ and $c \in \mathbb{C}$ (see \eqref{eq:H1alphastructure} above), and a generalisation of the Riesz inequality is needed of the form
  \begin{equation}\label{eq.newRiesz}
  \int_{\mathbb{R}^2}(w\ast|f+g|^2)|f+g|^2\, \ud x \;\leqslant\; \int_{\mathbb{R}^2}((w^*)\ast|f^*+g^*|^2)|f^*+g^*|^2\, \ud x
\end{equation}
 in order to perform the minimisation argument. We shall establish \eqref{eq.newRiesz} (Lemma \ref{lbl12}) based on a Brascamp-Friedberg-Lieb-Luttinger inequality.

In addition, an accurate analysis of the equality case in the P\'{o}lya-Szeg\H{o} inequality \eqref{eq:PSz}, together with the $H_{\alpha}^2$-regularity for $v_{\alpha}$ guaranteed by the Euler-Lagrange equation \eqref{t:221}, allows to show that \emph{every} ground state is (up to a constant phase) spherically symmetric, strictly positive, and strictly decreasing (Proposition \ref{pr:machi}), thereby concluding the proof of Theorem \ref{vs:10}.

 Our second main focus concerns the local and global well-posedness of \eqref{singular_hartree-theta} in the energy space. This field is under a comprehensive and well-established control in the classical, unperturbed case $\alpha=\infty$, including also when \eqref{singular_hartree-theta} contains much more singular and non-symmetric convolution potentials, together with electric and magnetic potentials, possibly depending on time (see, e.g., \cite{cazenave,M-2015-nonStrichartzHartree,AMS-2017-globalFinErgNLS} and the references therein).

 We shall associate to \eqref{singular_hartree-theta}, as customary, the mass and the energy of a solution at time $t\geqslant 0$, formally defined, respectively, as
 \begin{eqnarray}
 	\mathcal{M}(t)&:=&\|\psi(t,\cdot)\|_{L^2}^2\,, \label{def:mass} \\
 	\mathcal{E}(t)&:=&\frac{1}{2}(-\Delta_{\alpha})[\psi(t,\cdot)]+\frac{\theta}{4}\int_{\mathbb{R}^2}(w\ast|\psi(t,\cdot)|^2)(x)|\psi(t,x)|^2 \ud x\,. \label{def:energy}
 \end{eqnarray}
They are both formally conserved in time.

\begin{theorem}\label{th:main_ex}
	Let $\alpha\in\mathbb{R}$, $\theta=\pm 1$, and let $w$ satisfy \eqref{ass_w}. Then, for every $\psi_0\in H^1_{\alpha}(\mathbb{R}^2)$, there exist a maximal time $T_{\mathrm{max}}\equiv T_{\mathrm{max}}(\psi_0)\in(0,+\infty]$ and a unique (maximal) solution $\psi\in\mathcal{C}([0,T_{\mathrm{max}}),H^1_{\alpha}(\mathbb{R}^2))$ to the initial value problem
	\begin{equation}\label{cp:shmain}
	\begin{cases}
		\:\ii\partial_t\psi\,=\,-\Delta_{\alpha}\psi +\theta(w\ast|\psi|^2)\psi\,,\\
		\:\psi(0,\cdot)\,=\,\psi_0\,.
	\end{cases}
\end{equation}
 Moreover:
\begin{itemize}
	\item[(i)] (blow-up alternative) if $T_{\mathrm{max}}<+\infty$, then
	$$\lim_{t\uparrow T_{\mathrm{max}}}\|\psi(t)\|_{H_{\alpha}^1}\,=\,+\infty\,;$$
	\item[(ii)] mass $\mathcal{M}(t)$ and energy $\mathcal{E}(t)$ of $\psi$, as defined in \eqref{def:mass}-\eqref{def:energy}, are conserved, i.e, $\mathcal{M}(t)=\mathcal{M}(0)$ and $\mathcal{E}(t)=\mathcal{E}(0)$ $\forall\,t\in(0,T_{\mathrm{max}})$\,;
	\item[(iii)] (continuous dependence on initial data) if, for a sequence $(\psi_0^{(n)})_n$ in $H_{\alpha}^1(\mathbb{R}^2)$, one has $\psi_0^{(n)}\to \psi_0$ in $H_{\alpha}^1(\mathbb{R}^2)$, and if $T\in(0,T_{\mathrm{max}}(\psi_0))$, then, eventually in $n$, the maximal solution $\psi^{(n)}$ to \eqref{singular_hartree-theta} with initial datum $\psi_0^{(n)}$ is defined on $[0,T]$ and satisfies $\psi^{(n)}\to\psi$ in $\mathcal{C}([0,T],H_{\alpha}^1(\mathbb{R}^2))$.
\end{itemize}
 The above solution $\psi$ to \eqref{cp:shmain} is global-in-time (i.e., $T_{\mathrm{max}}=+\infty$) in the following cases:
\begin{itemize}
\item $\theta=1$ (defocusing case);
\item $\theta=-1$ and $p>1$ (focusing, mass sub-critical case);
\item $\theta=-1$ and $p=1$, provided that $\|\psi_0\|_{L^2} < \kappa$ (focusing and mass critical case, with small initial data), for a constant $\kappa$ only depending on $\alpha$ and $\|w\|_{L^{p_1}+L^{p_2}}$ (explicitly, $\sqrt{2}/\kappa$ is the optimal constant for the Gagliardo-Nirenberg type inequality \eqref{eq:gani} below, specialised for $p=1$).
\end{itemize}
In all such cases, $\displaystyle\sup_{t>0}\|\psi(t,\cdot)\|_{H^1_\alpha}<+\infty$.
\end{theorem}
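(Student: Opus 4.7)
The plan is to prove local well-posedness by a Banach fixed-point argument on the Duhamel reformulation
\begin{equation*}
\Phi(\psi)(t) \;:=\; e^{\ii t\Delta_{\alpha}}\psi_0 \;-\; \ii\theta\int_0^t e^{\ii(t-s)\Delta_{\alpha}}\bigl((w\ast|\psi(s)|^2)\psi(s)\bigr)\,\ud s
\end{equation*}
set in the Banach space $X_T:=\mathcal{C}([0,T],H^1_{\alpha}(\mathbb{R}^2))$. The key structural facts are that the unitary group $\{e^{\ii t\Delta_{\alpha}}\}_{t\in\mathbb{R}}$ is isometric on $L^2$ and, by the spectral theorem applied to $(-\Delta_{\alpha}-e_{\alpha}+\mathbbm{1})^{1/2}$, also on $H^1_\alpha$ and $H^{-1}_\alpha$, and that the Hartree non-linearity is energy sub-critical by \eqref{eq:esc}. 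In the mass sub-critical range $p>1$ the locally Lipschitz bound \eqref{llhn} in the energy space yields directly a contraction on a small ball of $X_T$ with $T=T(\|\psi_0\|_{H^1_\alpha})$; in the mass critical case $p=1$, where \eqref{llhn} is no longer available, I would couple the $H^{-1}_\alpha$ estimate \eqref{eq:esc} with the Strichartz and dispersive estimates for the pair $(-\Delta_\alpha,-\Delta)$ from \cite{CMY-2018-2Dwaveop,Yajima-2021-Lpbdd-delta-2D} and close the fixed point in a suitable Strichartz-type space intersected with $X_T$, along the lines of the energy-space local theory of \cite{cazenave}. Uniqueness is built into the contraction; iterating the local construction from any $t<T_{\mathrm{max}}$ yields the maximal time, and since the local length depends only on $\|\psi(t)\|_{H^1_\alpha}$, the blow-up alternative (i) follows.

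The conservation laws (ii) and the continuous dependence (iii) would then be obtained by approximation. For $\psi_0\in H^2_\alpha$ the associated solution belongs to $\mathcal{C}^1([0,T],L^2)\cap\mathcal{C}([0,T],H^2_\alpha)$, so the self-adjointness of $-\Delta_\alpha$ and the realness of $\theta(w\ast|\psi|^2)|\psi|^2$ give at once $\tfrac{\ud}{\ud t}\mathcal{M}=0$, and a direct computation using the equation yields $\tfrac{\ud}{\ud t}\mathcal{E}=0$. The density of $H^2_\alpha$ in $H^1_\alpha$ (see \eqref{eq:op_dom} and \eqref{eq:H1alphastructure}) together with (iii)---which itself follows immediately from the Lipschitz contraction estimate applied to differences of Duhamel iterates with nearby data---transfers both identities to arbitrary $\psi_0\in H^1_\alpha$.

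The final step is to upgrade to global existence by deriving, in each of the three cases, a uniform-in-time bound on $\|\psi(t)\|_{H^1_\alpha}$ from the conservation laws and \eqref{eq:energynormH1alpha-bis}: the blow-up alternative then rules out finite $T_{\mathrm{max}}$. For $\theta=+1$ the Hartree term in $\mathcal{E}$ is non-negative, whence $(-\Delta_\alpha)[\psi(t)]\leqslant 2\mathcal{E}(0)$ and \eqref{eq:energynormH1alpha-bis} closes the estimate. For $\theta=-1$ with $p>1$, inequality \eqref{eq:gani-pre} bounds the non-linear part of $\mathcal{E}$ by a constant times $\|\psi\|_{H^1_\alpha}^{2/p}\mathcal{M}(0)^{2-1/p}$ with exponent $2/p<2$, so Young's inequality absorbs it into the kinetic term. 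In the critical case $p=1$ the exponent is exactly $2$ and the same absorption works only provided $\mathcal{M}(0)<\kappa^2$, the explicit threshold being tied to $C_{\textsc{gn}}$. The most delicate point of the whole program is precisely the mass critical local theory, where the loss of direct Lipschitz control in $H^1_\alpha$ makes the use of Strichartz estimates for $-\Delta_\alpha$---and therefore of the recently established $L^p$-boundedness of the associated wave operator---essentially unavoidable.
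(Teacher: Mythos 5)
Your global-in-time arguments (Section-7-style: positivity of the Hartree term for $\theta=1$, absorption via \eqref{eq:gani} with exponent $2/p<2$ for $p>1$, and the smallness threshold $\mathcal{M}(0)<2C_{\textsc{gn}}^{-1}$ for $p=1$) are correct and coincide with the paper's. Likewise, for $p>1$ the contraction in $\mathcal{C}([0,T],H^1_\alpha)$ based on the locally Lipschitz bound \eqref{llhn} is legitimate and is essentially what the paper does in its ``enhanced'' local theory (Proposition \ref{pr:luco}).

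The genuine gap is in your mass critical local theory. Closing a fixed point in $\mathcal{C}([0,T],H^1_\alpha)$ via Duhamel requires the non-linearity to map back into $H^1_\alpha$ with some quantitative control: the propagator $e^{\ii t\Delta_\alpha}$ is merely unitary on each $H^s_\alpha$ and does not smooth, so the energy sub-criticality estimate \eqref{eq:esc}, which lands in $H^{-1}_\alpha$, loses two derivatives that Duhamel cannot recover. Your proposed remedy --- Kato's Strichartz-space method in the energy space --- needs \emph{derivative} Strichartz estimates, i.e.\ control of $(1-\Delta_\alpha)^{1/2}\psi$ in mixed $L^r_tL^q_x$ norms, which amounts to a characterisation of the adapted spaces $W^{1,r}_\alpha(\mathbb{R}^2)$ for $r\neq 2$; as the paper points out explicitly (Remark after Proposition \ref{pr:local_wp}), this characterisation is currently \emph{unknown}, so the contraction cannot be closed this way. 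The paper instead proves local well-posedness for all $p\in[1,\infty)$ by the compactness/energy method of Okazawa--Suzuki--Yokota (Proposition \ref{pr:ass}), which only requires the Lipschitz property of $\psi\mapsto(w*|\psi|^2)\psi$ as a map $H^1_\alpha\to H^{-1}_\alpha$ plus the structural conditions \textbf{(G2)}--\textbf{(G5)}, with uniqueness \textbf{(G6)} supplied separately by $L^2$-level Strichartz estimates (Lemma \ref{le:uni}). Two consequences of this for your write-up: (a) in the critical case, continuous dependence (iii) cannot be read off ``immediately'' from a contraction estimate that does not exist --- the energy method yields only continuity, not Lipschitz dependence, of the solution map; (b) your derivation of the conservation laws by approximating with $H^2_\alpha$ data presupposes a local theory at the $H^2_\alpha$ level (persistence of regularity), which is an additional unproved ingredient; in the paper, conservation of mass and energy comes for free from the abstract theorem.
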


 The material announced so far is discussed in Sections \ref{sec:3D-delta}-\ref{sec:radial}, concerning the characterisation of ground states and the proof of Theorem \ref{vs:10}, and in Sections \ref{sec:LWP}-\ref{sec:GWP}, concerning the local and global well-posedness of \eqref{cp:shmain} and the proof of Theorem \ref{th:main_ex}.

 Notation-wise, we adopt throughout the convention that the $L^2$-product $\langle\cdot,\cdot\rangle_{L^2}$ is linear in the first entry and anti-linear in the second, we write $\langle x\rangle$ for $\sqrt{1+x^2}$, $x\in\mathbb{R}$, we use the short-hand $L_T^p\mathcal{X}$ and $W_T^{s,p}\mathcal{X}$ for the spaces $L^p([0,T],\mathcal{X})$ and $W^{s,p}(I,\mathcal{X})$ for some Banach space $\mathcal{X}$ and $T>0$, and we write $H^s(\mathbb{R}^d)$ for the Sobolev space $W^{s,2}(\mathbb{R}^d)$. By $A\lesssim B$, with $A,B>0$, we mean as customary that $A\leqslant CB$ for some constant $C$, and $A\lesssim_\kappa B$ indicates that $C$ depends on the parameter $\kappa$. $A\approx B$ stands for $A\lesssim B$ and $B\lesssim A$, and $A\approx_\kappa B$ has an obvious analogous meaning. The dual exponent $p/(p-1)$ of the index $p\in[1,+\infty]$ is denoted as usual by $p'$. In identities $f=g$ between measurable functions the `for almost every $x$' declaration is tacitly understood. The convention for the Fourier transform shall be $\widehat{\varphi}(\xi)=(2\pi)^{-\frac{d}{2}}\int_{\mathbb{R}^d}e^{-\ii \xi x}\varphi(x)\,\ud x$\,.

\section{Interpolation and dispersive properties of $-\Delta_\alpha$}\label{sec:3D-delta}

  It is beneficial to exploit certain interpolation and dispersive properties of the two-dimensional operator $-\Delta_\alpha$ in order to single out relevant facts for the forthcoming analysis: the embedding of the energy space into $L^p$-spaces (Lemma \ref{lem:H1alpha-embedding}), the characterisation of the `singular perturbed' Sobolev space $H^s_\alpha(\mathbb{R}^2)$ for intermediate $s\in(0,1)$ (Proposition \ref{main_fd}), a Gagliardo-Nirenberg type inequality adapted to $H^1_\alpha(\mathbb{R}^2)$ (Corollary \ref{cor:GN-ineq}), and dispersive and Strichartz estimates adapted to $-\Delta_\alpha$ (Proposition \ref{pr:did} and Corollary \ref{cor:disp-strich}).



 We start with the Green function $\mathcal{G}_{\omega}$ (see \eqref{eq:Gomega-intro} above). From the two-dimensional distribution identity $(\omega-\Delta)\mathcal{G}_{\omega}=\delta$ one easily finds
 \begin{equation}\label{eq.i1}
   \mathcal{G}_{\omega}(x) \;=\;   \frac{\ii}4 H_{0}^{(1)}(\ii |x|\sqrt{\omega}) \;=\; \frac{1}{2\pi}\, K_{0}(|x|\sqrt{\omega})\qquad (\omega>0)\,,
\end{equation}
where $H_0^{(1)}$ and $K_0$ are, respectively, the Hankel function and the modified Bessel function of zero-th order \cite[Chapter 9]{Abramowitz-Stegun-1964}. $\mathcal{G}_{\omega}$ is smooth, spherically symmetric, and also strictly monotone decreasing with $|x|$. Standard asymptotic expansions \cite[Eq.~(9.6.10), (9.6.13), and (9.7.2)]{Abramowitz-Stegun-1964} yield
\begin{equation}\label{eq.ppr1m}
   \mathcal{G}_{\omega}(x)\stackrel{|x|\to 0}{=}  \frac{1}{2\pi} \Big( -\log\Big(\frac{|x|\sqrt{\omega}}{2}\Big) -\gamma  \Big)  + o(1)\,,
\end{equation}
\begin{equation}\label{eq:expdecay}
   \mathcal{G}_{\omega}(x)\stackrel{|x|\to +\infty}{=} \frac{1}{\sqrt{8\pi\sqrt{w}\,|x|\,}}\,e^{-|x|\sqrt{\omega}}(1+O(|x|^{-1})\,.
\end{equation}
 By introducing $\mathcal{G}_0$ through $-\Delta\mathcal{G}_0=\delta$, and hence $\mathcal{G}_0(x)=\frac{1}{2\pi}\log|x|$, \eqref{eq.ppr1m} reads
 \begin{equation}
   \mathcal{G}_{\omega}(x)\stackrel{|x|\to 0}{=} \mathcal{G}_{0}(x) - \beta_0(\omega) + o(1)\,.
 \end{equation}

\begin{lemma}\label{lem:gom1}
For any $ \omega >0$ one has
\begin{eqnarray}
 \mathcal{G}_{\omega} \!\!&\in& \!\! L^p(\mathbb{R}^2)\qquad \forall p \in [1,\infty)\,, \label{eq.fsp1} \\
\mathcal{G}_{\omega} \!\!&\in& \!\! H^s(\mathbb{R}^2)\qquad \forall s \in [0,1)\,, \label{eq.fsp2}
\end{eqnarray}
and for any $\omega_0>0$ and $s\in[0,1)$ there exists $C_{\omega_0,s}>0$ such that
\begin{equation}\label{eq.fsp2a}
    \|\mathcal{G}_{\omega}\|_{H^s}\;\leqslant\; C_{\omega_0,s}\, \omega^{\frac{s}{2}-\frac{1}{2}} \qquad \forall\, \omega>\omega_0\,.
\end{equation}
As $s\uparrow 1$, $C_{\omega_0,s}\to +\infty$.
\end{lemma}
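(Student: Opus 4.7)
My proof plan proceeds in two independent strands, one for the $L^p$ membership and one for the $H^s$ estimate.

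For the statement $\mathcal{G}_\omega \in L^p(\mathbb{R}^2)$, $p\in[1,\infty)$, I would simply combine the two asymptotic expansions \eqref{eq.ppr1m} and \eqref{eq:expdecay} already recorded in the excerpt. Split $\int_{\mathbb{R}^2}|\mathcal{G}_\omega|^p\,\ud x$ into an integral over a small ball $B_\rho$ around the origin and its complement. On $B_\rho$, \eqref{eq.ppr1m} gives $|\mathcal{G}_\omega(x)|\lesssim 1+|\log|x||$, whose $p$-th power is locally integrable in two dimensions for every finite $p$. On $\mathbb{R}^2\setminus B_\rho$, \eqref{eq:expdecay} yields $|\mathcal{G}_\omega(x)|\lesssim |x|^{-1/2}e^{-|x|\sqrt{\omega}}$, whose $p$-th power is integrable by the exponential factor. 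This proves \eqref{eq.fsp1}.

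For \eqref{eq.fsp2} and \eqref{eq.fsp2a}, I would work on the Fourier side. The definition \eqref{eq:Gomega-intro} together with the convention $\widehat{\varphi}(\xi)=(2\pi)^{-1}\int_{\mathbb{R}^2}e^{-\ii x\xi}\varphi(x)\,\ud x$ (in $d=2$) identifies
\[
\widehat{\mathcal{G}_\omega}(\xi)\;=\;\frac{1}{2\pi}\,\frac{1}{\omega+|\xi|^2}\,,
\]
so that
\[
\|\mathcal{G}_\omega\|_{H^s}^2 \;=\;\frac{1}{(2\pi)^2}\int_{\mathbb{R}^2}\frac{(1+|\xi|^2)^s}{(\omega+|\xi|^2)^2}\,\ud\xi.
\]
I would then rescale with $\xi=\sqrt{\omega}\,\eta$ to obtain
\[
\|\mathcal{G}_\omega\|_{H^s}^2 \;=\;\frac{1}{(2\pi)^2\,\omega}\int_{\mathbb{R}^2}\frac{(1+\omega|\eta|^2)^s}{(1+|\eta|^2)^2}\,\ud\eta,
\]
and exploit the elementary bound $1+\omega|\eta|^2\leqslant (1+\omega)(1+|\eta|^2)$ to get
\[
\|\mathcal{G}_\omega\|_{H^s}^2 \;\leqslant\;\frac{(1+\omega)^s}{(2\pi)^2\,\omega}\,I_s\,,\qquad I_s:=\int_{\mathbb{R}^2}(1+|\eta|^2)^{s-2}\,\ud\eta.
\]
A polar-coordinate calculation yields $I_s=\pi/(1-s)$ for $s\in[0,1)$, which is finite exactly in that range; this simultaneously proves \eqref{eq.fsp2} and exhibits the blow-up of the constant as $s\uparrow 1$.

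Finally, for the explicit decay \eqref{eq.fsp2a}, I would absorb the $(1+\omega)^s/\omega$ factor into $\omega^{s-1}$. Distinguishing the cases $\omega\geqslant 1$ (where $(1+\omega)^s\leqslant 2^s\omega^s$) and $\omega_0\leqslant\omega<1$ (where $(1+\omega)^s\leqslant 2^s$ and $\omega^{-1}=\omega^{s-1}\omega^{-s}\leqslant\omega_0^{-s}\omega^{s-1}$), one obtains
\[
\frac{(1+\omega)^s}{\omega}\;\leqslant\;C'_{\omega_0,s}\,\omega^{s-1}\qquad\forall\,\omega\geqslant\omega_0\,,
\]
and therefore $\|\mathcal{G}_\omega\|_{H^s}\leqslant C_{\omega_0,s}\,\omega^{s/2-1/2}$ with $C_{\omega_0,s}\to+\infty$ as $s\uparrow 1$ through the $(1-s)^{-1/2}$ factor coming from $I_s$. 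No step is a serious obstacle: the only delicate point is making sure that the $\omega$-uniformity in \eqref{eq.fsp2a} is obtained with the right power $\omega^{s-1}$, which is why the rescaling $\xi=\sqrt{\omega}\eta$ at the very beginning is important.
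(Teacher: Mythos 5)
Your proposal is correct and follows essentially the same route as the paper: the $L^p$ membership from the logarithmic singularity at the origin plus exponential decay at infinity, and the $H^s$ bound from the explicit Fourier representation $\widehat{\mathcal{G}_\omega}(\xi)=(2\pi)^{-1}(\omega+|\xi|^2)^{-1}$ followed by the rescaling $\xi=\sqrt{\omega}\,\eta$. You merely spell out the details of the ``obvious re-scaling'' that the paper leaves implicit, including the $(1-s)^{-1/2}$ blow-up of the constant, and all the computations check out.
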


\begin{proof}
\eqref{eq.fsp1} follows from the smoothness of $\mathcal{G}_\omega$ and from \eqref{eq.ppr1m}-\eqref{eq:expdecay}, whereas \eqref{eq.fsp2} follows from
\[
 \begin{split}
   \|\mathcal{G}_{\omega}\|_{H^s}^2\;&=\;\int_{\mathbb{R}^2}(1+|\xi|^2)^s |\widehat{\mathcal{G}_{\omega}}(\xi)|^2\,\ud \xi \\
   &=\;\frac{1}{4\pi^2}\int_{\mathbb{R}^2}\frac{(1+|\xi|^2)^s}{(\omega+|\xi|^2)^2}\,\ud \xi\;<\;+\infty\qquad\forall s\in [0,1)\,.
 \end{split}
\]
 An obvious re-scaling in $\|\mathcal{G}_{\omega}\|_{H^s}^2\leqslant\int_{\mathbb{R}^2}\frac{(1+|\xi|^2)^s}{(\omega+|\xi|^2)^2}\,\ud \xi$ then yields \eqref{eq.fsp2a}.
%
%
%
%
%
\end{proof}

 The energy space $H^1_\alpha(\mathbb{R}^2)$ satisfies useful embedding properties.

 \begin{lemma}\label{lem:H1alpha-embedding}
  For $\alpha\in\mathbb{R}$ one has the continuous embeddings
  \begin{eqnarray}
   H^1(\mathbb{R}^2) \!\! &\hookrightarrow& \!\! H^1_{\alpha} (\mathbb{R}^2)\,, \label{eq.ene2} \\
   H^1_{\alpha} (\mathbb{R}^2) \!\! &\hookrightarrow& \!\! L^p(\mathbb{R}^2) \qquad \forall p\in[2,\infty)\,. \label{eq.sob1}
  \end{eqnarray}
  Moreover, one has the compact embedding
  \begin{equation}\label{eq.cse1}
		H^1_{\alpha,\mathrm{rad}}(\mathbb{R}^2) \subset\subset L^{q}(\mathbb{R}^2)\qquad \forall q\in(2,\infty)\,,
	\end{equation}
  where $H^1_{\alpha,\mathrm{rad}}(\mathbb{R}^2)$ is the subspace of spherically symmetric $H^1_{\alpha}$-functions.
 \end{lemma}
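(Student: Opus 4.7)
The plan is to reduce each claim to a property of ordinary $H^1$-functions by exploiting the direct sum decomposition $H^1_\alpha(\mathbb{R}^2)=H^1(\mathbb{R}^2)\dotplus\mathrm{span}\{\mathcal{G}_\omega\}$ from \eqref{eq:H1alphastructure} (for any fixed $\omega>0$), the norm equivalence \eqref{eq:equiv-norm}, and the $L^p$-integrability of $\mathcal{G}_\omega$ guaranteed by \eqref{eq.fsp1}.

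For \eqref{eq.ene2}, I observe that any $f\in H^1(\mathbb{R}^2)$ admits the trivial decomposition $f=f+0\cdot\mathcal{G}_\omega$ in $H^1_\alpha(\mathbb{R}^2)$, whence \eqref{eq:equiv-norm} immediately gives $\|f\|_{H^1_\alpha}\approx_\omega\|f\|_{H^1}$, yielding the continuous embedding. For \eqref{eq.sob1}, given $g\in H^1_\alpha(\mathbb{R}^2)$ I would write $g=f+c\mathcal{G}_\omega$ with $f\in H^1(\mathbb{R}^2)$ and $c\in\mathbb{C}$, and combine the triangle inequality with the classical two-dimensional Sobolev embedding $H^1(\mathbb{R}^2)\hookrightarrow L^p(\mathbb{R}^2)$, valid for every $p\in[2,\infty)$, and with \eqref{eq.fsp1}, to obtain
\[
  \|g\|_{L^p}\;\leqslant\;\|f\|_{L^p}+|c|\,\|\mathcal{G}_\omega\|_{L^p}\;\lesssim_{\omega,p}\;\|f\|_{H^1}+|c|\;\approx_\omega\;\|g\|_{H^1_\alpha}\,.
\]

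For the compact embedding \eqref{eq.cse1} I would consider a bounded sequence $(g_n)_n$ in $H^1_{\alpha,\mathrm{rad}}(\mathbb{R}^2)$ and decompose $g_n=f_n+c_n\mathcal{G}_\omega$. Since $\mathcal{G}_\omega$ is spherically symmetric (cf.~\eqref{eq.i1}), so is $f_n=g_n-c_n\mathcal{G}_\omega$, and by \eqref{eq:equiv-norm} both $(f_n)_n$ and $(c_n)_n$ are bounded, respectively, in $H^1_{\mathrm{rad}}(\mathbb{R}^2)$ and in $\mathbb{C}$. Invoking the classical Strauss radial compactness result $H^1_{\mathrm{rad}}(\mathbb{R}^2)\subset\subset L^q(\mathbb{R}^2)$ for $q\in(2,\infty)$, one may extract a subsequence along which $f_n\to f$ in $L^q(\mathbb{R}^2)$, and by Bolzano-Weierstrass, along a further subsequence, $c_n\to c$ in $\mathbb{C}$. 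Since $\mathcal{G}_\omega\in L^q(\mathbb{R}^2)$ by \eqref{eq.fsp1}, one concludes that $g_n\to f+c\mathcal{G}_\omega$ in $L^q(\mathbb{R}^2)$, which is precisely the desired compactness.

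The only non-routine step is \eqref{eq.cse1}, and the key observation enabling it is the spherical symmetry of $\mathcal{G}_\omega$: this ensures that the decomposition \eqref{eq:H1alphastructure} restricts correctly to the radial subspace, i.e., a radial $g\in H^1_\alpha$ splits as a radial $H^1$-function plus a multiple of the radial singular mode $\mathcal{G}_\omega$. Without this compatibility one could not separate the extraction into a radial-Strauss argument on the regular part and an elementary compactness argument on the scalar coefficient.
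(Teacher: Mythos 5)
Your proof is correct and follows essentially the same route as the paper's: the trivial decomposition plus norm equivalence \eqref{eq:equiv-norm} for \eqref{eq.ene2}, the triangle inequality with the classical Sobolev embedding and \eqref{eq.fsp1} for \eqref{eq.sob1}, and the splitting of a bounded radial sequence into a radial $H^1$-part (handled by the Strauss lemma) and a bounded scalar coefficient for \eqref{eq.cse1}. Your explicit remark that the radial symmetry of $\mathcal{G}_\omega$ makes the decomposition compatible with the radial subspace is exactly the observation the paper relies on as well.
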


 \begin{proof}
  \eqref{eq.ene2} follows from the canonical decomposition \eqref{eq:H1alphastructure} and the norm equivalence \eqref{eq:equiv-norm}. Moreover, for a generic $g\in H_{\alpha}^1$ and for $\omega>0$ there are $f_\omega\in H^1(\mathbb{R}^2)$ and $c_\omega\in\mathbb{C}$ so that  $g=f_\omega+c_\omega\mathcal{G}_{\omega}\in H_{\alpha}^1(\mathbb{R}^2)$; owing to \eqref{eq.fsp1}, to the Sobolev embedding $H^1(\mathbb{R}^2)\hookrightarrow L^p(\mathbb{R}^2)$, and to the norm equivalence \eqref{eq:equiv-norm}, one then finds
  \[
   \|g\|_{L^p}\;\lesssim_\omega\;\|f_\omega\|_{L^p}+|c_\omega|\;\lesssim\;\|f_\omega\|_{H^1}+|c_\omega|\;\approx_\omega\,\|g\|_{H^1_\alpha}\,,
  \]
 which proves \eqref{eq.sob1}. Concerning \eqref{eq.cse1}, on account of the canonical decomposition \eqref{eq:H1alphastructure} and the norm equivalence \eqref{eq:equiv-norm}, as well as of the radial symmetry of $\mathcal{G}_\omega$, a generic $H^1_{\alpha}$-bounded sequence $(g_n)_{n\in\mathbb{N}}$ in $H^1_{\alpha,\mathrm{rad}}(\mathbb{R}^2)$ has the form (see \eqref{eq:H1alphastructure} above) $g_n=f_n+c_n\mathcal{G}_1$, where $(f_n)_{n\in\mathbb{N}}$ is a bounded sequence in $H^1_{\mathrm{rad}}(\mathbb{R}^2)$ and $(c_n)_{n\in\mathbb{N}}$ is bounded in $\mathbb{C}$. Owing to the Strauss radial lemma \cite[Lemma 1]{Strauss-CMP1977},
 \[
  H^1_{\mathrm{rad}}(\mathbb{R}^2)\;\subset\subset\; L^{q}(\mathbb{R}^2)\qquad \forall q \in (2,\infty)\,.
 \]
 Thus, up to subsequence, $\|f_n-f_\circ\|_{L^q}\to 0$ and $|c_n\to c_\circ|\to 0$ as $n\to\infty$ for some $f_\circ\in L^q(\mathbb{R}^2)$ and $c_\circ\in\mathbb{C}$, and for arbitrary $q\in(2,\infty)$. This, together with the integrability \eqref{eq.fsp1} of $\mathcal{G}_1$ and again the norm equivalence \eqref{eq:equiv-norm}, then imply that $\|g_n-g_\circ\|_{H^1_{\alpha}}\to 0$ for the corresponding extracted subsequence, where $g_\circ:=f_\circ+c_\circ\mathcal{G}_1$. Having already established the continuous embedding $ H^1_{\alpha} (\mathbb{R}^2) \hookrightarrow L^q(\mathbb{R}^2) $, one then concludes \eqref{eq.cse1}.
 \end{proof}

  Next we consider the Sobolev space $H^s_\alpha(\mathbb{R}^2)$ for intermediate $s\in [0,1)$. In fact, a standard application of the spectral theorem allows to define the fractional power $(\omega-\Delta_\alpha)^{\frac{s}{2}}$ for arbitrary $s\in\mathbb{R}$, whenever $\omega>|e_\alpha|$, as well as the associated singular perturbed Sobolev space
    \begin{equation}\label{eq.defss1}
    H^s_{\alpha} (\mathbb{R}^2) \;:=\;\big\{(\omega-\Delta_\alpha)^{-\frac{s}{2}}h\,\big|\,h\in L^2(\mathbb{R}^2)\big\}
    \end{equation}
  with norm
   \begin{equation}\label{eq.deffss2}
   \|g\|_{H^s_{\alpha}}\;:=\; \|(\omega - \Delta_{\alpha} )^{\frac{s}{2}} g\|_{L^2} \;=\; \|h\|_{L^2}\quad (g=(\omega-\Delta_\alpha)^{-\frac{s}{2}}h\in H^s_{\alpha} (\mathbb{R}^2))\,.
 \end{equation}
  The case $s=0$ corresponds to $L^2(\mathbb{R}^2)$, and for positive $s$ the space $H^{-s}_\alpha(\mathbb{R}^2)$ is the topological dual of $H^{s}_\alpha(\mathbb{R}^2)$. Clearly, the explicit representation \eqref{eq.defss1} for a generic element of $H^s_{\alpha} (\mathbb{R}^2)$, as well as the norm \eqref{eq.deffss2}, are $\omega$-dependent, all such norms being equivalent for the considered $\omega>|e_\alpha|$. In \eqref{eq:energynormH1alpha} the standard $H^1_\alpha$-norm was fixed by the choice $\omega=-e_\alpha+1$.
  Again based on the spectral theorem, one recognises $H^s_\alpha(\mathbb{R}^2)$, when $s\in (0,1)$, as an interpolation space between $L^2(\mathbb{R}^2)$ and $H^1_\alpha(\mathbb{R}^2)$.
  With the customary notation $[\cH_1,\cH_2]_\theta$, $\theta\in(0,1)$, for the interpolation space between two Hilbert spaces $\cH_1$ and $\cH_2$ both embedded continuously in a larger Hilbert space \cite{Bergh-Lofsrom_InterpolationSpaces1976}, one thus writes
  \begin{equation}\label{defss3}
    H^s_{\alpha} (\mathbb{R}^2) \;=\; \Big[L^2(\mathbb{R}^2),  H^1_{\alpha} (\mathbb{R}^2)  \Big]_\theta\,, \qquad \theta=s\,.
 \end{equation}
 This also provides the interpolation inequality
 \begin{equation}\label{eq.defss4}
    \|g\|_{H^s_{\alpha}} \;\leqslant\; \|g\|_{H^1_{\alpha}}^s \,\|g\|_{L^2}^{1-s}\qquad s\in(0,1)\,.
 \end{equation}

  What is less evident, instead, is the actual \emph{coincidence} of this singular perturbed Sobolev space and its classical counterpart.

\begin{proposition} \label{main_fd}
 Let $\alpha\in\mathbb{R}$. For any $ s \in [0,1)$ one has
\begin{equation}\label{eq.leqn1}
  H^s(\mathbb{R}^2) \;=\; H^s_{\alpha} (\mathbb{R}^2)\,.
\end{equation}
\end{proposition}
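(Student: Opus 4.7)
The plan is to prove the two inclusions $H^s(\mathbb{R}^2) \hookrightarrow H^s_\alpha(\mathbb{R}^2)$ and $H^s_\alpha(\mathbb{R}^2) \hookrightarrow H^s(\mathbb{R}^2)$ separately; the endpoint $s=0$ holds by definition.

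The first inclusion follows by interpolation. The identity is trivially isometric at the $L^2$ endpoint and continuous as a map $H^1(\mathbb{R}^2)\to H^1_\alpha(\mathbb{R}^2)$ by the embedding \eqref{eq.ene2}. Applying the interpolation functor with parameter $s$, together with the classical identification $H^s=[L^2,H^1]_s$ and the one at \eqref{defss3}, yields at once $H^s\hookrightarrow H^s_\alpha$.

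For the reverse inclusion the naive application of interpolation fails, because $H^1_\alpha$ strictly contains $H^1$: indeed $\mathcal{G}_\omega\in H^1_\alpha\setminus H^1$. The crucial observation to bypass this difficulty is that, by Lemma \ref{lem:gom1}, $\mathcal{G}_\omega\in H^s(\mathbb{R}^2)$ for every $s\in[0,1)$, so the extra direction distinguishing $H^1_\alpha$ from $H^1$ is nevertheless already absorbed into $H^s$ at every sub-endpoint regularity. I would exploit this via the resolvent formula: combining the Balakrishnan integral representation of a negative fractional power of a positive self-adjoint operator with the rank-one resolvent identity \eqref{eq:res_formula} gives the operator identity on $L^2$
\begin{equation*}
(\omega-\Delta_\alpha)^{-s/2}-(\omega-\Delta)^{-s/2}\;=\;\frac{\sin(\pi s/2)}{\pi}\int_0^\infty \tau^{-s/2}\,\frac{\langle\,\cdot\,,\mathcal{G}_{\omega+\tau}\rangle_{L^2}}{\beta_\alpha(\omega+\tau)}\,\mathcal{G}_{\omega+\tau}\,\ud\tau.
\end{equation*}
The task then reduces to showing that the right-hand side defines a bounded operator $L^2\to H^s$: since $(\omega-\Delta)^{-s/2}$ is an isometry $L^2\to H^s$, this forces $(\omega-\Delta_\alpha)^{-s/2}(L^2)\subseteq H^s$, i.e.\ $H^s_\alpha\hookrightarrow H^s$ with the desired norm control.

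I expect the main obstacle to be the quantitative $H^s$-estimate of the above correction. A direct Cauchy-Schwarz bound $|\langle h,\mathcal{G}_{\omega+\tau}\rangle|\leqslant\|h\|_{L^2}\|\mathcal{G}_{\omega+\tau}\|_{L^2}\lesssim\|h\|_{L^2}(\omega+\tau)^{-1/2}$, combined with the decay $\|\mathcal{G}_{\omega+\tau}\|_{H^s}\lesssim(\omega+\tau)^{(s-1)/2}$ given by \eqref{eq.fsp2a}, leads to a logarithmically divergent tail of the type $\int^\infty (\tau\log\tau)^{-1}\ud\tau$ and therefore does not close --- mirroring the marginal nature of the condition $s<1$. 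The correction must instead be handled bilinearly, viewing its squared $H^s$-norm as an integral kernel in $(\tau,\tau')$ whose off-diagonal behaviour encodes a partial cancellation between the rank-one summands at different scales; in radial Fourier coordinates this amounts to a weighted Stieltjes-type inequality with weight $\beta_\alpha(\omega+\tau)^{-1}\approx(\log\tau)^{-1}$ at infinity. A more structural alternative is to establish first the decomposition $H^s_\alpha=H^s(\mathbb{R}^2)+\mathbb{C}\mathcal{G}_\omega$ for all $s\in[0,1]$ by interpolating the sum map $T(f,c):=f+c\mathcal{G}_\omega$, which is bounded $L^2\oplus\mathbb{C}\to L^2$ and an isomorphism $H^1\oplus\mathbb{C}\to H^1_\alpha$ by \eqref{eq:H1alphastructure}--\eqref{eq:equiv-norm}; for $s<1$ the singular summand is then absorbed by Lemma \ref{lem:gom1}, giving $H^s_\alpha=H^s$.
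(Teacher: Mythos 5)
Your first inclusion $H^s(\mathbb{R}^2)\hookrightarrow H^s_\alpha(\mathbb{R}^2)$ is correct and coincides with the paper's argument (interpolate the identity using \eqref{eq.ene2} and \eqref{defss3}). The reverse inclusion is where all the content lies, and neither of your two routes closes it. The ``more structural alternative'' contains the more serious flaw: interpolating the sum map $T(f,c)=f+c\,\mathcal{G}_\omega$ between the couples $(L^2\oplus\mathbb{C},H^1\oplus\mathbb{C})$ and $(L^2,H^1_\alpha)$ only yields that $T$ maps $H^s\oplus\mathbb{C}$ boundedly \emph{into} $H^s_\alpha$, i.e.\ $H^s+\mathbb{C}\mathcal{G}_\omega\subseteq H^s_\alpha$ --- a restatement of the easy inclusion, not the one you need. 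To deduce $H^s_\alpha\subseteq H^s+\mathbb{C}\mathcal{G}_\omega$ you would have to interpolate $T^{-1}$ (equivalently, exhibit a coretraction bounded at both endpoints), and this fails at the $L^2$ endpoint: since $\mathcal{G}_\omega\in L^2$, the map $T:L^2\oplus\mathbb{C}\to L^2$ is not injective, and the coefficient functional $f+c\,\mathcal{G}_\omega\mapsto c$ of the decomposition \eqref{eq:H1alphastructure} is not $L^2$-bounded (take $f_n\in H^1$ with $f_n\to-\mathcal{G}_\omega$ in $L^2$ and $c_n=1$). Interpolating a bounded non-surjective map says nothing about the range of the interpolated map. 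This is exactly the obstruction the paper's proof is designed to circumvent: given $v=F(s)$ with $F\in\mathscr{F}(L^2,H^1_\alpha)$, one decomposes only the boundary trace $F(1+\ii t)=f_\omega(t)+c_\omega(t)\mathcal{G}_\omega$ --- where the decomposition \emph{is} bounded by \eqref{eq:equiv-norm} --- solves two auxiliary boundary value problems on the strip to produce $\Phi=\varphi+\eta\,\mathcal{G}_\omega$ with the same boundary data as $F$, and uses uniqueness to conclude $F\equiv\Phi$, whence $v=\varphi(s)+\eta(s)\mathcal{G}_\omega\in H^s$ by \eqref{eq.fsp2}.

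Your Balakrishnan route is, in principle, a viable and genuinely different path: the integral representation is correct given \eqref{eq:res_formula} (choosing $\omega>|e_\alpha|$ so that $\beta_\alpha(\omega+\tau)$ never vanishes), and the target bound $\|R\|_{L^2\to H^s}<\infty$ for the rank-one correction $R$ is true. But you stop precisely at the decisive step: after correctly observing that the triangle-inequality bound diverges like $\int^\infty(\tau\log\tau)^{-1}\,\ud\tau$, you only describe qualitatively (``handled bilinearly'', ``weighted Stieltjes-type inequality'') how to recover. What is actually needed is the off-diagonal estimate $\langle\mathcal{G}_{\omega+\tau},\mathcal{G}_{\omega+\tau'}\rangle_{H^s}\lesssim(\max\{\tau,\tau'\})^{s-1}$ for $s\in(0,1)$, which --- inserted into the expanded square of $\|Rh\|_{H^s}$ together with $|\langle h,\mathcal{G}_{\omega+\tau}\rangle|\lesssim\|h\|_{L^2}(\omega+\tau)^{-1/2}$ and the weight $\beta_\alpha(\omega+\tau)^{-1}\approx(\log\tau)^{-1}$ --- produces a double integral whose tail is $\int^\infty\tau'^{-1}(\log\tau')^{-2}\,\ud\tau'<\infty$; it is the \emph{square} of the logarithm that rescues the endpoint, and this computation is nowhere carried out in your proposal. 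As written, the hard inclusion therefore remains unproved.
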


\begin{proof}
From the interpolation characterisation \eqref{defss3} of $H_{\alpha} ^s(\mathbb{R}^2)$ and the embedding \eqref{eq.ene2} we deduce $H^s(\mathbb{R}^2) \hookrightarrow H^s_{\alpha} (\mathbb{R}^2)$.
So it remains to check the opposite inclusion
\begin{equation*}\tag{*}\label{eq.leqn3}
     H^s_{\alpha} (\mathbb{R}^2) \;\subset \;H^s(\mathbb{R}^2)
\end{equation*}
 in the non-trivial case $s\in(0,1)$.

 It is convenient to recall (see, e.g., \cite{Bergh-Lofsrom_InterpolationSpaces1976}) that the interpolation space $[\cH_1,\cH_2]_\theta$ consists of those elements $h\in\cH_1\dotplus\cH_2$ (as a direct sum of Hilbert spaces) such that $h=F(\theta)$ for some $F\in\mathscr{F}(\mathcal{H}_1,\mathcal{H}_2)$, where the latter is the space of all bounded and continuous $(\cH_1\dotplus\cH_2)$-valued functions $F$ on the complex strip
 \[
  \mathcal{S}\;:=\;\{z\in\mathbb{C}\,|\,0 \leqslant \mathfrak{Re}\, z \leqslant 1\}
 \]
 which are analytic on the \emph{open} strip $\mathring{ \mathcal{S}}$ and furthermore satisfy the property that the two restrictions of $F$ to the boundary of $\mathcal{S}$, namely the functions $t\mapsto F(\ii t)$ and $t\mapsto F(1+\ii t)$, are continuous from the real line to, respectively, $\cH_1$ or $\cH_2$, and vanish at infinity. Observe that $\mathscr{F}(\mathcal{H}_1,\mathcal{H}_2)$ is a Hilbert space with  norm
 \[
\|F\|_{\mathscr{F}}\;:=\;\max \Big\{\sup_{t\in\mathbb{R}} \|F(\ii t)\|_{\mathcal{H}_1}, \sup_{t\in\mathbb{R}} \|F(1+\ii t)\|_{\mathcal{H}_2}\Big\}\,.
 \]

 In the present case (see \eqref{defss3} above), any  $v \in H^s_{\alpha} (\mathbb{R}^2)$, $s\in(0,1)$, has the form $v=F(s)$ for some $\mathscr{F}(L^2(\mathbb{R}^2), H^1_\alpha(\mathbb{R}^2))$. In particular, owing to the canonical decomposition \eqref{eq:H1alphastructure} for $H^1_\alpha(\mathbb{R}^2)$ and to the $H^1_\alpha$-norm expression \eqref{eq:equiv-norm},
 \[
  F(1+\ii t)\;=\;f_\omega(t)+c_\omega(t)\mathcal{G}_{\omega}
 \]
 for suitable
  \[
   \begin{split}
    f_\omega &\in C(\mathbb{R}_t;H^1(\mathbb{R}^2))\qquad\textrm{with}\quad f_\omega(t)\stackrel{|t|\to\infty}{=}o(1)\,, \\
    c_\omega &\in C(\mathbb{R}_t;\mathbb{C})\qquad\qquad\;\;\textrm{with}\quad c_\omega(t)\stackrel{|t|\to\infty}{=}o(1)\,.
   \end{split}
  \]

  In order to show now that actually $v\in H^s(\mathbb{R}^2)$, let us consider two auxiliary boundary value problems for the Laplace equation in the strip $\mathcal{S}$ (recall that a function $\varphi\in C^2(U,\mathbb{C})$ is \emph{analytic} in the open domain $U \subset \mathbb{C}$ if and only if it satisfies the Laplace equation $(\partial_x^2\varphi)(z)+(\partial_y^2\varphi)(z)=0$ for $z\in U$, where $x=\mathfrak{Re}\,z$ and $y=\mathfrak{Im}\,z$).
  First, we consider the unique solution $\varphi$ to
  \[
   \begin{cases}
    \varphi\in\mathscr{F}(L^2(\mathbb{R}^2), H^1(\mathbb{R}^2))\,, \\
    \varphi(\ii t)=F(\ii t)\,,\quad \varphi(1+\ii t)=f_\omega(t)\quad \forall t\in \mathbb{R}
   \end{cases}
  \]
 (recall that both $F(\ii t)$ and $f_\omega(t)$ vanish as $|t|\to \infty$), then we consider the unique solution $\eta$ to
 \[
   \begin{cases}
    \eta\in\mathscr{F}(\mathbb{C})\,, \\
    \eta(\ii t)=0\,,\quad \eta(1+\ii t)=c_\omega(t)\quad \forall t\in \mathbb{R}\,.
   \end{cases}
  \]
  The interpolation identity $[L^2(\mathbb{R}^2),H^1(\mathbb{R}^2)]_s = H^s(\mathbb{R}^2)$ then implies $\varphi(s)\in H^s(\mathbb{R}^2)$ and therefore, setting
  \[
   \Phi(z)\;:=\;\varphi(z)+\eta(z)\mathcal{G}_\omega\,,\qquad z\in\mathcal{S}\,,
  \]
  $\Phi$ satisfies
    \[
   \begin{cases}
    \Phi\in\mathscr{F}(L^2(\mathbb{R}^2), H^1(\mathbb{R}^2))\subset \mathscr{F}(L^2(\mathbb{R}^2), H^1_\alpha(\mathbb{R}^2))\,, \\
    \Phi(\ii t)=\varphi(\ii t)=F(\ii t)\,, \\
    \Phi(1+\ii t)=f_\omega(t) + c_\omega(t)\mathcal{G}_\omega
   \end{cases}
  \]
   (observe that we used the inclusion \eqref{eq.ene2}). The solution to the latter problem is unique and is precisely $F$. From $\Phi\equiv F$ we then deduce, for $s\in(0,1)$,
   \[
    v\;=\;F(s)\;=\;\Phi(s)\;=\;\varphi(s)+\eta(s)\mathcal{G}_\omega\;\in\;H^s(\mathbb{R}^2)\,,
   \]
  having used in the last step both $\varphi(s)\in H^s(\mathbb{R}^2)$ and \eqref{eq.fsp2}. The claim is thus proved.
\end{proof}

 A direct useful consequence of Proposition \ref{main_fd} and the classical compact Sobolev embedding is the following.

\begin{corollary}\label{cpt_emb}
 Let $\alpha\in\mathbb{R}$. Fixed an open, bounded set $\Omega\subset\mathbb{R}^2$ and $p\in[1,\infty)$, the restriction map $f\mapsto f|_{\Omega}$ is compact from $H_{\alpha}^1(\R^2)$ to $L^p(\Omega)$.	
\end{corollary}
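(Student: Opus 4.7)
The plan is to reduce to classical Rellich--Kondrachov via the canonical decomposition of the energy space. Let $(g_n)_{n\in\mathbb{N}}$ be a bounded sequence in $H^1_\alpha(\mathbb{R}^2)$. Fixing any $\omega>0$, by \eqref{eq:H1alphastructure} one can write uniquely
$$g_n \;=\; f_n + c_n\,\mathcal{G}_\omega, \qquad f_n\in H^1(\mathbb{R}^2),\ c_n\in\mathbb{C},$$
and the norm equivalence \eqref{eq:equiv-norm} guarantees that $(f_n)_{n\in\mathbb{N}}$ is bounded in $H^1(\mathbb{R}^2)$ and $(c_n)_{n\in\mathbb{N}}$ is bounded in $\mathbb{C}$. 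So the compactness statement splits into a regular and a singular piece.

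For the regular piece, the classical Rellich--Kondrachov theorem asserts that the restriction map $H^1(\mathbb{R}^2)\to L^p(\Omega)$ is compact for every $p\in[1,\infty)$ (since $\Omega$ is bounded and, in dimension two, $H^1$ embeds locally into every $L^p$ with $p<\infty$). Thus, passing to a subsequence, there is $f_\circ\in L^p(\Omega)$ with $f_n|_\Omega\to f_\circ$ in $L^p(\Omega)$. For the singular piece, extract a further subsequence along which $c_n\to c_\circ$ in $\mathbb{C}$; since $\mathcal{G}_\omega\in L^p(\mathbb{R}^2)$ for every $p\in[1,\infty)$ by \eqref{eq.fsp1} of Lemma \ref{lem:gom1}, the function $\mathcal{G}_\omega|_\Omega$ is a fixed element of $L^p(\Omega)$, and hence $c_n\mathcal{G}_\omega|_\Omega\to c_\circ\mathcal{G}_\omega|_\Omega$ in $L^p(\Omega)$.

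Combining the two convergences gives $g_n|_\Omega\to f_\circ+c_\circ\mathcal{G}_\omega|_\Omega$ in $L^p(\Omega)$, establishing compactness of the restriction map. There is no real obstacle here: once the decomposition \eqref{eq:H1alphastructure} reduces the problem to the classical Sobolev setting plus a one-dimensional (scalar) direction, the local $L^p$-integrability of $\mathcal{G}_\omega$ absorbs the singular part without difficulty. (Alternatively, and more uniformly, one could invoke Proposition \ref{main_fd} together with the interpolation bound \eqref{eq.defss4} to conclude that any bounded sequence in $H^1_\alpha$ is bounded in $H^s(\mathbb{R}^2)$ for every $s\in[0,1)$, and then apply the classical compact embedding $H^s(\mathbb{R}^2)\hookrightarrow\hookrightarrow L^p(\Omega)$ for a suitable $s$ close enough to $1$; I would keep the decomposition argument since it is shorter and directly exposes which component produces the compactness.)
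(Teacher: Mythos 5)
Your proof is correct, but it follows a different route from the paper's. The paper derives this corollary in one line from Proposition \ref{main_fd}: a bounded sequence in $H^1_\alpha(\mathbb{R}^2)$ is, by the interpolation inequality \eqref{eq.defss4}, bounded in $H^s_\alpha(\mathbb{R}^2)=H^s(\mathbb{R}^2)$ for every $s\in[0,1)$, and the classical compact embedding $H^s(\mathbb{R}^2)\hookrightarrow\hookrightarrow L^p(\Omega)$ (with $s$ close enough to $1$ for the given $p$) finishes the job --- this is exactly the alternative you relegate to your parenthetical remark. Your primary argument instead splits $g_n=f_n+c_n\mathcal{G}_\omega$ via \eqref{eq:H1alphastructure}--\eqref{eq:equiv-norm}, applies Rellich--Kondrachov to the regular part (legitimately, since the restriction $H^1(\mathbb{R}^2)\to L^p(\Omega)$ factors through a ball containing $\Omega$, so no boundary regularity of $\Omega$ is needed), and observes that the singular part lives in a one-dimensional subspace with $\mathcal{G}_\omega\in L^p$ by \eqref{eq.fsp1}, hence contributes a finite-rank (automatically compact) piece. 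This is the same strategy the paper uses for the global compact embedding \eqref{eq.cse1} in Lemma \ref{lem:H1alpha-embedding}, just with the Strauss lemma replaced by local Rellich--Kondrachov. What each approach buys: yours is more elementary and self-contained, bypassing Proposition \ref{main_fd} entirely (whose proof via complex interpolation is the genuinely non-trivial input of the paper's route); the paper's version is an actual one-liner once that proposition is available and makes transparent that the compactness is really a statement about sub-unit regularity of $H^1_\alpha$-functions. Both are complete proofs.
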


 Proposition \ref{main_fd} also allows to obtain a useful inequality of Gagliardo-Nirenberg type.

\begin{corollary}\label{cor:GN-ineq}
 Let $\alpha\in\mathbb{R}$. For any $ g \in H^1_{\alpha} (\mathbb{R}^2)$ and $p\in[2,\infty)$, one has
 \begin{equation}
  \|g\|_{L^p}\;\lesssim\; \|g\|_{H_{\alpha}^1}^{1-\frac{2}{p}}\|g\|_{L^2}^{\frac{2}{p}}\,.
 \end{equation}
\end{corollary}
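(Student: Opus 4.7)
The plan is to bootstrap the inequality from two ingredients already in place: the interpolation bound \eqref{eq.defss4} between $L^2$ and $H^1_\alpha$, and the identification $H^s_\alpha(\mathbb{R}^2)=H^s(\mathbb{R}^2)$ from Proposition \ref{main_fd} (which, via the open mapping theorem applied to the two continuous inclusions established in that proof, upgrades to an equivalence of norms on each space $H^s_\alpha$ for $s\in[0,1)$). The classical Sobolev embedding in dimension two will then translate an $H^s_\alpha$-bound into an $L^p$-bound.

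Concretely, for $p\in[2,\infty)$ I choose the critical index $s=1-2/p$, which lies in $[0,1)$. The standard two-dimensional Sobolev embedding theorem yields
\[
\|g\|_{L^p}\;\lesssim\;\|g\|_{H^{s}}\qquad\forall g\in H^s(\mathbb{R}^2).
\]
Combining this with the norm equivalence $\|\cdot\|_{H^s}\approx\|\cdot\|_{H^s_\alpha}$ guaranteed by Proposition \ref{main_fd}, I obtain
\[
\|g\|_{L^p}\;\lesssim\;\|g\|_{H^s_\alpha}.
\]
Applying now the interpolation inequality \eqref{eq.defss4} with the same $s=1-2/p$ gives
\[
\|g\|_{H^s_\alpha}\;\leqslant\;\|g\|_{H^1_\alpha}^{s}\,\|g\|_{L^2}^{1-s}\;=\;\|g\|_{H^1_\alpha}^{1-\frac{2}{p}}\,\|g\|_{L^2}^{\frac{2}{p}},
\]
which is precisely the claimed inequality. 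The endpoint $p=2$ corresponds to $s=0$ and is trivial.

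There is no genuine obstacle: the work has been paid in advance by Proposition \ref{main_fd}. The only delicate point is to remark that the set-theoretic equality stated there is in fact an equivalence of norms; this is automatic since the proof exhibits continuous embeddings in both directions between two Hilbert spaces, so the open mapping theorem (or equivalently the closed graph theorem applied to the identity) delivers the two-sided bound needed to transfer the classical Sobolev inequality to the singular perturbed scale.
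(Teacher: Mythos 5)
Your proof is correct and follows essentially the same route as the paper: choose $s=1-\tfrac{2}{p}$, combine the Sobolev embedding $H^{s}(\mathbb{R}^2)\hookrightarrow L^p(\mathbb{R}^2)$ with the identification $H^s_\alpha(\mathbb{R}^2)=H^s(\mathbb{R}^2)$ of Proposition \ref{main_fd}, and conclude by the interpolation inequality \eqref{eq.defss4}. Your remark that the set equality upgrades to a norm equivalence via the closed graph theorem is a welcome explicit justification of the $\approx$ that the paper uses without comment.
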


\begin{proof}
 For the given $p\in[2,\infty)$ set $s\equiv s(p):=1-\frac{2}{p}\in[0,1)$. Using $H_{\alpha}^{s}(\mathbb{R}^2)= H^{s}(\mathbb{R}^2)$ (Proposition \ref{main_fd}) and the Sobolev embedding $H^{s}(\mathbb{R}^2)\hookrightarrow L^p(\mathbb{R}^2)$, we get
 \[
  \|g\|_{L^p}\;\lesssim\; \|g\|_{H^{s}} \;\approx\; \|g\|_{H_{\alpha} ^{s}}\,.
 \]
 The thesis then follows by interpolation, since $H_{\alpha}^{s}(\mathbb{R}^2)=[L^2(\mathbb{R}^2), H_{\alpha}^1(\mathbb{R}^2)]_{s}$.
\end{proof}

 Last, we turn to the dispersive properties of the linear propagator $(e^{\ii t\Delta_{\alpha} })_{t\in\mathbb{R}}$. In \cite{CMY-2018-2Dwaveop,Yajima-2021-Lpbdd-delta-2D} the $L^p$-boundedness was established, for $p\in(1,\infty)$, of the wave operators for the pair $(-\Delta_{\alpha},-\Delta)$ in two dimensions (similar properties have been also investigated in dimension one and three \cite{DAncona-Pierfelice-Teta-2006,Duchene-Marzuola-Weinstein-2010,Iandoli-Scandone-2017,DMSY-2017}). As a direct consequence, this yields the following dispersive and Strichartz estimates.

\begin{proposition}\label{pr:did}
Let $\alpha\in\mathbb{R}$, and let $P^{(\alpha)}_{\mathrm{ac}}:L^2(\mathbb{R}^2)\to L^2(\mathbb{R}^2)$ be the orthogonal projection onto the absolute continuous subspace for $-\Delta_{\alpha}$, i.e., the orthogonal projection onto $\mathrm{span}\{\mathcal{G}_{-e(\alpha)}\}^{\perp}$. Then the propagator $(e^{\ii t\Delta_{\alpha} })_{t\in\mathbb{R}}$ satisfies
\begin{itemize}
\item[(i)] the dispersive estimate
\[
 \big\|e^{\ii t\Delta_{\alpha}}P^{(\alpha)}_{\mathrm{ac}}\psi\big\|_{L^{p'}}\;\lesssim\; t^{-1}\|\psi\|_{L^p}\qquad \forall p\in(1,2]\,,
\]
\item[(ii)] and the Strichartz estimates
 \begin{eqnarray}
  \|e^{\ii t\Delta_{\alpha}}P^{(\alpha)}_{\mathrm{ac}}\psi\|_{L^r(\mathbb{R}_t, L^p(\mathbb{R}_x^2))} \!\!&\lesssim&\!\! \|\psi\|_{L^2}\,, \label{stri-ho} \\
  \bigg\| \int_0^t e^{\ii(t-s)\Delta_{\alpha}}P^{(\alpha)}_{\mathrm{ac}}F(s)\,\ud s\bigg\|_{L^{r_1}(\mathbb{R}_t, L^{p_1}(\mathbb{R}^2_x))} \!\!&\lesssim&\!\!  \|F\|_{L^{r_2'}(\mathbb{R}_t, L^{p_2'}(\mathbb{R}^2_x))}\,, \label{stri-in}
 \end{eqnarray}
 valid for arbitrary admissible Strichartz pairs $(r_1,p_1)$ and $(r_2,p_2)$, that is, $p_j\in[2,\infty)$ and $p_j^{-1}+r_j^{-1}=\frac{1}{2}$, $j\in\{1,2\}$.
\end{itemize}
\end{proposition}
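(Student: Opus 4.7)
The strategy rests on the intertwining property of the wave operators
\[
 W_\pm\;:=\;s\text{-}\lim_{t\to\pm\infty}e^{-\ii t\Delta_\alpha}\,e^{\ii t\Delta}
\]
for the pair $(-\Delta_\alpha,-\Delta)$, which are complete in two dimensions (this is a classical fact about rank-one perturbations of $-\Delta$ of the considered type). Completeness gives $W_\pm^*W_\pm=\mathbbm{1}$ on $L^2(\mathbb{R}^2)$ and $W_\pm W_\pm^*=P^{(\alpha)}_{\mathrm{ac}}$, together with the operator identity
\[
 e^{\ii t\Delta_\alpha}P^{(\alpha)}_{\mathrm{ac}}\;=\;W_\pm\,e^{\ii t\Delta}\,W_\pm^*\qquad\forall t\in\mathbb{R}\,.
\]
The key input is the $L^p$-boundedness of $W_\pm$ from \cite{CMY-2018-2Dwaveop,CMY-2018-2DwaveopERR,Yajima-2021-Lpbdd-delta-2D}, namely $\|W_\pm g\|_{L^p}\lesssim_p\|g\|_{L^p}$ for every $p\in(1,\infty)$, whence the same bound for $W_\pm^*$ on $L^{p'}$ by duality.

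\textbf{Dispersive estimate.} Given $\psi\in L^p(\mathbb{R}^2)\cap L^2(\mathbb{R}^2)$ with $p\in(1,2]$, I would factor
\[
 \bigl\|e^{\ii t\Delta_\alpha}P^{(\alpha)}_{\mathrm{ac}}\psi\bigr\|_{L^{p'}}\;=\;\bigl\|W_\pm\,e^{\ii t\Delta}\,W_\pm^*\psi\bigr\|_{L^{p'}}\;\lesssim_p\;\bigl\|e^{\ii t\Delta}(W_\pm^*\psi)\bigr\|_{L^{p'}}\,,
\]
then invoke the classical two-dimensional dispersive estimate $\|e^{\ii t\Delta}g\|_{L^{p'}}\lesssim t^{-1}\|g\|_{L^p}$ for the free propagator (obtained by interpolation between the $L^1\to L^\infty$ bound $\lesssim t^{-1}$ and the $L^2$-unitarity, with $p\in(1,2]$), and finally use the $L^p$-boundedness of $W_\pm^*$ to pass from $W_\pm^*\psi$ to $\psi$. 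The claim follows with the same time decay as in the free case.

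\textbf{Strichartz estimates.} Both the homogeneous and inhomogeneous bounds can be deduced by the same intertwining scheme, avoiding any direct application of the Keel--Tao abstract machinery. For the homogeneous estimate \eqref{stri-ho}, I would write
\[
 \bigl\|e^{\ii t\Delta_\alpha}P^{(\alpha)}_{\mathrm{ac}}\psi\bigr\|_{L^{r}_t L^{p}_x}\;=\;\bigl\|W_\pm\,e^{\ii t\Delta}(W_\pm^*\psi)\bigr\|_{L^{r}_t L^{p}_x}\;\lesssim_p\;\bigl\|e^{\ii t\Delta}(W_\pm^*\psi)\bigr\|_{L^{r}_t L^{p}_x}
\]
and apply the classical Strichartz estimate in two dimensions followed by the $L^2$-isometry of $W_\pm^*$ on $\mathrm{ran}\,P^{(\alpha)}_{\mathrm{ac}}$. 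For \eqref{stri-in}, since the integral operator commutes with $W_\pm$ via the intertwining identity, the same argument yields
\[
 \biggl\|\int_0^t e^{\ii(t-s)\Delta_\alpha}P^{(\alpha)}_{\mathrm{ac}}F(s)\,\ud s\biggr\|_{L^{r_1}_t L^{p_1}_x}\;\lesssim\;\biggl\|\int_0^t e^{\ii(t-s)\Delta}(W_\pm^*F)(s)\,\ud s\biggr\|_{L^{r_1}_t L^{p_1}_x}\,,
\]
which is controlled by $\|W_\pm^*F\|_{L^{r_2'}_t L^{p_2'}_x}\lesssim \|F\|_{L^{r_2'}_t L^{p_2'}_x}$ via the classical inhomogeneous Strichartz estimate for $-\Delta$ and the $L^{p_2'}$-boundedness of $W_\pm^*$.

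\textbf{Main obstacle.} There is essentially no analytic obstacle once the cited $L^p$-boundedness of $W_\pm$ is taken as a black box: the proof reduces to the intertwining identity plus free-Laplacian estimates. The only technical point to keep track of is the restriction $p\in(1,\infty)$ coming from \cite{CMY-2018-2Dwaveop,Yajima-2021-Lpbdd-delta-2D}, which forces the admissible Strichartz exponents to lie in $[2,\infty)$ (as stated), excluding the endpoint $p=\infty$ and preserving admissibility on the Lebesgue side; the integrability exponents on the time side are matched by the free-case scaling $p_j^{-1}+r_j^{-1}=\tfrac12$, which is exactly the two-dimensional admissibility condition.
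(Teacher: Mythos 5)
Your proposal is correct and follows exactly the route the paper intends: Proposition \ref{pr:did} is stated there as a ``direct consequence'' of the $L^p$-boundedness ($p\in(1,\infty)$) of the wave operators for $(-\Delta_\alpha,-\Delta)$ from \cite{CMY-2018-2Dwaveop,CMY-2018-2DwaveopERR,Yajima-2021-Lpbdd-delta-2D}, and your intertwining argument $e^{\ii t\Delta_\alpha}P^{(\alpha)}_{\mathrm{ac}}=W_\pm e^{\ii t\Delta}W_\pm^*$ combined with the free dispersive and Strichartz estimates is precisely that deduction, with the exponent ranges correctly tracked. Nothing is missing.
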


 For the Proposition above, recall that $p'=\frac{p-1}{p}$, that $-\Delta_{\alpha}$, being self-adjoint, has no singular continuous spectrum, and that $-\Delta_{\alpha}$ admits one eigenvalue only, $e(\alpha)<0$, which is non-degenerate and has eigenfunction $\mathcal{G}_{-e(\alpha)}$.

 Observe that $L^1$-$L^\infty$ dispersive estimates \emph{cannot} hold: for, even a smooth initial datum $\psi$ evolves at later times into a function $e^{\ii t\Delta_{\alpha}}\psi$ that exhibits, for almost every non-zero $t$, a non-trivial singular component proportional to $\mathcal{G}_{\omega}\not\in L^{\infty}(\mathbb{R}^2)$.

 Given the explicit structure of the absolutely continuous subspace for $-\Delta_\alpha$, the above inequalities can be generalised, locally in time, without orthogonal projection.

 \begin{corollary}\label{cor:disp-strich}
  Let $\alpha\in\mathbb{R}$ and $T>0$. Then the propagator $(e^{\ii t\Delta_{\alpha} })_{t\in\mathbb{R}}$ satisfies local-in-time Strichartz estimates
   \begin{eqnarray}
  \|e^{\ii t\Delta_{\alpha}}\psi\|_{L^r([0,T], L^p(\mathbb{R}_x^2))} \!\!&\lesssim_T&\!\! \|\psi\|_{L^2}\,, \label{eq.STr3} \\
  \bigg\| \int_0^t e^{\ii(t-s)\Delta_{\alpha}}F(s)\,\ud s\bigg\|_{L^{r_1}([0,T], L^{p_1}(\mathbb{R}^2_x))} \!\!&\lesssim_T &\!\!  \|F\|_{L^{r_2'}([0,T], L^{p_2'}(\mathbb{R}^2_x))} \label{eq.STr4}
 \end{eqnarray}
 for arbitrary admissible Strichartz pairs $(r_1,p_1)$ and $(r_2,p_2)$, that is, $p_j\in[2,\infty)$ and $p_j^{-1}+r_j^{-1}=\frac{1}{2}$, $j\in\{1,2\}$. The $T$-dependent constants in \eqref{eq.STr3}-\eqref{eq.STr4} above are $O(1)$ as $T\downarrow 0$.
 \end{corollary}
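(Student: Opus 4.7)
The strategy is to split off the bound-state component via the spectral decomposition
\[
 \psi \;=\; P^{(\alpha)}_{\mathrm{ac}}\psi + (\mathbbm{1}-P^{(\alpha)}_{\mathrm{ac}})\psi\,,
\]
so that Proposition \ref{pr:did} handles the first piece directly, and one only needs to control the second, which lives in the one-dimensional eigenspace $\mathrm{span}\{\mathcal{G}_{-e_\alpha}\}$, by hand. Write $(\mathbbm{1}-P^{(\alpha)}_{\mathrm{ac}})\psi = c\,\mathcal{G}_{-e_\alpha}$ with $c=\langle\psi,\mathcal{G}_{-e_\alpha}\rangle_{L^2}/\|\mathcal{G}_{-e_\alpha}\|_{L^2}^2$, and similarly $(\mathbbm{1}-P^{(\alpha)}_{\mathrm{ac}})F(s) = c(s)\,\mathcal{G}_{-e_\alpha}$ for the inhomogeneity. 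Since $-\Delta_{\alpha}\mathcal{G}_{-e_\alpha}=e_\alpha\mathcal{G}_{-e_\alpha}$, one has
\[
 e^{\ii t\Delta_{\alpha}}\mathcal{G}_{-e_\alpha}\;=\;e^{-\ii t e_\alpha}\mathcal{G}_{-e_\alpha}\,,
\]
so the flow acts on the eigenspace merely as a phase: the bound-state part has $L^p$-norm constant in time.

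For the homogeneous estimate \eqref{eq.STr3}, the triangle inequality gives
\[
 \|e^{\ii t\Delta_\alpha}\psi\|_{L^r_T L^p_x}\;\leqslant\;\|e^{\ii t\Delta_\alpha}P^{(\alpha)}_{\mathrm{ac}}\psi\|_{L^r_T L^p_x}+|c|\,\|\mathcal{G}_{-e_\alpha}\|_{L^p}\,T^{1/r}\,.
\]
The first term is bounded by $\|\psi\|_{L^2}$ thanks to \eqref{stri-ho}. For the second, Cauchy--Schwarz yields $|c|\lesssim\|\psi\|_{L^2}$, while Lemma \ref{lem:gom1} (equation \eqref{eq.fsp1}) gives $\|\mathcal{G}_{-e_\alpha}\|_{L^p}<\infty$ for every $p\in[2,\infty)$. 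Since $r<\infty$ for any admissible Strichartz pair (being $r=\infty$ ruled out in $d=2$ by $p<\infty$), the factor $T^{1/r}$ is finite and vanishes as $T\downarrow 0$.

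For the inhomogeneous estimate \eqref{eq.STr4}, the AC-component is controlled by \eqref{stri-in}, and the bound-state component is
\[
 \int_0^t e^{\ii(t-s)\Delta_\alpha}(\mathbbm{1}-P^{(\alpha)}_{\mathrm{ac}})F(s)\,\ud s\;=\;\Big(\!\int_0^t e^{-\ii(t-s)e_\alpha}c(s)\,\ud s\Big)\mathcal{G}_{-e_\alpha}\,.
\]
Cauchy--Schwarz in $x$ gives $|c(s)|\lesssim\|F(s)\|_{L^{p_2'}}\|\mathcal{G}_{-e_\alpha}\|_{L^{p_2}}$, so H\"older in time (with exponents $r_2'$ and $r_2$) yields
\[
 \Big|\int_0^t e^{-\ii(t-s)e_\alpha}c(s)\,\ud s\Big|\;\lesssim\;T^{1/r_2}\,\|F\|_{L^{r_2'}_T L^{p_2'}_x}\,,
\]
and a further multiplication by $\|\mathcal{G}_{-e_\alpha}\|_{L^{p_1}}\,T^{1/r_1}$, after taking $L^{r_1}_T$ in $t$, produces a bound of the form $\lesssim T^{1/r_1+1/r_2}\|F\|_{L^{r_2'}_T L^{p_2'}_x}$, with all $L^{p_j}$-norms of $\mathcal{G}_{-e_\alpha}$ finite by \eqref{eq.fsp1}. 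Summing the two contributions gives \eqref{eq.STr4}, with $T$-dependent constant $O(1)$ as $T\downarrow 0$. The only mild point to watch is that all admissible exponents satisfy $r,r_1,r_2\in(2,\infty]$ in two dimensions, so all powers of $T$ appearing are well-defined and tend to zero with $T$.
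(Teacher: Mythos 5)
Your proof is correct, and for the homogeneous estimate \eqref{eq.STr3} it coincides with the paper's: split off the rank-one bound-state component via $P^{(\alpha)}_{\mathrm{ac}}$, apply \eqref{stri-ho} to the absolutely continuous part, and observe that the eigenfunction $\mathcal{G}_{-e_\alpha}$ only picks up a phase under the flow and lies in every $L^p(\mathbb{R}^2)$, $p\in[1,\infty)$, by \eqref{eq.fsp1}. For the inhomogeneous estimate \eqref{eq.STr4} you diverge from the paper: the paper deduces \eqref{eq.STr4} from \eqref{eq.STr3} by the standard $\mathsf{T}\mathsf{T}^*$ argument together with the Christ--Kiselev lemma, whereas you decompose the Duhamel integrand itself, invoke \eqref{stri-in} for the projected part, and estimate the bound-state contribution by hand with H\"older in $x$ and in $t$, producing the explicit factor $T^{1/r_1+1/r_2}$. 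Your route is more elementary and self-contained (no Christ--Kiselev needed), at the cost of being slightly longer; both are perfectly valid.

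One small inaccuracy to fix: you claim that $r=\infty$ is ``ruled out in $d=2$ by $p<\infty$''. It is not --- the admissible pair $(r,p)=(\infty,2)$ satisfies $p\in[2,\infty)$ and $p^{-1}+r^{-1}=\tfrac12$, so it is allowed. In that case $T^{1/r}=1$ does not vanish as $T\downarrow 0$, but it is still $O(1)$, which is all the Corollary asserts; so the conclusion is unaffected, only your parenthetical justification (and the later claim that all powers of $T$ ``tend to zero with $T$'') should be softened to ``remain bounded as $T\downarrow 0$''.
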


 \begin{proof}
  Since, for $\psi\in L^2(\mathbb{R}^2)$,
  \[
   P^{(\alpha)}_{\mathrm{ac}}\psi\;=\;\psi-\frac{\;\langle \psi,\mathcal{G}_{-e_\alpha}\rangle_{L^2}}{\|\mathcal{G}_{-e_\alpha}\|_{L^2}^2}\,\mathcal{G}_{-e_\alpha}\,,
  \]
  one has
  \[
   e^{\ii t\Delta_{\alpha}}\psi\;=\;e^{\ii t\Delta_{\alpha}}P^{(\alpha)}_{\mathrm{ac}}\psi+\frac{\;\langle \psi,\mathcal{G}_{-e_\alpha}\rangle_{L^2}}{\|\mathcal{G}_{-e_\alpha}\|_{L^2}^2}\,e^{\ii t e_\alpha}\mathcal{G}_{-e_\alpha}\,.
  \]
Taking the $L^r_tL^p_x$-norm in the above identity, for $t\in[0,T]$, and using \eqref{eq.fsp1} and \eqref{stri-ho}, yields \eqref{eq.STr3}. A standard application of the $\mathsf{T}\mathsf{T}^*$-argument and the Christ-Kiselev lemma \cite{Christ-Kiselev-2001} then yields \eqref{eq.STr4}.
 \end{proof}

\section{Existence of ground states}\label{sec:ex_grou}
In this Section we prove the existence of a minimiser of the optimisation problem \eqref{200} claimed in Theorem \ref{vs:10}.

Some preparation is in order. To begin with, as anticipated in the Introduction, our approach requires a control of the Hartree non-linearity by performing a symmetric re-arrangement of its terms that be compatible with the internal structure $H^1_\alpha(\mathbb{R}^2)=H^1(\mathbb{R}^2)\dotplus\mathrm{span}\{\mathcal{G}_\omega\}$ of the energy space. As usual, we denote with $\varphi^*$ the Schwartz symmetrisation of a given measurable function $\varphi:\mathbb{R}^2\to\mathbb{R}^+$ \cite{Kesavan-symm-2006}. Observe that $\varphi\mapsto\varphi^*$ is \emph{not} linear.

 To this aim, we can replace the ordinary Riesz inequality \eqref{eq.Rin1} with the announced modification \eqref{eq.newRiesz} by exploiting the following estimate, that is fair to refer to collectively as the Brascamp-Friedberg-Lieb-Luttinger inequality (BFLL for short), as it was conjectured by Friedberg and Luttinger in \cite{Luttinger-Friedberg-1976}, and demonstrated by Brascamp, Lieb, and Luttinger in \cite{Brascamp-Lieb-Luttinger-1974}: given $N,n\in\mathbb{N}$ and the multi-linear functional
 \begin{equation}
  I(F_1,\dots,F_N)\;:=\;\int_{\mathbb{R}^{2n}} \prod_{j=1}^N F_j(L_j(X))\,\ud X
 \end{equation}
 on non-negative measurable functions $F_1,\dots,F_N$ on $\mathbb{R}^2$, where $X\equiv(x_1,\dots,x_N)\in\mathbb{R}^{2N}$ and $L_1,\dots,L_N:\mathbb{R}^{2n}\to\mathbb{R}^2$ are surjective linear maps of the form $ L_j(X) = \sum_{\ell=1}^n a_{j\ell} x_\ell$ for given real numbers $a_{j\ell}$, $j\in\{1,\dots,N\}$, $\ell\in\{1,\dots,n\}$, one has
 \begin{equation}\label{eq.BL6}
   I(F_1, \cdots, F_n) \;\leqslant\; I(F_1^*, \cdots, F_n^*)\,.
\end{equation}
 (Clearly, \eqref{eq.BL6} is non-trivial only for $N>n$.)

\begin{lemma} \label{lbl12}
For positive measurable functions $w,f,g$ on $\mathbb{R}^2$ one has
\begin{equation}\label{eq.Rin3}
  \int_{\mathbb{R}^2}(w\ast(f+g)^2)(f+g)^2 \, \ud x \;\leqslant\; \int_{\mathbb{R}^2}((w^*)\ast(f^*+g^*)^2)(f^*+g^*)^2\,\ud  x\,,
\end{equation}
provided that both sides of the inequality are finite.
\end{lemma}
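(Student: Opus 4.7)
The plan is to reduce \eqref{eq.Rin3} to the Brascamp--Lieb--Luttinger (BFLL) inequality \eqref{eq.BL6} by expanding the square $(f+g)^2 = f^2 + 2fg + g^2$ on both sides. This reduces the claim to the collection of pointwise-in-$x$ identities, and then to a sum of integrals of the model form
\[
  J(h_1,h_2,h_3,h_4) \;:=\; \int_{\mathbb{R}^2}\!\!\int_{\mathbb{R}^2} w(x-y)\, h_1(y)\,h_2(y)\,h_3(x)\,h_4(x)\,\mathrm{d}y\,\mathrm{d}x,
\]
where each $h_i$ ranges in $\{f,g\}$. Specifically, expanding both factors $(f+g)^2$ gives nine terms of this form (with multinomial coefficients $1,2,1,2,4,2,1,2,1$), and the very same expansion performed on the right-hand side of \eqref{eq.Rin3}, with $f,g,w$ replaced by $f^*,g^*,w^*$, produces exactly the analogous nine terms $J^*(h_1^*,h_2^*,h_3^*,h_4^*)$.

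Each such $J(h_1,h_2,h_3,h_4)$ is a five-factor multilinear integral in $X=(x,y)\in\mathbb{R}^{2\cdot 2}$ of non-negative measurable functions composed with the surjective linear maps $L_w(X) = x-y$, $L_1(X)=L_2(X) = y$, $L_3(X)=L_4(X) = x$ from $\mathbb{R}^{2\cdot 2}$ to $\mathbb{R}^2$. Thus \eqref{eq.BL6} with $N=5$, $n=2$ applies and yields
\[
 J(h_1,h_2,h_3,h_4) \;\leqslant\; J^*(h_1^*, h_2^*, h_3^*, h_4^*).
\]
Since the rearrangement $h \mapsto h^*$ depends only on $h$, the pairing between terms on the two sides of the expansion is preserved, and summing the nine BFLL bounds with their coefficients reconstitutes precisely the right-hand side of \eqref{eq.Rin3}. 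The finiteness hypothesis on both sides ensures that each individual $J$ and $J^*$ is finite, so the manipulations are rigorously justified.

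There is no real obstacle beyond bookkeeping: the substantive ingredient, BFLL, is already available. The only point that requires a line of care is that the linear forms $L_w, L_1, L_2, L_3, L_4$ above are indeed surjective onto $\mathbb{R}^2$ (they clearly are, being given by the projections and the difference map on $\mathbb{R}^2\times\mathbb{R}^2$), and that the non-negativity and measurability of $w,f,g$ propagate to $w^*,f^*,g^*$, as is standard for Schwartz symmetrisation. Hence \eqref{eq.Rin3} follows directly.
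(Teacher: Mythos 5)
Your proof is correct and follows essentially the same route as the paper's: expand the quartic $(f+g)^2(y)(f+g)^2(x)$ and bound each resulting five-factor integral by the BFLL inequality with $N=5$, $n=2$. The only cosmetic difference is that the paper groups the ``diagonal'' terms under the ordinary Riesz inequality and singles out two representative cross terms, whereas you apply BFLL uniformly to all nine terms; the substance is identical.
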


\begin{proof}[Proof of Lemma \ref{lbl12}]
We have to check the validity of
 \begin{equation*}\tag{i}\label{eq.BL2}
 \begin{aligned}
   & \iint_{\mathbb{R}^2\times\mathbb{R}^2}w(x_1-x_2)(f(x_1)+g(x_1))^2(f(x_2)+g(x_2))^2 \,\ud x_1 \,\ud x_2  \\
   & \qquad\leqslant \iint_{\mathbb{R}^2\times\mathbb{R}^2}w^*(x_1-x_2)(f^*(x_1)+g^*(x_1))^2(f^*(x_2)+g^*(x_2))^2 \,\ud x_1 \,\ud x_2 \,.
      \end{aligned}
\end{equation*}
In turn, \eqref{eq.BL2} follows once one checks
\begin{equation*}\tag{ii}\label{eq.BL3}
\begin{aligned}
   & \iint_{\mathbb{R}^2\times\mathbb{R}^2}w(x_1-x_2)f(x_1)g(x_1)f(x_2)^2  \,\ud x_1 \,\ud x_2
   \\&\qquad \leqslant\iint_{\mathbb{R}^2\times\mathbb{R}^2}w^*(x_1-x_2)f^*(x_1)g^*(x_1)f^*(x_2)^2  \,\ud x_1 \,\ud x_2
      \end{aligned}
\end{equation*}
and
\begin{equation*}\tag{iii}\label{eq.BL4}
\begin{aligned}
   & \iint_{\mathbb{R}^2\times\mathbb{R}^2}w(x_1-x_2)f(x_1)g(x_1)f(x_2)g(x_2)\,\ud x_1 \,\ud x_2 \\
   &\qquad\leqslant \iint_{\mathbb{R}^2\times\mathbb{R}^2}w^*(x_1-x_2)f^*(x_1)g^*(x_1)f^*(x_2)g^*(x_2) \,\ud x_1 \,\ud x_2\,,
      \end{aligned}
\end{equation*}
 in combination with the ordinary Riesz inequality \eqref{eq.Rin1}.
 Now, \eqref{eq.BL3} follows from the BFLL inequality \eqref{eq.BL6} with the special choice $N=5$, $n=2$, and
 \[
  \begin{split}
   & F_1(L_1(X))\,=\,w(x_1-x_2)\,,\quad F_2(L_2(X))\,=\,f(x_1)\,, \\
   & F_3(L_3(X))\,=\,g(x_1)\,,\quad F_4(L_4(X))\,=\,F_5(L_5(X))\,=\,f(x_2)\,.
  \end{split}
 \]
 With an analogous choice one establishes also \eqref{eq.BL4}.
\end{proof}

 Concerning the Hartree non-linearity, we further need for multiple purposes a standard control of its integrability and local Lipschitz property in suitable $L^p$-sense and $H^1_\alpha$-sense.

\begin{lemma}\label{lem:Hnonlin-controlled}
	Let $w\in L^p(\mathbb{R}^2)$ for $p\in[1,\infty)$.
	\begin{itemize}
		\item[(i)] Let $q_1,q_2,q_3,r\in [1,\infty]$ be such that $1-\frac{1}{p}=\frac{1}{q_1}+\frac{1}{q_2}+\frac{1}{q_3}-\frac{1}{r}$. Then, for every $\psi_j\in L^{q_j}(\mathbb{R}^2)$, $j\in\{1,2,3\}$,
		\begin{equation}\label{eq:yoho}
			\|(w*(\psi_1\psi_2))\psi_3\|_{L^{r}}\;\lesssim \;\|w\|_{L^p}\prod_{j=1}^3\|\psi_j\|_{L^{q_j}}\,.
		\end{equation}
		In particular,
		\begin{equation}\label{eq:L2Hartree}
		 \big\|(w*|\psi|^2)\psi\big\|_{L^{2}}\;\lesssim \;\|w\|_{L^p}\|\psi\|^3_{H^1_\alpha}\,.
		\end{equation}
	\item[(ii)] Let $q_1,q_2,r\in[1,\infty]$ be such that $1-\frac{1}{p}=\frac{2}{q_1}+\frac{1}{q_2}-\frac{1}{r}$. Then, for every $\psi_1,\psi_2\in L^{q_1}(\mathbb{R}^2)\cap L^{q_2}(\mathbb{R}^2)$,
		\begin{equation}\label{eq:dig}
		\begin{split}
		 &\big\|(w*|\psi_1|^2)\psi_1-(w*|\psi_2|^2)\psi_2\big\|_{L^r} \\
		 &\qquad \lesssim\; \|w\|_{L^p}\big(\|\psi_1\|_{L^{q_1}}^2+\|\psi_2\|_{L^{q_1}}^2\big)\|\psi_1-\psi_2\|_{L^{q_2}}\,.
		\end{split}
		\end{equation}
	\item[(iii)] Given $\psi_j\in L^{\frac{4p}{2p-1}}(\mathbb{R}^2)$, $j\in\{1,2,3,4\}$,
	\begin{equation}\label{quartic_p}
		\|(w*(\psi_1\psi_2))\psi_3\psi_4\|_{L^1}\;\lesssim \;\|w\|_{L^p}\prod_{j=1}^4\|\psi_j\|_{L^{\frac{4p}{2p-1}}}\,.
	\end{equation}
In particular, given $\psi_j\in H_{\alpha}^1(\mathbb{R}^2)$, $j\in\{1,2,3,4\}$,
\begin{equation}\label{quartic}
	\|(w*(\psi_1\psi_2))\psi_3\psi_4\|_{L^1}\;\lesssim \;\|w\|_{L^p}\prod_{j=1}^4\|\psi_j\|_{H_{\alpha}^1}\,.
\end{equation}
\item[(iv)] Let $\alpha\in\mathbb{R}$ and $\psi\in H^1_\alpha(\mathbb{R}^2)$. Then
 \begin{equation}\label{eq:gani}
 \bigg|\int_{\mathbb{R}^2}(w*|\psi|^2)|\psi|^2\,\ud x\,\bigg|\;\lesssim\; \|w\|_{L^p}\|\psi\|_{H_{\alpha}^1}^{\frac{2}{p}} \|\psi\|_{L^2}^{4-\frac{2}{p}}\,.
\end{equation}
	\end{itemize}
\end{lemma}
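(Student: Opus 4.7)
The plan is to derive all four estimates from a uniform mechanism: combine Hölder's inequality (to reorganize pointwise products), Young's convolution inequality (to spend the Lebesgue regularity of $w$), the Gagliardo-Nirenberg inequality of Corollary \ref{cor:GN-ineq}, and the continuous embedding $\|\psi\|_{L^2}\le\|\psi\|_{H_{\alpha}^1}$ built into the energy norm \eqref{eq:energynormH1alpha-bis}. No symmetrization or perturbed-Sobolev refinement is needed here; everything beyond classical Hölder-Young is absorbed into Corollary \ref{cor:GN-ineq}.

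For (i), I would first apply Hölder to get $\|\psi_1\psi_2\|_{L^s}\le\|\psi_1\|_{L^{q_1}}\|\psi_2\|_{L^{q_2}}$ with $s^{-1}=q_1^{-1}+q_2^{-1}$, then Young to produce $\|w*(\psi_1\psi_2)\|_{L^t}\le\|w\|_{L^p}\|\psi_1\psi_2\|_{L^s}$ with $t^{-1}=p^{-1}+s^{-1}-1$, and finally Hölder once more to pair with $\psi_3$: $\|(w*(\psi_1\psi_2))\psi_3\|_{L^r}\le\|w*(\psi_1\psi_2)\|_{L^t}\|\psi_3\|_{L^{q_3}}$ with $r^{-1}=t^{-1}+q_3^{-1}$. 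Collecting exponents gives precisely $1-p^{-1}=q_1^{-1}+q_2^{-1}+q_3^{-1}-r^{-1}$. The special case \eqref{eq:L2Hartree} corresponds to $q_1=q_2=q_3=:q=6p/(3p-2)$ and $r=2$; since $q\in[2,6]$ for $p\in[1,\infty)$, Corollary \ref{cor:GN-ineq} combined with $\|\psi\|_{L^2}\le\|\psi\|_{H_{\alpha}^1}$ yields $\|\psi\|_{L^q}\lesssim\|\psi\|_{H_{\alpha}^1}$, and \eqref{eq:L2Hartree} follows.

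For (ii), I would exploit the algebraic identity
\begin{equation*}
(w*|\psi_1|^2)\psi_1-(w*|\psi_2|^2)\psi_2=\bigl(w*(|\psi_1|^2-|\psi_2|^2)\bigr)\psi_1+(w*|\psi_2|^2)(\psi_1-\psi_2)
\end{equation*}
together with the pointwise bound $\bigl||\psi_1|^2-|\psi_2|^2\bigr|\le(|\psi_1|+|\psi_2|)\,|\psi_1-\psi_2|$, and apply the trilinear estimate from (i) to each of the two resulting terms with exponents $(q_1,q_2,q_1)$ and $(q_1,q_1,q_2)$ respectively; summing and using $(|\psi_1|+|\psi_2|)^2\lesssim|\psi_1|^2+|\psi_2|^2$ produces \eqref{eq:dig}. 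For (iii), I would pair factors as $\psi_1\psi_2$ inside the convolution and $\psi_3\psi_4$ outside, apply Hölder with conjugate exponents $(t,t')$, and then Hölder again to each pair in $L^{q/2}$ with $q:=4p/(2p-1)$; the single constraint $t'^{-1}=2/q$ with $t^{-1}=p^{-1}+2/q-1$ reduces to $4/q=2-p^{-1}$, which is precisely the choice of $q$. The $H_{\alpha}^1$-version \eqref{quartic} follows immediately since $q\in[2,4]$ for $p\in[1,\infty)$ and Corollary \ref{cor:GN-ineq} again gives $\|\psi_j\|_{L^q}\lesssim\|\psi_j\|_{H_{\alpha}^1}$.

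Finally, (iv) follows by specializing \eqref{quartic_p} to $\psi_1=\psi_2=\psi_3=\psi_4=\psi$, obtaining $\int(w*|\psi|^2)|\psi|^2\,\ud x\lesssim\|w\|_{L^p}\|\psi\|_{L^q}^4$, and interpolating via Corollary \ref{cor:GN-ineq}: $\|\psi\|_{L^q}^4\lesssim\|\psi\|_{H_{\alpha}^1}^{4(1-2/q)}\|\psi\|_{L^2}^{8/q}$. An arithmetic check with $q=4p/(2p-1)$ gives $4(1-2/q)=2/p$ and $8/q=4-2/p$, exactly matching \eqref{eq:gani}. The only genuine care point throughout is bookkeeping of indices and verifying that the conjugate exponents produced by Hölder-Young fall in the admissible range of Corollary \ref{cor:GN-ineq}; the endpoint case $p=1$ (mass critical scenario) corresponds to $q=6$ in (i) and $q=4$ in (iii), both safely inside $[2,\infty)$.
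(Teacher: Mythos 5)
Your proposal is correct and follows essentially the same route as the paper: Hölder--Young exponent bookkeeping for (i) and (iii), the standard telescoping decomposition for (ii), and the quartic bound combined with the $L^{4p/(2p-1)}$-interpolation for (iv) (your invocation of Corollary \ref{cor:GN-ineq} is just a repackaging of the paper's direct use of $H^{\frac{1}{2p}}(\mathbb{R}^2)\hookrightarrow L^{\frac{4p}{2p-1}}(\mathbb{R}^2)$ together with \eqref{eq.defss4}). The exponent arithmetic you report checks out in every case.
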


\begin{proof}
	\eqref{eq:yoho} is a straightforward consequence of Young and H\"{o}lder inequalities.
	\eqref{eq:L2Hartree} follows from \eqref{eq:yoho} re-written in the form
	\[
	 \|(w*|\psi|^2)\psi\|_{L^{2}}\;\lesssim\;\|w\|_{L^{p}}\|\psi\|_{L^{\frac{6p}{3p-2}}}^3
	\]
and from the embedding \eqref{eq.sob1}.
	Along the same way, combining \eqref{eq:yoho} with the identity
	\begin{equation}\label{pdpd}
	 \begin{split}
	  &(w*|\psi_1|^2)\psi_1-(w*|\psi_2|^2)\psi_2 \\
	  &\qquad=\;(w*|\psi_1|^2)(\psi_1-\psi_2)+\big(w*\big((|\psi_1|+|\psi_2|)(|\psi_1|-|\psi_2|)\big)\big)\psi_2\,,
	 \end{split}
	\end{equation}
 one obtains \eqref{eq:dig}. \eqref{quartic_p} follows from \eqref{eq:yoho} with the choice $q_1=q_2=2q_3$, $r=1$, and with $\psi_3$ replaced by $\psi_3\psi_4$. \eqref{quartic} follows from \eqref{quartic_p} and the embedding \eqref{eq.sob1}. Last, the Sobolev embedding $H^{\frac{1}{2p}}(\mathbb{R}^2)\hookrightarrow L^{\frac{4p}{2p-1}}(\mathbb{R}^2)$ and the interpolation inequality
 \[
  \|\psi\|_{H^{\frac{1}{2p}}} \;\lesssim\; \|\psi\|^{\frac{1}{2p}}_{H^1_{\alpha} }\|\psi\|_{L^2}^{1-\frac{1}{2p}}
 \]
(see \eqref{eq.defss4} above) imply the estimate
\[
 \|\psi\|_{L^{\frac{4p}{2p-1}}}^4\;\lesssim\; \|\psi\|^{\frac{2}{p}}_{H^1_{\alpha} }\|\psi\|_{L^2}^{4-\frac{2}{p}}\,,
\]
which, combined with \eqref{quartic_p} (with the choice $\psi_j=\psi$, $j\in\{1,2,3,4\}$) yields \eqref{eq:gani}.
\end{proof}

In the preceding two lemmas, the even symmetry of $w$ was not needed. Instead, when such a condition is assumed (un particular, under \eqref{ass_w}), standard change of variable and Fubini's theorem guarantee the following property that for convenience we single out in a separate lemma.

\begin{lemma}\label{le:change.w}
Let $w,f,g$ be measurable functions on $\R^2$, and assume that $w$ is real-valued and even. Suppose moreover that
$$\iint_{\R^2\times\mathbb{R}^2}\big|\,w(x-y)\,f(x)\,g(y)\,\big|\,\ud x\,\ud y\;<\;+\infty\,.$$
Then
$$\int_{\R^2}(w*f)\,\overline{g}\,\ud x\;=\;\int_{\R^2}f\,\overline{(w*g)}\,\ud x\,.$$
\end{lemma}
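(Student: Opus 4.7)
The plan is to reduce the identity to a double-integral manipulation and exploit the symmetries of $w$. First I would unfold both convolutions into iterated integrals. The left-hand side reads
$$\int_{\R^2}(w*f)(x)\,\overline{g(x)}\,\ud x \;=\;\int_{\R^2}\!\left(\int_{\R^2}w(x-y)\,f(y)\,\ud y\right)\overline{g(x)}\,\ud x.$$
To collapse it to a double integral via Fubini, I need absolute integrability of $(x,y)\mapsto w(x-y)f(y)\overline{g(x)}$. This is exactly the stated hypothesis after the change of variables $(x,y)\mapsto(y,x)$, combined with the evenness of $w$: indeed $|w(x-y)f(y)g(x)|=|w(y-x)f(y)g(x)|$, which relabels to the integrand appearing in the assumption. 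Hence Fubini applies.

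Once both sides are rewritten as double integrals, the proof becomes a short symbolic rearrangement. After Fubini, the left-hand side is
$$\iint_{\R^2\times\R^2}w(x-y)\,f(y)\,\overline{g(x)}\,\ud y\,\ud x,$$
which, by relabeling $y\leftrightarrow x$, equals $\iint w(y-x)f(x)\overline{g(y)}\,\ud x\,\ud y$. Now I invoke the two defining symmetries of $w$: the evenness gives $w(y-x)=w(x-y)$, and the fact that $w$ is real-valued gives $w(x-y)=\overline{w(x-y)}$, so the integrand becomes $f(x)\,\overline{w(x-y)g(y)}$. Applying Fubini a second time (the absolute integrability holding by the same argument as before) produces $\int f(x)\overline{(w*g)(x)}\,\ud x$, which is the right-hand side.

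The only step that requires any care is the justification of Fubini — verifying that the integrability assumption, stated in the form $\iint|w(x-y)f(x)g(y)|\,\ud x\,\ud y<\infty$, covers both orderings of $f$ and $g$ relative to $x$ and $y$. This is immediate from the evenness of $w$, so there is no genuine obstacle in the proof; the lemma is essentially a bookkeeping statement that even, real-valued convolution kernels yield self-adjoint operators, stated at the level of measurable functions under a minimal absolute-integrability condition.
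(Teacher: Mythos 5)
Your proof is correct and is precisely the argument the paper has in mind: the lemma is stated there without a written proof, being attributed to "standard change of variable and Fubini's theorem," and your write-up simply fills in those details (including the one point worth checking, namely that the stated integrability hypothesis covers both orderings of $f$ and $g$ via the evenness of $w$). Nothing further is needed.
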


  As a further preparation, let us provide a more explicit connection between the equivalent expressions \eqref{eq:energynormH1alpha}-\eqref{eq:energynormH1alpha-bis} and \eqref{eq:equiv-norm} for the $H^1_\alpha$-norm.
  Regarding, as mentioned, $-\Delta_\alpha$ as a self-adjoint extension of a symmetric operator whose Friedrichs extension is the self-adjoint $-\Delta$, and taking into account that the Friedrichs and the $\alpha$-extension differ, in the resolvent sense, by the rank-one projection onto $\mathcal{G}_\omega$ (see \eqref{eq:res_formula}), one has the following Birman formula (see, e.g., \cite[Theorem 7]{GMO-KVB2017}) for the quadratic form of $-\Delta_\alpha$ evaluated on a generic element $g=f+c\,\mathcal{G}_\omega\in\mathcal{D}[-\Delta_\alpha]=H^1_\alpha(\mathbb{R}^2)$, where $f\in H^1(\mathbb{R}^2)$, $c\in\mathbb{C}$, and $\omega>0$:
   \begin{equation}\label{eq:Bformula}
    (-\Delta_\alpha)[g]+\omega\|g\|_{L^2}^2 \;=\;\|\nabla f\|_{L^2}^2+\omega\|f\|_{L^2}^2+\frac{|c|^2}{\beta_\alpha(\omega)}\,.
  \end{equation}
  (Observe that \eqref{eq:Bformula} is consistent with what one deduces directly from \eqref{eq:op_dom}-\eqref{eq:opaction} in the particular case when $g = f + f(0) \mathcal{G}_\omega/\beta_\alpha(\omega) \in\mathcal{D}(\Delta_\alpha)=H^2_\alpha(\mathbb{R}^2)$, where now $f \in H^2(\mathbb{R}^2)$: indeed,
  \[
   \begin{split}
    (-\Delta_\alpha)[g]+\omega\|g\|_{L^2}^2\;&=\;\langle (-\Delta_\alpha + \omega)g, g \rangle_{L^2} \;=\; \langle (-\Delta + \omega) f, g \rangle_{L^2} \\
    &=\;  \langle (-\Delta + \omega) f, f \rangle_{L^2}+ \frac{\overline{f(0)}}{\beta_\alpha(\omega)}\,\langle (-\Delta + \omega) f, \mathcal{G}_\omega \rangle_{L^2} \\
    &=\;\|\nabla f\|_{L^2}^2+\omega\|f\|_{L^2}^2+\frac{|f(0)|^2}{\beta_\alpha(\omega)}\,,
   \end{split}
  \]
  which is precisely \eqref{eq:Bformula}.)
  When $\omega>|e_\alpha|$, one has $\beta_{\alpha}(\omega)>0$  and
   \begin{equation}\label{eq:pre-eq}
    \big\|(-\Delta_\alpha+\omega)^{\frac{1}{2}}g\big\|_{L^2}^2\;=\;(-\Delta_\alpha)[g]+\omega\|g\|_{L^2}^2\;=\;\|\nabla f\|_{L^2}^2+\omega\|f\|_{L^2}^2+\frac{|c|^2}{\beta_\alpha(\omega)}\,.
   \end{equation}
  In particular, we took $\omega=-e_\alpha+1$ in \eqref{eq:energynormH1alpha}-\eqref{eq:energynormH1alpha-bis} to fix the standard expression for the $H^1_\alpha$-norm, and the equivalence of norms \eqref{eq:equiv-norm} is precisely a consequence of \eqref{eq:pre-eq}. We also introduced the Weinstein functional \eqref{eq.wp1} writing the numerator therein as the l.h.s.~of \eqref{eq:Bformula} with $\omega=\lambda>|e_\alpha|$.

  Next, for the minimisation \eqref{200} we show that one can select a minimising sequence with certain explicit features as follows.

  \begin{lemma}\label{lem:minimis-seq-feat}
   Let $\alpha\in\mathbb{R}$ and $\lambda>|e_\alpha|$. Assume that $w$ satisfies the condition \eqref{ass_w}. The minimisation problem \eqref{200} admits a minimising sequence $(v_n)_{n\in\mathbb{N}}$ in $H^1_{\alpha}(\mathbb{R}^2)$ such that, for every $n$,
   \begin{equation}\label{eq:struct-min}
    v_n\;=\;f_n+c_n\mathcal{G}_\lambda
   \end{equation}
   for some $c_n\geqslant 0$ and some non-negative, radial function $f_n\in H^1(\mathbb{R}^2)$ that is monotone decreasing in $|x|$, and additionally satisfies
   \begin{equation}\label{eq.asp1m}
   \int_{\mathbb{R}^2} (w*v_n^2) \,v_n^2 \,\ud x \;=\; 1\,.
\end{equation}
 Such a sequence is uniformly bounded in $H^1_{\alpha}(\mathbb{R}^2)$.
  \end{lemma}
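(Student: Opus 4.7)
The plan is to start from an arbitrary minimising sequence for \eqref{200} and improve it through three successive operations—reduction to real non-negative components, Schwartz symmetrisation of the regular part, and a final rescaling—each of which does not increase $\mathcal{W}^{(w)}_{\alpha,\lambda}$, so the resulting sequence is still minimising while possessing all the claimed structural features. Concretely, fix any minimising sequence $(\widetilde v_n)$ and decompose via \eqref{eq:H1alphastructure} as $\widetilde v_n=\widetilde f_n+\widetilde c_n\mathcal{G}_\lambda$ with $\widetilde f_n\in H^1(\mathbb{R}^2)$ and $\widetilde c_n\in\mathbb{C}$, which is admissible because $\lambda>|e_\alpha|$ yields $\beta_\alpha(\lambda)>0$. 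By the Birman identity \eqref{eq:pre-eq}, the numerator of $\mathcal{W}^{(w)}_{\alpha,\lambda}(\widetilde v_n)$ equals $\|\nabla\widetilde f_n\|_{L^2}^2+\lambda\|\widetilde f_n\|_{L^2}^2+|\widetilde c_n|^2/\beta_\alpha(\lambda)$; replacing $\widetilde f_n$ by $|\widetilde f_n|$ and $\widetilde c_n$ by $|\widetilde c_n|$ does not increase it, by the standard pointwise bound $|\nabla|u||\le|\nabla u|$ for $u\in H^1$, while the pointwise estimate $|\widetilde v_n|^2\le(|\widetilde f_n|+|\widetilde c_n|\mathcal{G}_\lambda)^2$ together with $w\ge 0$ shows that the denominator evaluated at $|\widetilde f_n|+|\widetilde c_n|\mathcal{G}_\lambda$ dominates the one at $\widetilde v_n$. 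Symmetrising the regular part via $f_n:=|\widetilde f_n|^*$ (and keeping $c_n:=|\widetilde c_n|\ge 0$) then lowers the numerator further by equimeasurability and the P\'{o}lya-Szeg\H{o} inequality \eqref{eq:PSz}.

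The key step—the only really delicate one in the argument—concerns what symmetrisation does to the denominator, and here I would apply the modified Riesz inequality of Lemma \ref{lbl12} to $f=|\widetilde f_n|$ and $g=|\widetilde c_n|\mathcal{G}_\lambda$, the integrals involved being finite by the Gagliardo-Nirenberg-type bound \eqref{eq:gani}. The crucial observation is that both $w$ and $\mathcal{G}_\lambda$ coincide with their own Schwartz rearrangements: $w^*=w$ by hypothesis \eqref{ass_w}, and $\mathcal{G}_\lambda^*=\mathcal{G}_\lambda$ by the strict radial monotonicity of $\mathcal{G}_\omega$ recalled after \eqref{eq.i1}. Writing $v_n^\flat:=f_n+c_n\mathcal{G}_\lambda$, Lemma \ref{lbl12} then yields
\[\int_{\mathbb{R}^2}\!\!\big(w*(|\widetilde f_n|+|\widetilde c_n|\mathcal{G}_\lambda)^2\big)(|\widetilde f_n|+|\widetilde c_n|\mathcal{G}_\lambda)^2\,\ud x\;\le\;\int_{\mathbb{R}^2}\!\!\big(w*(v_n^\flat)^2\big)(v_n^\flat)^2\,\ud x,\]
so the denominator does not decrease, $(v_n^\flat)$ remains minimising, and it already has the structure \eqref{eq:struct-min} with non-negative, radial, non-increasing $f_n$ and $c_n\ge 0$.

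Finally, I would exploit the order-zero homogeneity of $\mathcal{W}^{(w)}_{\alpha,\lambda}$: choosing $\mu_n:=\big(\int(w*(v_n^\flat)^2)(v_n^\flat)^2\,\ud x\big)^{-1/4}$—well-defined and finite by \eqref{eq:gani} and the non-triviality of $w$ and $v_n^\flat$—and setting $v_n:=\mu_n v_n^\flat$ produces the required sequence satisfying the normalisation \eqref{eq.asp1m} and preserving every structural feature. Uniform $H^1_\alpha$-boundedness is then immediate: with the denominator normalised to $1$, the numerator $(-\Delta_\alpha)[v_n]+\lambda\|v_n\|_{L^2}^2$ converges to $\widetilde{\mathcal{W}}^{(w)}_{\alpha,\lambda}$ and is therefore bounded, and by \eqref{eq:pre-eq} it is equivalent to $\|v_n\|_{H^1_\alpha}^2$ (both being $\omega$-norms with admissible $\omega$). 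I do not foresee further obstacles beyond the careful verification of the $w^*=w$ and $\mathcal{G}_\lambda^*=\mathcal{G}_\lambda$ identifications that make the modified Riesz inequality applicable.
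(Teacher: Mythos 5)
Your proposal is correct and follows essentially the same route as the paper: take absolute values of the regular part and of the coefficient (controlling the numerator via the Birman formula \eqref{eq:pre-eq} and $|\nabla|u||\leqslant|\nabla u|$, and the denominator via pointwise domination and $w\geqslant 0$), then symmetrise only the regular part, invoking the P\'{o}lya-Szeg\H{o} inequality for the numerator and Lemma \ref{lbl12} with $w^*=w$ and $\mathcal{G}_\lambda^*=\mathcal{G}_\lambda$ for the denominator, and finally rescale by zero-order homogeneity and read off boundedness from the normalised Weinstein value. No gaps.
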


  \begin{proof}
   Any minimising sequence $(\mathfrak{v}_n)_{n\in\mathbb{N}}$ has the structure $\mathfrak{v}_n=\mathfrak{f}_n+\mathfrak{c}_n\mathcal{G}_\lambda$ with $\mathfrak{f}_n\in H^1(\mathbb{R}^2)$ and $\mathfrak{c}_n\in\mathbb{C}$, on account of \eqref{eq:H1alphastructure}. For any such $(\mathfrak{v}_n)_{n\in\mathbb{N}}$, the new sequence $(\widetilde{v}_n)_{n\in\mathbb{N}}$ defined by
   \[
    \widetilde{v}_n \,:=\, \widetilde{f}_n + c_n\mathcal{G}_\lambda\,,\quad\textrm{ with } \widetilde{f}_n\,:=\,|\mathfrak{f}_n|\,\textrm{ and }\, c_n\,:=\,|\mathfrak{c}_n|
   \]
   still belongs to $H^1_\alpha(\mathbb{R}^2)$, because $|\mathfrak{f}_n|\in H^1(\mathbb{R}^2)$
   Beside,
      \[
   |\mathfrak{v}_n| \;\leqslant\; \widetilde{v}_n\,,\qquad \int_{\mathbb{R}^2} (w*|\mathfrak{v}_n|^2) \,|\mathfrak{v}_n|^2 \,\ud x \;\leqslant\; \int_{\mathbb{R}^2} (w*\widetilde{v}_n^2) \,\widetilde{v}_n^2 \,\ud x \,,
   \]
 and
 \[
  \begin{split}
   (-\Delta_{\alpha})[\widetilde{v}_n] +\lambda \|\widetilde{v}_n\|_{L^2}^2\;&=\;\|\nabla |\mathfrak{f}_n|\|_{L^2}^2+\lambda\|\mathfrak{f}_n\|_{L^2}^2+\frac{|\mathfrak{c}_n|^2}{\beta_\alpha(\lambda)} \\
   &\leqslant\;\|\nabla \mathfrak{f}_n\|_{L^2}^2+\lambda\|\mathfrak{f}_n\|_{L^2}^2+\frac{|\mathfrak{c}_n|^2}{\beta_\alpha(\lambda)}\;=\; (-\Delta_{\alpha})[\mathfrak{v}_n] +\lambda \|\mathfrak{v}_n\|_{L^2}^2\,,
  \end{split}
 \]
 having used \eqref{eq:Bformula} and $|\nabla |\mathfrak{f}_n||\leqslant|\nabla\mathfrak{f}_n|$. Thus,
 \[
  \mathcal{W}^{(w)}_{\alpha,\lambda}(\widetilde{v}_n) \;\leqslant\; \mathcal{W}^{(w)}_{\alpha,\lambda}(\mathfrak{v}_n)\,,
 \]
 showing that $(\widetilde{v}_n)_{n\in\mathbb{N}}$ too is a minimising sequence. The further sequence $(v_n)_{n\in\mathbb{N}}$ defined by
 \[
  v_n\,:=\,f_n+c_n\mathcal{G}_\lambda\,,\qquad f_n\,:=\,(\widetilde{f}_n)^*
 \]
 (namely, with the sole Schwartz symmetrisation of the regular part of each $\widetilde{v}_n$) still belongs to $H^1_\alpha(\mathbb{R}^2)$, since $(\widetilde{f}_n)^*\in H^1(\mathbb{R}^2)$, owing to the invariance of the $L^2$-norm under symmetrisation and the P\'{o}lya-Szeg\H{o} inequality \eqref{eq:PSz}. Moreover, by construction, the coefficient $c_n$ of the singular part is non-negative, and the regular part $f_n$ is non-negative, radial, and monotone decreasing in $|x|$. Lemma \ref{lbl12}, the P\'{o}lya-Szeg\H{o} inequality again, and the spherical symmetry of $w$, imply
  \[
  \frac{\big\|\nabla (\widetilde{f}_n)^*\big\|_{L^2}^2+\lambda\big\|(\widetilde{f}_n)^*\big\|_{L^2}^2+c_n^2/\beta_\alpha(\lambda)}{\;\displaystyle\left(\int_{\mathbb{R}^2}(w\ast|v_n|^2)|v_n|^2 \ud x\right)^{\!\frac{1}{2}}}\;\leqslant\;\frac{\|\nabla \widetilde{f}_n\|_{L^2}^2+\lambda\|\widetilde{f}_n\|_{L^2}^2+c_n^2/\beta_\alpha(\lambda)}{\;\displaystyle\left(\int_{\mathbb{R}^2}(w\ast|\widetilde{v}_n|^2)|\widetilde{v}_n|^2 \ud x\right)^{\!\frac{1}{2}}}\,,
 \]
  that is,
   \[
  \mathcal{W}^{(w)}_{\alpha,\lambda}(v_n) \;\leqslant\;\mathcal{W}^{(w)}_{\alpha,\lambda}(\widetilde{v}_n)\,.
 \]
  Thus, $(v_n)_{n\in\mathbb{N}}$ too is a minimising sequence. An obvious multiplicative re-sizing of $v_n$ allows to match the additional condition \eqref{eq.asp1m}. We have so far shown that a minimising sequence can be indeed chosen with all the properties stated in the Lemma, but for the uniform $H^1_{\alpha}$-boundedness, which we prove now. Since
  \[
 \begin{split}
    \|v_n\|_{H^1_{\alpha}}^2\;& =  (-\Delta_{\alpha})[v_n]+(1-e_\alpha) \|v_n\|_{L^2}^2 \leq\;\big\|(-\Delta_\alpha+\lambda)^{\frac{1}{2}}v_n\big\|_{L^2}^2 \\
    &=\;\|\nabla f_n\|_{L^2}^2+\lambda\|f_n\|_{L^2}^2+|c_n|^2/\beta_\alpha(\omega)\;=\;\mathcal{W}^{(w)}_{\alpha,\lambda}(v_n)\;\xrightarrow{\,n\to\infty\,}\;\widetilde{\mathcal{W}}^{(w)}_{\alpha,\lambda}\,,
 \end{split}
 \]
 having used \eqref{eq:pre-eq} with $\omega=\lambda \geqslant 1-e_\alpha$ and \eqref{eq.asp1m}, $(v_n)_{n\in\mathbb{N}}$ is indeed bounded in $H^1_\alpha(\mathbb{R}^2)$.
\end{proof}

 For the interest per se and for later application (Proposition \ref{pr:machi}), let us single out the key property emerged from the proof of Lemma \ref{lem:minimis-seq-feat}:

 \begin{corollary}\label{cor:loweringWeinstein}
  Let $\alpha\in\mathbb{R}$ and $\lambda>|e_\alpha|$. Assume that $w$ satisfies the condition \eqref{ass_w}. Given a non-zero element $f+c\,\mathcal{G}_\lambda\in H^1_\alpha(\mathbb{R}^2)$, according to the decomposition \eqref{eq:H1alphastructure}, then also $|f|+|c|\,\mathcal{G}_\lambda$ and $|f|^*+|c|\,\mathcal{G}_\lambda$ belong to $H^1_\alpha(\mathbb{R}^2)\setminus\{0\}$ (where $|f|^*$ is the Schwartz symmetrisation of $|f|$), and
  \begin{equation}\label{eq:loweringWeinstein}
   \mathcal{W}^{(w)}_{\alpha,\lambda}\big(|f|^*+|c|\,\mathcal{G}_\lambda\big) \;\leqslant\; \mathcal{W}^{(w)}_{\alpha,\lambda}\big(|f|+|c|\,\mathcal{G}_\lambda\big)\;\leqslant\;\mathcal{W}^{(w)}_{\alpha,\lambda}\big(f+c\,\mathcal{G}_\lambda\big)\,.
  \end{equation}
 \end{corollary}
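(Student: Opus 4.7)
The plan is to distill the statement into the three-step comparison chain already implicit in the proof of Lemma \ref{lem:minimis-seq-feat}: first pass from $f+c\mathcal{G}_\lambda$ to $|f|+|c|\mathcal{G}_\lambda$ by taking the modulus of the regular part and the absolute value of the singular coefficient, and then from $|f|+|c|\mathcal{G}_\lambda$ to $|f|^*+|c|\mathcal{G}_\lambda$ by Schwartz-symmetrising the regular part. At each step I would first check that the intermediate object is a non-zero element of $H^1_\alpha(\mathbb{R}^2)$, and then compare numerator and denominator of $\mathcal{W}^{(w)}_{\alpha,\lambda}$ separately.

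For well-posedness, I would note that $|f|\in H^1(\mathbb{R}^2)$ with $|\nabla|f||\leqslant|\nabla f|$ a.e., and that $|f|^*\in H^1(\mathbb{R}^2)$ with $\||f|^*\|_{L^2}=\|f\|_{L^2}$ and $\|\nabla|f|^*\|_{L^2}\leqslant\|\nabla|f|\|_{L^2}$ by the P\'{o}lya--Szeg\H{o} inequality \eqref{eq:PSz}; the decomposition \eqref{eq:H1alphastructure} then yields membership of both expressions in $H^1_\alpha(\mathbb{R}^2)$. Non-triviality is a case check: if $c\neq 0$ the strict positivity of $\mathcal{G}_\lambda$ makes both $|f|+|c|\mathcal{G}_\lambda$ and $|f|^*+|c|\mathcal{G}_\lambda$ pointwise positive, and if $c=0$ the assumption $f+c\mathcal{G}_\lambda\neq 0$ gives $f\not\equiv 0$ and hence $|f|\not\equiv 0$ and $|f|^*\not\equiv 0$.

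For the right-hand inequality I would use the Birman identity \eqref{eq:pre-eq} to compute both numerators: the only piece that changes is $\|\nabla f\|_{L^2}^2\mapsto\|\nabla|f|\|_{L^2}^2$, which can only decrease. For the denominator, the pointwise estimate $|f+c\mathcal{G}_\lambda|\leqslant|f|+|c|\mathcal{G}_\lambda$ (which uses $\mathcal{G}_\lambda>0$) together with $w\geqslant 0$ from \eqref{ass_w} shows that the Hartree integral can only grow. A smaller numerator over a larger (positive) denominator then gives the bound $\mathcal{W}^{(w)}_{\alpha,\lambda}(|f|+|c|\mathcal{G}_\lambda)\leqslant\mathcal{W}^{(w)}_{\alpha,\lambda}(f+c\mathcal{G}_\lambda)$.

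For the left-hand inequality the numerator step again reduces to \eqref{eq:pre-eq} combined with $L^2$-invariance of symmetrisation and the P\'{o}lya--Szeg\H{o} inequality. The main step is the denominator, where I would invoke the generalised Riesz inequality of Lemma \ref{lbl12} with second argument set to $|c|\mathcal{G}_\lambda$: its hypotheses require identifying $w$ and $|c|\mathcal{G}_\lambda$ with their own Schwartz rearrangements, which holds because $w^*=w$ by the radial non-increasing assumption \eqref{ass_w}, and because $\mathcal{G}_\lambda$ is itself radial and strictly monotone decreasing by \eqref{eq.i1}, whence $(|c|\mathcal{G}_\lambda)^*=|c|\mathcal{G}_\lambda$. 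This identification of $w$ and $|c|\mathcal{G}_\lambda$ with their own symmetrisations is the only delicate point of the argument, and is precisely where the radial monotonicity hypothesis in \eqref{ass_w} is essential.
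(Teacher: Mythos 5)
Your proposal is correct and follows essentially the same route as the paper: the two-step chain $f+c\,\mathcal{G}_\lambda \to |f|+|c|\,\mathcal{G}_\lambda \to |f|^*+|c|\,\mathcal{G}_\lambda$, with the Birman identity \eqref{eq:pre-eq} and $|\nabla|f||\leqslant|\nabla f|$ (resp.\ P\'{o}lya--Szeg\H{o}) controlling the numerators, the pointwise bound $|f+c\,\mathcal{G}_\lambda|\leqslant|f|+|c|\,\mathcal{G}_\lambda$ with $w\geqslant 0$ controlling the first denominator, and Lemma \ref{lbl12} with $w^*=w$ and $(|c|\,\mathcal{G}_\lambda)^*=|c|\,\mathcal{G}_\lambda$ controlling the second, is exactly the argument extracted from the proof of Lemma \ref{lem:minimis-seq-feat}.
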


For the remaining part of this Section, $(v_n)_{n\in\mathbb{N}}$ shall denote a minimising sequence with the features given by Lemma \ref{lem:minimis-seq-feat}. In particular,
 \begin{equation}
  \mathcal{W}^{(w)}_{\alpha,\lambda}(v_n)\;=\;\|\nabla f_n\|_{L^2}^2+\lambda\|f_n\|_{L^2}^2+\frac{c_n^2}{\beta_\alpha(\lambda)}\,,\quad \lambda>|e_\alpha|\,,\quad\beta_\alpha(\lambda)>0\,.
 \end{equation}
 Because of the uniform $H^1_{\alpha}$-boundedness of $(v_n)_{n\in\mathbb{N}}$, there exists
 \begin{equation}\label{eq:vcirc}
  v_\circ\;=\;f_\circ+c_\circ \mathcal{G}_\lambda\,\in\, H^1_{\alpha}(\mathbb{R}^2)\,,
 \end{equation}
possibly a null function, such that $v_n \rightharpoonup v_\circ$ $H^1_{\alpha}$-weakly, which in turn implies that $f_n \rightharpoonup  f_\circ$ weakly in $H^1(\mathbb{R}^2)$ and $c_n\to c_\circ$. Moreover, $f_\circ$ is necessarily non-negative, spherically symmetric, and radially decreasing, and $c_\circ\geqslant 0$. This, and the properties of $\mathcal{G}_\lambda$, imply that $v_\circ$ too is non-negative, spherically symmetric, and radially decreasing.

 The next ingredient is a \emph{compactness result} for the Hartree type non-linearity.

\begin{lemma}\label{cl1}
 Let $\alpha\in\mathbb{R}$. Assume that $w$ satisfies the condition \eqref{ass_w} and that $(g_n)_{n\in\mathbb{N}}$ is a uniformly bounded sequence in $H^1_{\alpha} (\mathbb{R}^2)$ consisting of non-negative, spherically symmetric functions and such that $g_n \rightharpoonup g_\circ$ $H^1_{\alpha}$-weakly. Then, up to subsequence,
 \begin{equation}\label{eq.lim12}
  \lim_{n\to\infty}\int_{\mathbb{R}^2} (w*g_n^2) \,g_n^2 \,\ud x\;=\;\int_{\mathbb{R}^2} (w*g_\circ^2) \,g_\circ^2 \,\ud x\,.
 \end{equation}
\end{lemma}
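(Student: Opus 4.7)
The approach is to combine radial compactness with the quartic estimate \eqref{quartic_p}. First I would upgrade the weak $H^1_\alpha$-convergence to strong $L^q$-convergence for suitable exponents $q>2$, then rewrite the Hartree difference as a product involving $g_n-g_\circ$, and finally close via the quartic bound.

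\textbf{Step 1 (strong $L^q$-convergence).} Because $(g_n)_{n\in\N}$ is uniformly bounded in $H^1_{\alpha,\mathrm{rad}}(\R^2)$, the compact embedding \eqref{eq.cse1} of Lemma \ref{lem:H1alpha-embedding} furnishes, via a diagonal extraction, a subsequence (still denoted $(g_n)_{n}$) for which $g_n\to g_\circ$ strongly in $L^q(\R^2)$ for every $q\in(2,\infty)$. In particular $g_\circ$ is non-negative, spherically symmetric, and $g_n^2\to g_\circ^2$ in $L^{q/2}(\R^2)$ for every such $q$.

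\textbf{Step 2 (symmetric rewriting).} Split $w=w_1+w_2$ with $w_j\in L^{p_j}(\R^2)$. Adding and subtracting $\int(w\ast g_n^2)g_\circ^2$ and applying Lemma \ref{le:change.w} (legitimately, as $w$ is real and even, and the integrability of the kernel is ensured by \eqref{quartic}), one obtains
\begin{equation*}
\int_{\R^2}(w\ast g_n^2)g_n^2\,\ud x-\int_{\R^2}(w\ast g_\circ^2)g_\circ^2\,\ud x\;=\;\int_{\R^2}(g_n^2-g_\circ^2)\bigl(w\ast(g_n^2+g_\circ^2)\bigr)\,\ud x\,.
\end{equation*}
Factor now $g_n^2-g_\circ^2=(g_n-g_\circ)(g_n+g_\circ)$.

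\textbf{Step 3 (quartic bound and conclusion).} For each $j\in\{1,2\}$ set $q_j:=4p_j/(2p_j-1)$; since $p_j\in[1,\infty)$, one has $q_j\in(2,4]$, so $q_j>2$. Estimate \eqref{quartic_p} of Lemma \ref{lem:Hnonlin-controlled}, applied once with the inner square $g_n^2$ and once with $g_\circ^2$, gives
\begin{equation*}
\Bigl|\int_{\R^2}(g_n-g_\circ)(g_n+g_\circ)\bigl(w_j\ast(g_n^2+g_\circ^2)\bigr)\,\ud x\Bigr|\;\lesssim\;\|w_j\|_{L^{p_j}}\,\|g_n-g_\circ\|_{L^{q_j}}\,\|g_n+g_\circ\|_{L^{q_j}}\bigl(\|g_n\|_{L^{q_j}}^2+\|g_\circ\|_{L^{q_j}}^2\bigr).
\end{equation*}
By the Sobolev-type embedding \eqref{eq.sob1} and the uniform $H^1_\alpha$-bound on $(g_n)_n$, the last three factors are uniformly bounded in $n$, while Step 1 ensures $\|g_n-g_\circ\|_{L^{q_j}}\to 0$. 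Summing over $j\in\{1,2\}$ yields \eqref{eq.lim12}.

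\textbf{Main obstacle.} The crux is that the exponent $q_j$ produced by the quartic estimate must lie in the range $(2,\infty)$ where the Strauss radial compactness \eqref{eq.cse1} is available; this is indeed the case for every admissible $p_j\in[1,\infty)$ (including the mass-critical endpoint $p_j=1$), so no further assumption on $w$ beyond \eqref{ass_w} is required. The $L^{p_1}+L^{p_2}$ decomposition of $w$ is then absorbed by linearity, and the evenness of $w$ is exactly what allows the symmetrisation in Step 2 via Lemma \ref{le:change.w}.
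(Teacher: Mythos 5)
Your proof is correct and follows essentially the same route as the paper's: both reduce the difference of the two quartic integrals to an expression carrying a factor $\|g_n-g_\circ\|_{L^{4p/(2p-1)}}$, bound it by the quartic estimate \eqref{quartic_p}, and conclude via the compact radial embedding \eqref{eq.cse1}, since $4p/(2p-1)\in(2,4]$. The only (cosmetic) difference is the algebraic decomposition: the paper uses a two-term triangle-inequality splitting with $|g_n^2-g_\circ^2|\leqslant|g_n-g_\circ|(g_n+g_\circ)$ inserted in each slot, whereas you use the exact symmetric bilinear identity via Lemma \ref{le:change.w}; both lead to the same bound.
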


\begin{proof} Clearly, $g_\circ$ is non-negative and spherically symmetric too. From
	\[
	 \begin{split}
	  &\bigg|\int_{\mathbb{R}^2} (w*g_n^2) \,g_n^2 \,\ud x-\int_{\mathbb{R}^2} (w*g_\circ^2) \,g_\circ^2 \,\ud x\bigg| \\
	  &\quad\leqslant\;\int_{\mathbb{R}^2} (w*g_n^2) |g_n-g_\circ|(g_n+g_\circ)\,\ud x+\int_{\mathbb{R}^2}\Big(w*\big(|g_n-g_\circ|(g_n+g_\circ)\big)\Big)|g_\circ|^2\,\ud x
	 \end{split}
	\]
       and from \eqref{quartic_p} one finds
       \[
        \begin{split}
         &\bigg|\int_{\mathbb{R}^2} (w*g_n^2) \,g_n^2 \,\ud x-\int_{\mathbb{R}^2} (w*g_\circ^2) \,g_\circ^2 \,\ud x\bigg| \\
         &\qquad\lesssim\;\|w\|_{L^p}\Big( \|g_n\|^3_{L^{\frac{4p}{2p-1}}}+\|g_\circ\|^3_{L^{\frac{4p}{2p-1}}}\Big)\|g_n-g_\circ\|_{L^{\frac{4p}{2p-1}}}\,.
        \end{split}
       \]
     As $\frac{4p}{2p-1}\in(2,4]\subset(2,\infty)$, the compact embedding $H^1_{\alpha,\mathrm{rad}}(\mathbb{R}^2)\hookrightarrow L^{\frac{4p}{2p-1}}(\mathbb{R}^2)$ from Lemma \ref{lem:H1alpha-embedding} is applicable; using this fact, and the uniform $H^1_{\alpha}$-boundedness, one concludes that up to extracting a sub-sequence, the expression above converges to $0$ as $n\to\infty$.
\end{proof}

 \begin{corollary}
  The function \eqref{eq:vcirc} is non-zero.
 \end{corollary}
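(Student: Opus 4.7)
The plan is to apply the compactness statement of Lemma \ref{cl1} directly to the minimising sequence $(v_n)_{n\in\mathbb{N}}$ produced by Lemma \ref{lem:minimis-seq-feat}, and then to exploit the normalisation \eqref{eq.asp1m} to conclude that $v_\circ \not\equiv 0$.

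More concretely, I would first recall that each $v_n = f_n + c_n \mathcal{G}_\lambda$ with $f_n\geqslant 0$ spherically symmetric and radially non-increasing, $c_n\geqslant 0$, and $\mathcal{G}_\lambda$ itself non-negative and radial. In particular every $v_n$ is non-negative and spherically symmetric, and the sequence $(v_n)_{n\in\mathbb{N}}$ is uniformly bounded in $H^1_\alpha(\mathbb{R}^2)$ (again by Lemma \ref{lem:minimis-seq-feat}). By construction, $v_n \rightharpoonup v_\circ$ weakly in $H^1_\alpha(\mathbb{R}^2)$. Thus all the hypotheses of Lemma \ref{cl1} are met for the choice $g_n:=v_n$ and $g_\circ:=v_\circ$.

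Applying Lemma \ref{cl1}, up to extracting a subsequence,
\[
\int_{\mathbb{R}^2} (w*v_n^2)\,v_n^2 \,\ud x \;\xrightarrow{\,n\to\infty\,}\; \int_{\mathbb{R}^2} (w*v_\circ^2)\,v_\circ^2 \,\ud x\,.
\]
On the other hand, the normalisation \eqref{eq.asp1m} asserts that the left-hand side is identically equal to $1$ for every $n$. Hence
\[
\int_{\mathbb{R}^2} (w*v_\circ^2)\,v_\circ^2 \,\ud x \;=\; 1\,,
\]
which is incompatible with $v_\circ \equiv 0$ and forces $v_\circ$ to be a non-zero element of $H^1_\alpha(\mathbb{R}^2)$.

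There is essentially no obstacle once the minimising sequence has been chosen with the structural properties of Lemma \ref{lem:minimis-seq-feat}: the genuine work has already been absorbed into that lemma (to guarantee non-negativity, radial monotonicity, and uniform $H^1_\alpha$-boundedness) and into Lemma \ref{cl1} (to guarantee the passage to the limit of the quartic Hartree functional along radial weakly convergent sequences, which itself relies on the compact embedding $H^1_{\alpha,\mathrm{rad}}(\mathbb{R}^2)\subset\subset L^{\frac{4p}{2p-1}}(\mathbb{R}^2)$).
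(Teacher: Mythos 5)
Your argument is correct and coincides with the paper's own proof: both apply Lemma \ref{cl1} to the minimising sequence of Lemma \ref{lem:minimis-seq-feat} and use the normalisation \eqref{eq.asp1m} to conclude that $\int_{\mathbb{R}^2}(w*v_\circ^2)\,v_\circ^2\,\ud x=1$, hence $v_\circ\not\equiv 0$. Your explicit verification of the hypotheses of Lemma \ref{cl1} (non-negativity, radial symmetry, uniform $H^1_\alpha$-boundedness, weak convergence) is a welcome, if routine, addition.
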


  \begin{proof}
  The constraint \eqref{eq.asp1m} from Lemma \ref{lem:minimis-seq-feat}, and Lemma \ref{cl1}, imply that, up to re-fining the minimising sequence $(v_n)_{n\in\mathbb{N}}$,
    \begin{equation*}
  \int_{\mathbb{R}^2} (w*v_\circ^2) \,v_\circ^2 \,\ud x\;=\;\lim_{n\to\infty}\int_{\mathbb{R}^2} (w*v_n^2) \,v_n^2 \,\ud x\;=\;1\,,
 \end{equation*}
   which prevents $v_\circ$ from being almost-everywhere vanishing.
  \end{proof}

  We can finally assemble the above preparations and address the core part of the proof of the existence of ground states (Theorem \ref{vs:10}).

  We saw that, as $n\to\infty$,
  \begin{equation}\label{eq:arecap}
  \begin{split}
   &\|\nabla f_n\|_{L^2}^2+\lambda\|f_n\|_{L^2}^2+c_n^2/\beta_\alpha(\lambda)\;\searrow\;\widetilde{\mathcal{W}}^{(w)}_{\alpha,\lambda}\,, \\
   &\qquad f_n \rightharpoonup f_\circ\quad \textrm{$H^1_\alpha$-weakly}\,,\qquad c_n\to c_\circ
  \end{split}
  \end{equation}
for some non-zero $v_\circ=f_\circ+c_\circ \mathcal{G}_\lambda\in H^1_{\alpha,\mathrm{rad}}(\mathbb{R}^2)$ ($\lambda>|e_\alpha|$, $\beta_{\alpha}(\lambda)>0$) with $f_\circ$ being non-negative, spherically symmetric, and radially decreasing. Moreover $v_\circ $ satisfies the constraint
  \begin{equation}\label{eq:vcircconstraint}
   \int_{\mathbb{R}^2} (w*v_\circ^2) \,v_\circ^2 \,\ud x\;=\;1\,.
   \end{equation}
Since $f_n \rightharpoonup f$ $H^1$-weakly, then both $\nabla f_n \rightharpoonup \nabla f_\circ$ and $f_n \rightharpoonup f_\circ$ $L^2$-weakly, and the weak lower semi-continuity of $L^2$-norms implies
  \begin{equation*}
   \lim_{n\to\infty}\big(\|\nabla f_n\|_{L^2}^2+\lambda\|f_n\|_{L^2}^2\big)\;\geqslant\;\|\nabla f_\circ\|_{L^2}^2+\lambda\|f_\circ\|_{L^2}^2\,.
  \end{equation*}
   On the other hand, since $\widetilde{\mathcal{W}}^{(w)}_{\alpha,\lambda}$ is the \emph{infimum} of the minimisation problem under consideration, \eqref{eq:arecap} excludes that
  \[
   \lim_{n\to\infty}\big(\|\nabla f_n\|_{L^2}^2+\lambda\|f_n\|_{L^2}^2\big)\;>\;\|\nabla f_\circ\|_{L^2}^2+\lambda\|f_\circ\|_{L^2}^2\,.
  \]
  Therefore, necessarily
  \begin{equation}
   \lim_{n\to\infty}\big(\|\nabla f_n\|_{L^2}^2+\lambda\|f_n\|_{L^2}^2\big)\;=\;\|\nabla f_\circ\|_{L^2}^2+\lambda\|f_\circ\|_{L^2}^2\,,
  \end{equation}
  i.e., $f_n\to f_\circ$ in $H^1(\mathbb{R}^2)$. Together with $c_n\to c_\circ$, and in view of \eqref{eq:equiv-norm}, this means also $v_n\to v_\circ$ in $H^1_{\alpha}(\mathbb{R}^2)$. As a consequence,
  \begin{equation}
   \widetilde{\mathcal{W}}^{(w)}_{\alpha,\lambda}\;=\;\|\nabla f_\circ\|_{L^2}^2+\lambda\|f_\circ\|_{L^2}^2+\frac{c_\circ^2}{\beta_\alpha(\lambda)}\;=\; \mathcal{W}^{(w)}_{\alpha,\lambda}(v_\circ)\,,
  \end{equation}
 that is, $v_\circ$ is a minimiser. And since $v_\circ$ is non-zero, then $\widetilde{\mathcal{W}}^{(w)}_{\alpha,\lambda}>0$. The existence part of Theorem \ref{vs:10} is thus established. For \emph{this} ground state $v_\circ$ also the non-negativity, the spherical symmetry, and the radial decreasing are established, as well as the analogous properties for the regular part $f_\circ$ of $v_\circ$.

\section{Structural properties of the ground states}\label{sec:radial}
We come now to characterising the structural properties of ground states claimed in Theorem \ref{vs:10}, namely the real positivity (up to constant multiplicative phases), the $H_{\alpha}^2$-regularity, the spherical symmetry, and the strict radial decrease.

We first obtain the explicit Euler-Lagrange equation associated to $\mathcal{W}^{(w)}_{\alpha,\lambda}$.

\begin{lemma}\label{le:EL}
Let $\alpha\in\mathbb{R}$ and $\lambda>|e_{\alpha}|$, and let $w$ satisfy \eqref{ass_w}. Then the functional $\mathcal{W}^{(w)}_{\alpha,\lambda}$ defined by \eqref{eq.wp1} is of class $\mathcal{C}^1$ on $H_{\alpha}^1(\R^2)\setminus\{0\}$. 
Moreover, every minimiser $v_{\alpha}$ of \eqref{200} satisfies the Euler-Lagrange equation
\begin{equation}\label{eq.rsr1}
	-\Delta_{\alpha}v_{\alpha}+\lambda v_{\alpha} - \Lambda_{\alpha,\lambda}^{(w)}(v_{\alpha}) (w* \left|v_{\alpha} \right|^2)v_{\alpha}\;=\;0
\end{equation}
as an identity in $H_{\alpha}^{-1}(\R^2)$, where $\Lambda_{\alpha,\lambda}^{(w)}:H_{\alpha}^1(\R^2)\setminus\{0\}\to\R$ is defined by \eqref{eq:Lambdav}.
\end{lemma}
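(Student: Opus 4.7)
The plan is to write $\mathcal{W}^{(w)}_{\alpha,\lambda}(v)=N(v)/\sqrt{D(v)}$, where $N(v):=(-\Delta_\alpha)[v]+\lambda\|v\|_{L^2}^2$ and $D(v):=\int_{\R^2}(w*|v|^2)|v|^2\,\ud x$, and to verify separately that $N\in\mathcal{C}^\infty(H^1_\alpha,\R)$, that $D\in\mathcal{C}^1(H^1_\alpha,\R)$ with an explicit derivative, and that $D(v)>0$ whenever $v\neq 0$. A standard quotient rule then gives the $\mathcal{C}^1$-regularity of $\mathcal{W}^{(w)}_{\alpha,\lambda}$ on $H^1_\alpha(\R^2)\setminus\{0\}$, and the Euler--Lagrange condition will yield \eqref{eq.rsr1}. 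The smoothness of $N$ is immediate: since $\lambda>|e_\alpha|$, identity \eqref{eq:pre-eq} shows that $N$ is a continuous Hermitian quadratic form associated to a scalar product equivalent to the $H^1_\alpha$-one, with $N'(v)[h]=2\re(\mathcal{Q}_\alpha(v,h)+\lambda\langle v,h\rangle_{L^2})$. Positivity of $D(v)$ for $v\neq 0$ is a direct consequence of hypothesis \eqref{ass_w}: since $w$ is non-negative, radial, non-increasing, and not identically zero, there exist $r,c>0$ with $w\geqslant c$ on $B_r$, whence $D(v)\geqslant c\iint_{|x-y|<r}|v(x)|^2|v(y)|^2\,\ud x\,\ud y>0$ by a Lebesgue-point argument applied to the non-trivial $|v|^2\in L^1(\R^2)$.

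For the $\mathcal{C}^1$-regularity of $D$, decompose $w=w_1+w_2$ with $w_i\in L^{p_i}(\R^2)$ and work component-wise, so that Lemma \ref{lem:Hnonlin-controlled} applies. Expanding $|v+h|^2=|v|^2+2\re(v\bar h)+|h|^2$ and invoking the symmetry of $w$ via Lemma \ref{le:change.w} to merge the two cross terms, one obtains $D(v+h)-D(v)=4\re\langle (w*|v|^2)v,h\rangle_{L^2}+R(v,h)$, where $R(v,h)$ collects summands cubic and quartic in $h$; each of them is controlled by the quartic estimate \eqref{quartic}, giving $|R(v,h)|\lesssim(\|v\|_{H^1_\alpha}^2+\|h\|_{H^1_\alpha}^2)\|h\|_{H^1_\alpha}^2=o(\|h\|_{H^1_\alpha})$. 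This identifies the Fréchet derivative $D'(v)[h]=4\re\langle(w*|v|^2)v,h\rangle_{L^2}$, and its continuity in $v$ follows from estimate \eqref{eq:dig} together with the embedding \eqref{eq.sob1} and the trivial $L^2(\R^2)\hookrightarrow H^{-1}_\alpha(\R^2)$.

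The quotient rule then yields
\[
(\mathcal{W}^{(w)}_{\alpha,\lambda})'(v)[h]\;=\;\frac{N'(v)[h]}{\sqrt{D(v)}}-\frac{N(v)\,D'(v)[h]}{2\,D(v)^{3/2}},
\]
which is continuous on $H^1_\alpha(\R^2)\setminus\{0\}$, establishing the $\mathcal{C}^1$-claim. At a minimiser $v_\alpha$, setting $(\mathcal{W}^{(w)}_{\alpha,\lambda})'(v_\alpha)=0$ and recalling that $\Lambda_{\alpha,\lambda}^{(w)}(v_\alpha)=N(v_\alpha)/D(v_\alpha)$ gives
\[
\re\big\langle(-\Delta_\alpha+\lambda)v_\alpha-\Lambda_{\alpha,\lambda}^{(w)}(v_\alpha)(w*|v_\alpha|^2)v_\alpha,\,h\big\rangle\;=\;0\qquad\forall h\in H^1_\alpha(\R^2).
\]
Replacing $h$ by $\ii h$ eliminates the real-part constraint, so the full $H^{-1}_\alpha$-pairing with arbitrary $h$ vanishes, which is precisely \eqref{eq.rsr1}.

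The main technical obstacle is the Fréchet differentiability of the quartic nonlocal form $D$, which relies on the full strength of the multilinear estimates of Lemma \ref{lem:Hnonlin-controlled} combined with the embeddings of Lemma \ref{lem:H1alpha-embedding}; in particular, one must carefully track the $L^{p_1}+L^{p_2}$ decomposition of $w$ and exploit the symmetry encoded by Lemma \ref{le:change.w} to identify the two cross terms of the linear expansion and collect them into a single gradient. Everything else (smoothness of $N$, positivity of $D$, passage from the vanishing of a real-linear functional to a complex identity in $H^{-1}_\alpha$) is routine.
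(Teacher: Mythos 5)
Your proposal is correct and follows essentially the same route as the paper's proof: split $\mathcal{W}^{(w)}_{\alpha,\lambda}$ into the quadratic form $N=L[\cdot]$ and the quartic functional $D$ (the paper's $4G$), compute the Fr\'echet derivative of $D$ via the expansion of $|v+h|^2$, the symmetry of $w$ (Lemma \ref{le:change.w}) and the quartic bound \eqref{quartic}, apply the quotient/Leibniz rule, and recover the complex $H^{-1}_\alpha$-identity from the vanishing real-linear derivative by testing against both $h$ and $\ii h$. The only (harmless) slip is describing the remainder $R(v,h)$ as ``cubic and quartic in $h$'' when it also contains quadratic terms, but your stated bound $|R(v,h)|\lesssim(\|v\|_{H^1_\alpha}^2+\|h\|_{H^1_\alpha}^2)\|h\|_{H^1_\alpha}^2$ is exactly the correct one, and your explicit positivity argument for $D(v)$, $v\neq 0$, supplies a detail the paper takes for granted.
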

\begin{proof}
Let us set $L:=-\Delta_{\alpha}+\lambda$, which is a positive, self-adjoint operator on $L^2(\R^2)$, with form domain $H_{\alpha}^1(\R^2)$. A direct computation using \eqref{eq:energynormH1alpha-bis} yields
$$L[v+h]-L[v]-2\,\mathfrak{Re}\langle L^{\frac12}v,L^{\frac12}h\rangle_{L^2}\;=\;\|h\|_{H_{\alpha}^1}^2+(e_{\alpha}+\lambda-1)\|h\|_{L^2}^2\;=\;o(\|h\|_{H_{\alpha}^1})$$
for every $v,h\in H_{\alpha}^1(\R^2)$, thereby implying that the functional $v\mapsto L[v]$ belongs to $\mathcal{C}^1(H^1_{\alpha}(\R^2),\R)$, with Fr\'echet derivative $D_v(L[\cdot])\in H_{\alpha}^{-1}(\R^2)$ given by
\begin{equation*}\tag{a}\label{derFL}
\big(D_v(L[\cdot])\big)(h)\;=\;2\,\mathfrak{Re}\langle L^{\frac12}v,L^{\frac12}h\rangle_{L^2}\,,\qquad h\in H_{\alpha}^1(\R^2)\,.
\end{equation*}
Next, consider the functional
$$G(v)\;:=\;\frac14\int_{\R^2} (w*|v|^2)|v|^2\,\ud x\,,\quad v\in H_{\alpha}^1(\R^2)\,.$$
Given $v,h\in H_{\alpha}^1(\R^2)$, a direct computation (using Lemma \ref{le:change.w}) and the bound \eqref{quartic} yield
\[
 \begin{split}
  & G(v+h)-G(v)-\int_{\R^2} (w*|v|^2)\,\mathfrak{Re}(v\overline{h})\,\ud x \\
  &\qquad =\;\int_{\R^2} \big(w*\mathfrak{Re}(v\overline{h})\big)\big(|h|^2+\mathfrak{Re}(v\overline{h})\big)\,\ud x \\
  &\qquad\qquad +\frac12\int_{\R^2}(w*|v|^2)|h|^2\,\ud x+\frac14\int_{\R^2}(w*|h|^2)|h|^2\,\ud x \\
  &\qquad\lesssim\;\|h\|_{H_{\alpha}^1}^2 \big(\|v\|_{H_{\alpha}^1}+\|h\|_{H_{\alpha}^1}\big)^2
 \end{split}
\]
and
\[
 \bigg|\int_{\R^2} (w*|v|^2)\,\mathfrak{Re}(v\overline{h})\,\ud x\bigg|\;\lesssim\,\|v\|_{H_{\alpha}^1}^3\|h\|_{H_{\alpha}^1}\,.
\]
This shows that the functional $G$ is of class $\mathcal{C}^1$ on $H_{\alpha}^1(\R^2)$, with Fr\'echet derivative $D_vG\in H_{\alpha}^{-1}(\R^2)$ given by
\begin{equation*}\tag{b}\label{derFG}
(D_vG)(h)\;=\;\int_{\R^2} (w*|v|^2)\,\mathfrak{Re}(v\overline{h})\,\ud x\,,\qquad h\in H_{\alpha}^1(\R^2)\,.
\end{equation*}
Since $\mathcal{W}^{(w)}_{\alpha,\lambda}(v)=\frac{1}{2}L[v]G(v)^{-\frac{1}{2}}$ and $G(v)$ does not vanish for non-zero $v$'s, then $\mathcal{W}^{(w)}_{\alpha,\lambda}$ is of class $\mathcal{C}^1$ on $H_{\alpha}^1(\R^2)\setminus\{0\}$, and the Leibniz rule, together with \eqref{derFL}, \eqref{derFG}, and the identity $\Lambda_{\alpha,\lambda}^{(w)}(v)=\frac14L[v]G(v)^{-1}$, yields
\begin{equation*}\tag{c}\label{eq:frederi}
\begin{split}
  \big(D_v\mathcal{W}^{(w)}_{\alpha,\lambda}\big)&(h) \;=\; \frac{1}{2}\big(D_v(L[\cdot])\big)(h)G(v)^{-\frac{1}{2}}-\frac{1}{4}L[v]G(v)^{-\frac{3}{2}}(D_vG)(h) \\
  & =\;G(v)^{-\frac{1}{2}}\Big(\mathfrak{Re}\langle L^{\frac12}v,L^{\frac12}h\rangle_{L^2}-\Lambda_{\alpha,\lambda}^{(w)}(v)\int_{\R^2} (w*|v|^2)\,\mathfrak{Re}(v\overline{h})\,\ud x\Big)\,.
\end{split}
\end{equation*}
Suppose now that $v_{\alpha}$ is a minimiser for the problem \eqref{200}. Then $v_{\alpha}$ is a critical point for the functional $\mathcal{W}^{(w)}_{\alpha,\lambda}$, which implies  $D_v\mathcal{W}^{(w)}_{\alpha,\lambda}\big|_{v=v_{\alpha}}\equiv 0$ and hence also the trivial identity
\begin{equation*}\tag{d}\label{polar_critical}
\big(D_v\mathcal{W}^{(w)}_{\alpha,\lambda}\big|_{v=v_{\alpha}}\big)(h)+\ii\big(D_v\mathcal{W}^{(w)}_{\alpha,\lambda}\,|_{v=v_{\alpha}}\big)(\ii h)\;=\;0\qquad\,\forall h\in H_{\alpha}^1(\R^2)\,.
\end{equation*}
Plugging \eqref{eq:frederi} into \eqref{polar_critical} yields
\begin{equation*}\tag{e}\label{form_intermedia}
	\begin{split}
	&\mathfrak{Re}\langle L^{\frac12}v_{\alpha},L^{\frac12}h\rangle_{L^2}+\ii\,\mathfrak{Re}\langle L^{\frac12}v_{\alpha},L^{\frac12}(\ii h)\rangle_{L^2}\\
&\;\;-\Lambda_{\alpha,\lambda}^{(w)}(v_{\alpha})\int_{\R^2}(w*|v_{\alpha}|^2)\big(\mathfrak{Re}(v_{\alpha}\overline{h})+\ii\,\mathfrak{Re}(v_{\alpha}\overline{\ii h})\big)\,\ud x\;=\;0\quad\forall h\in H_{\alpha}^1(\R^2)\,.
	\end{split}
\end{equation*}
On the other hand, on account of the fact that $-\Delta_{\alpha}$ is a real operator, and so too is therefore $L^{\frac12}$,
\[
 \langle L^{\frac12}v_{\alpha},L^{\frac12}h\rangle_{L^2}\;=\;\mathfrak{Re}\langle L^{\frac12}v_{\alpha},L^{\frac12}h\rangle_{L^2}+\ii\,\mathfrak{Re}\langle L^{\frac12}v_{\alpha},L^{\frac12}(\ii h)\rangle_{L^2}\,,
\]
and moreover,
\[
 v_{\alpha}\overline{h}\;=\;\mathfrak{Re}(v_{\alpha}\overline{h})+\ii\,\mathfrak{Re}(v_{\alpha}\overline{\ii h})\,.
\]
By means of the latter two identities, \eqref{form_intermedia} now gives
\begin{equation*}\tag{f}\label{form_EL}
\langle L^{\frac12}v_{\alpha},L^{\frac12}h\rangle_{L^2}-\Lambda_{\alpha,\lambda}^{(w)}(v_{\alpha})\int_{\R^2} (w*|v_{\alpha}|^2)\,v_{\alpha}\,\overline{h}\,\ud x\;=\;0\qquad\,\forall h\in H_{\alpha}^1(\R^2)\,.
\end{equation*}
The integral in \eqref{form_EL} can be re-interpreted as a duality product since, owing to \eqref{quartic},
$$\bigg|\int_{\R^2} (w*|v_{\alpha}|^2)\,v_{\alpha}\,\overline{h}\,\ud x\bigg|\;\lesssim\; \|v_{\alpha}\|_{H_{\alpha}^1}^3\|h\|_{H_{\alpha}^1}\,,$$
and moreover
\[
\begin{split}
  \langle L^{\frac12}v_{\alpha},L^{\frac12}h\rangle_{L^2}\;=\;L[v_\alpha,h]\;&=\;(-\Delta_\alpha)[v_\alpha,h]+\lambda\langle v_\alpha,h\rangle_{L^2} \\
  &=\;\langle (-\Delta_{\alpha}+\lambda)v_{\alpha} , h\rangle_{H_{\alpha}^{-1},H_{\alpha}^1}
\end{split}
\]
(as customary, $L[\cdot,\cdot]$ and $(-\Delta_\alpha)[\cdot,\cdot]$ are the sesquilinear forms associated, respectively, with the quadratic forms $L[\cdot]$ and $(-\Delta_\alpha)[\cdot]$ and obtained from the latter by polarisation), so that \eqref{form_EL} reads also
\[\tag{g}\label{eq:g-added}
\begin{split}
  \langle (-\Delta_{\alpha}+\lambda)v_{\alpha} , h\rangle_{H_{\alpha}^{-1},H_{\alpha}^1}&-\Lambda_{\alpha,\lambda}^{(w)}(v_{\alpha})\langle(w*|v_{\alpha}|^2)v_{\alpha},h\rangle_{H_{\alpha}^{-1},H_{\alpha}^1}\;=\;0 \\
  &\forall h\in H_{\alpha}^1(\R^2)\,.
\end{split}
\]
In other words, the \emph{anti-linear} maps
\[
 \begin{split}
  H^1_\alpha(\mathbb{R}^2)\ni h&\mapsto\langle L^{\frac12}v_{\alpha},L^{\frac12}h\rangle_{L^2}\,, \\
  H^1_\alpha(\mathbb{R}^2)\ni h&\mapsto\int_{\R^2} (w*|v_{\alpha}|^2)\,v_{\alpha}\,\overline{h}\,\ud x
 \end{split}
\]
are canonically identified, as elements in $H^{-1}_\alpha(\mathbb{R}^2)$, respectively with $(-\Delta_{\alpha}+\lambda)v_{\alpha}$ and $(w*|v_{\alpha}|^2)v_{\alpha}$, via $(-\Delta_{\alpha}+\lambda)v_{\alpha}\cong\langle (-\Delta_{\alpha}+\lambda)v_{\alpha},\cdot\rangle_{H_{\alpha}^{-1},H_{\alpha}^1}$ and $(w*|v_{\alpha}|^2)v_{\alpha}\cong\langle (w*|v_{\alpha}|^2)v_{\alpha},\cdot\rangle_{H_{\alpha}^{-1},H_{\alpha}^1}$. Now, \eqref{eq:g-added} is precisely \eqref{eq.rsr1} meant as an identity in $H_{\alpha}^{-1}(\R^2)$.
\end{proof}

Next, we establish the $H_{\alpha}^2$-regularity of the minimisers.

\begin{lemma}\label{le:H2}
Let $\alpha\in\mathbb{R}$ and $\lambda>|e_{\alpha}|$, and let $w$ satisfy \eqref{ass_w}. Suppose that $v_{\alpha}$ is a $H^1_\alpha$-minimiser for the problem \eqref{200}. Then $v_{\alpha}\in H^2_{\alpha}(\mathbb{R}^2)\setminus\{0\}$. In particular,
\[
v_{\alpha}=f_{\alpha}+\frac{f_{\alpha}(0)}{\beta_{\alpha}(\lambda)}\,\mathcal{G}_{\lambda}
\]
for some $f_{\alpha}\in H^2(\R^2)\setminus\{0\}$.
\end{lemma}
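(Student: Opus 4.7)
The plan is to bootstrap regularity directly from the Euler--Lagrange equation \eqref{eq.rsr1}. Reading that identity as
\[
(-\Delta_{\alpha}+\lambda)v_{\alpha}\;=\;\Lambda_{\alpha,\lambda}^{(w)}(v_{\alpha})\,(w*|v_{\alpha}|^2)v_{\alpha}
\]
in $H_{\alpha}^{-1}(\mathbb{R}^2)$, the idea is to upgrade the right-hand side to an $L^2$-function: once this is done, the identity can be inverted through the resolvent $(-\Delta_{\alpha}+\lambda)^{-1}$, which, since $\lambda>|e_{\alpha}|$ places $-\lambda$ in the resolvent set of $-\Delta_{\alpha}$ (whose spectrum is $\{e_{\alpha}\}\cup[0,\infty)$), is a bounded bijection from $L^2(\mathbb{R}^2)$ onto $\mathcal{D}(-\Delta_{\alpha})=H^2_{\alpha}(\mathbb{R}^2)$.

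To verify the $L^2$-integrability of the Hartree term, I would split $w=w_1+w_2$ with $w_j\in L^{p_j}(\mathbb{R}^2)$, $p_j\in[1,\infty)$, as permitted by \eqref{ass_w}, and apply the estimate \eqref{eq:L2Hartree} from Lemma \ref{lem:Hnonlin-controlled}(i) to each summand separately, obtaining
\[
\bigl\|(w*|v_{\alpha}|^2)v_{\alpha}\bigr\|_{L^2}\;\lesssim\;\bigl(\|w_1\|_{L^{p_1}}+\|w_2\|_{L^{p_2}}\bigr)\,\|v_{\alpha}\|_{H^1_{\alpha}}^{3}\;<\;+\infty,
\]
since $v_{\alpha}\in H^1_{\alpha}(\mathbb{R}^2)$ by assumption. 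Feeding this back into the inverted Euler--Lagrange equation yields $v_{\alpha}\in H^2_{\alpha}(\mathbb{R}^2)$.

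The structural statement then follows at once: by the canonical description \eqref{eq:op_dom} of $\mathcal{D}(-\Delta_{\alpha})$ with $\omega=\lambda$, one has the unique decomposition
\[
v_{\alpha}\;=\;f_{\alpha}+\frac{f_{\alpha}(0)}{\beta_{\alpha}(\lambda)}\,\mathcal{G}_{\lambda}
\]
for some $f_{\alpha}\in H^2(\mathbb{R}^2)$, and non-triviality of $f_{\alpha}$ is immediate: if $f_{\alpha}\equiv 0$, then $f_{\alpha}(0)=0$ and therefore $v_{\alpha}=0$, contradicting the fact that every minimiser of \eqref{200} is by definition non-zero. I do not anticipate any conceptual obstacle; the only care required is the bookkeeping to reduce the $L^{p_1}+L^{p_2}$ structure of $w$ to a pure $L^p$-estimate, which is routine once \eqref{eq:L2Hartree} is available. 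The whole argument is, in essence, an elliptic regularity bootstrap performed through the resolvent formula \eqref{eq:res_formula}.
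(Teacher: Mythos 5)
Your argument is correct and is essentially the paper's proof: both hinge on the observation that $(w*|v_{\alpha}|^2)v_{\alpha}\in L^2(\mathbb{R}^2)$ via \eqref{eq:L2Hartree} (after splitting $w=w_1+w_2$) and on inverting $-\Delta_{\alpha}+\lambda$, which is legitimate since $\lambda>|e_{\alpha}|$ places $-\lambda$ in the resolvent set. The only difference is technical: the paper justifies the upgrade of the $H_{\alpha}^{-1}$-identity \eqref{eq.rsr1} to an $L^2$-identity by means of the Yosida approximation $v_{\alpha}^{(\varepsilon)}=(1+\varepsilon(-\Delta_{\alpha}+\lambda))^{-1}v_{\alpha}$ and a limit $\varepsilon\downarrow 0$, whereas you appeal directly to the standard characterisation of the operator domain as those $v\in H_{\alpha}^1$ for which $(-\Delta_{\alpha}+\lambda)v$, taken in the form/duality sense, lies in $L^2$ --- both routes are valid.
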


\begin{proof}
Let $\varepsilon>0$. The Yosida approximation
$$v_{\alpha}^{(\varepsilon)} \;:=\; (1+\varepsilon(-\Delta_{\alpha} +\lambda))^{-1} v_{\alpha}\,\in\, H_{\alpha}^2(\mathbb{R}^2)$$
satisfies, by construction,
\[
 v_{\alpha}^{(\varepsilon)}\,\xrightarrow{\;\varepsilon\downarrow 0\;}\,v_\alpha\qquad\textrm{in }L^2(\mathbb{R}^2)
\]
and satisfies also, on account of \eqref{eq.rsr1},
\begin{equation*}\tag{*}\label{eq:L2H2}
	(-\Delta_{\alpha} +\lambda)\,v_{\alpha}^{(\varepsilon)}  \;=\;\Lambda_{\alpha,\lambda}^{(w)}(v_{\alpha})\,  (1+\varepsilon(-\Delta_{\alpha} +\lambda))^{-1}( (w * |v_{\alpha}|^2) v_{\alpha} )\,.
\end{equation*}
Here \eqref{eq:L2H2} would be meant a priori in the $H^{-1}_\alpha$-sense, because \eqref{eq.rsr1} holds in $H^{-1}_\alpha(\mathbb{R}^2$); however,  $(w * |v_{\alpha}|^2) v_{\alpha} \in L^2(\mathbb{R}^2)$, owing to \eqref{eq:L2Hartree}, therefore both sides of \eqref{eq:L2H2} actually belong to $L^2(\mathbb{R}^2)$. Moreover, as $\varepsilon\downarrow 0$, the r.h.s.~of \eqref{eq:L2H2} $L^2$-converges to $\Lambda_{\alpha,\lambda}^{(w)}(w * |v_{\alpha}|^2) v_{\alpha}$, thus implying
\[
 (-\Delta_{\alpha} +\lambda)\,v_{\alpha}^{(\varepsilon)}\,\xrightarrow{\;\varepsilon\downarrow 0\;}\,\Lambda_{\alpha,\lambda}^{(w)}(v_{\alpha})(w * |v_{\alpha}|^2) v_{\alpha}\qquad\textrm{in }L^2(\mathbb{R}^2)\,.
\]
We then deduce that $v_\alpha^{(\varepsilon)}$ converges in $H^2_\alpha(\mathbb{R}^2)$, and its $H^2_\alpha$-limit must necessarily coincide with its $L^2$-limit $v_\alpha$ above. The conclusion is that $v_\alpha\in H^2_\alpha(\mathbb{R}^2)$. In turn, this fact implies the canonical decomposition of $v_\alpha$ stated in the Lemma.
\end{proof}

 We finally come to demonstrating the remaining properties of ground states, which then completes the proof of Theorem \ref{vs:10}.  Observe that, on account of Lemma \ref{le:H2}, we can restrict the analysis to $H_{\alpha}^2$-minimisers.

\begin{proposition}\label{pr:machi}
	Let $\alpha\in\mathbb{R}$ and $\lambda>|e_{\alpha}|$, assume that $w$ satisfy \eqref{ass_w}, and let
	$$v_{\alpha}\;=\;f_{\alpha}+\frac{f_{\alpha}(0)}{\beta_{\alpha}(\lambda)}\,\mathcal{G}_{\lambda}$$
	be a minimiser for the problem \eqref{200}, for some $f_{\alpha}\in H^2(\R^2)\setminus\{0\}$. Then:
	\begin{enumerate}
	 \item[(i)] $|f_{\alpha}|$ is spherically symmetric, strictly positive, and strictly radially decreasing in $|x|$;
\item[(ii)] $v_{\alpha}$ has constant phase, namely,
	\begin{equation}\label{vcop}
		v_{\alpha}\;=\; e^{\ii\theta_{\alpha}}\Big(\,|f_{\alpha}|  +\frac{|f_{\alpha} (0)|}{\beta_{\alpha} (\lambda)} \mathcal{G}_\lambda\Big)
		\end{equation}
for some $\theta_{\alpha}\in[0,2\pi)$.
	\end{enumerate}
\end{proposition}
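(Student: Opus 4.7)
The plan is to leverage Corollary \ref{cor:loweringWeinstein} in full. Setting $c_\alpha:=f_\alpha(0)/\beta_\alpha(\lambda)$, I introduce the two auxiliary candidates
\[
u^{(1)}\,:=\,|f_\alpha|+|c_\alpha|\,\mathcal{G}_\lambda\,,\qquad u^{(2)}\,:=\,|f_\alpha|^*+|c_\alpha|\,\mathcal{G}_\lambda\,,
\]
both of which, by that Corollary and the minimality of $v_\alpha$, minimise \eqref{200}. Along the chain $\mathcal{W}^{(w)}_{\alpha,\lambda}(u^{(2)})\leqslant\mathcal{W}^{(w)}_{\alpha,\lambda}(u^{(1)})\leqslant\mathcal{W}^{(w)}_{\alpha,\lambda}(v_\alpha)$ the numerator only decreases (via $\|\nabla|f_\alpha|\|_{L^2}\leqslant\|\nabla f_\alpha\|_{L^2}$ and the P\'{o}lya--Szeg\H{o} inequality), while the denominator only increases (via the pointwise bound $|v_\alpha|\leqslant u^{(1)}$ and Lemma \ref{lbl12}). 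Equality of the ratios under these sign-definite monotonicities forces numerator and denominator to coincide individually at each step, yielding the crucial identities $\|\nabla f_\alpha\|_{L^2}=\|\nabla|f_\alpha|\|_{L^2}=\|\nabla|f_\alpha|^*\|_{L^2}$ together with the analogous equalities for the Hartree integrals.

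Next I would analyse $u^{(2)}$. By Lemma \ref{le:H2} it lies in $H^2_\alpha(\mathbb{R}^2)$, so $|f_\alpha|^*\in H^2(\mathbb{R}^2)$; combining \eqref{eq.rsr1} with \eqref{eq:opaction}, the Euler--Lagrange equation reduces on the regular part to
\[
(-\Delta+\lambda)|f_\alpha|^*\,=\,\Lambda_{\alpha,\lambda}^{(w)}(u^{(2)})\,(w*(u^{(2)})^2)\,u^{(2)}\qquad \text{in }L^2(\mathbb{R}^2)\,.
\]
The right-hand side is non-negative and not identically zero, so inversion through the strictly positive radial kernel $\mathcal{G}_\lambda$ yields $|f_\alpha|^*>0$ everywhere; equimeasurability and the embedding $H^2(\mathbb{R}^2)\hookrightarrow C(\mathbb{R}^2)$ then force $|f_\alpha|>0$ pointwise, and in particular $c_\alpha\neq 0$. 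Strict radial decrease of $|f_\alpha|^*$ would be obtained by contradiction: were it constant equal to some $k>0$ on an annulus $A$, the equation on $A$ would read $\lambda k=\Lambda_{\alpha,\lambda}^{(w)}(u^{(2)})(w*(u^{(2)})^2)(x)(k+|c_\alpha|\mathcal{G}_\lambda(x))$, whose right-hand side is the product of a radial non-increasing factor (Riesz rearrangement applied to $w$ and $(u^{(2)})^2$, both radial non-increasing) with a strictly decreasing one, hence strictly decreasing; this contradicts the constant left-hand side.

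The transfer back from $|f_\alpha|^*$ to $|f_\alpha|$ is where I expect the main obstacle. The equality $\|\nabla|f_\alpha|\|_{L^2}=\|\nabla|f_\alpha|^*\|_{L^2}$ together with the strict radial decrease of $|f_\alpha|^*$ just obtained (which secures the Brothers--Ziemer non-degeneracy condition, since radial level sets have vanishing planar measure) gives $|f_\alpha|(\cdot)=|f_\alpha|^*(\,\cdot\,-x_0)$ for some $x_0\in\mathbb{R}^2$. To pin down $x_0=0$ I would compare the Euler--Lagrange equations for $u^{(1)}$ and $u^{(2)}$: translating the one for $|f_\alpha|^*$ by $x_0$ and matching with the one for $|f_\alpha|$ produces the a.e.~identity
\[
(w*(u^{(1)})^2)(x)\,u^{(1)}(x)\,=\,(w*(u^{(2)})^2)(x-x_0)\,u^{(2)}(x-x_0)\,.
\]
As $x\to x_0$ the right-hand side blows up logarithmically through $\mathcal{G}_\lambda(x-x_0)\to+\infty$, the lower bound $(w*(u^{(2)})^2)(y)\geqslant c>0$ for bounded $|y|$ being standard (use that $|f_\alpha|^*$ has a positive lower bound on compacts and that $w$ is positive near the origin by \eqref{ass_w}); for $x_0\neq 0$ the left-hand side instead stays bounded near $x_0$. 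This contradiction forces $x_0=0$ and completes part (i). The delicate point is precisely that the logarithmic singularity of $\mathcal{G}_\lambda$ at the origin is the only symmetry-breaking feature available to pin down the translation parameter.

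Finally, for (ii), starting from the equality $\|\nabla f_\alpha\|_{L^2}=\|\nabla|f_\alpha|\|_{L^2}$ and using $|f_\alpha|>0$ together with $f_\alpha\in H^2(\mathbb{R}^2)\subset C(\mathbb{R}^2)$, I would perform a global polar lift $f_\alpha=|f_\alpha|\,e^{\ii\phi}$ with a continuous $\phi:\mathbb{R}^2\to\mathbb{R}$ (available since $\mathbb{R}^2$ is simply connected and $|f_\alpha|$ is nowhere vanishing). The chain-rule identity $|\nabla f_\alpha|^2=|\nabla|f_\alpha||^2+|f_\alpha|^2|\nabla\phi|^2$ integrated over $\mathbb{R}^2$ then forces $\nabla\phi\equiv 0$, whence $\phi\equiv\theta_\alpha$ is constant. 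Consequently $c_\alpha=e^{\ii\theta_\alpha}|c_\alpha|$, and the canonical representation of $v_\alpha$ collapses to the advertised form \eqref{vcop}.
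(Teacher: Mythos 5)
Your overall architecture matches the paper's: the symmetrisation chain via Corollary \ref{cor:loweringWeinstein}, the forced equalities of gradients and Hartree integrals, the annulus contradiction in the Euler--Lagrange equation for the symmetrised minimiser, a rearrangement-rigidity theorem to get $|f_\alpha|(\cdot)=|f_\alpha|^*(\cdot-x_0)$, and the polar-lift argument for (ii). Two points, however, deserve attention.

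The genuine gap is your derivation of $c_\alpha\neq 0$. You claim that $|f_\alpha|^*>0$ everywhere plus ``equimeasurability'' forces $|f_\alpha|>0$ pointwise. This inference is false: equimeasurability only identifies the measures of the superlevel sets $\{|f_\alpha|>t\}$ and $\{|f_\alpha|^*>t\}$ for $t>0$, and since $\{|f_\alpha|>0\}$ and $\mathbb{R}^2$ both have infinite measure, nothing prevents $|f_\alpha|$ from vanishing on a set of positive finite measure (or at the single point $x=0$, which is all you need to lose) while $|f_\alpha|^*$ is everywhere positive. The point matters because $c_\alpha\neq 0$ is indispensable twice in your own argument: the strict-decrease contradiction on the annulus uses that $u^{(2)}$ contains a genuinely singular, strictly decreasing term $|c_\alpha|\mathcal{G}_\lambda$, and your mechanism for pinning down $x_0=0$ is precisely the logarithmic blow-up of that term. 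The correct route --- the one the paper takes --- is structural: since $u^{(2)}=|f_\alpha|^*+|c_\alpha|\mathcal{G}_\lambda$ is itself a minimiser, Lemma \ref{le:H2} places it in $H^2_\alpha(\mathbb{R}^2)$, and the uniqueness of the decomposition into an $H^2$ part plus a multiple of $\mathcal{G}_\lambda$ forces $|c_\alpha|=|f_\alpha|^*(0)/\beta_\alpha(\lambda)$, i.e.\ $|f_\alpha|(0)=|f_\alpha|^*(0)$; and $|f_\alpha|^*(0)=\||f_\alpha|^*\|_{L^\infty}>0$ because $|f_\alpha|^*$ is radially non-increasing and not identically zero. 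You have all the ingredients (you do invoke Lemma \ref{le:H2}) but never extract the coefficient identity, and the substitute you offer does not work.

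Granting that repair, your way of concluding $x_0=0$ diverges from the paper's and is considerably heavier. The paper simply observes that the same identity $|f_\alpha|(0)=|f_\alpha|^*(0)$, combined with the strict radial decrease of $|f_\alpha|^*$, gives $|f_\alpha|(0)=|f_\alpha|^*(-x_0)<|f_\alpha|^*(0)=|f_\alpha|(0)$ whenever $x_0\neq0$ --- one line. Your comparison of the two translated Euler--Lagrange equations and the logarithmic singularity of $\mathcal{G}_\lambda$ can be made to work (one must also check a uniform positive lower bound for $w*(u^{(2)})^2$ near the origin and a uniform upper bound for $w*(u^{(1)})^2$, both routine under \eqref{ass_w}), but it is an unnecessary detour once the coefficient identity is in hand. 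Finally, note that the paper deliberately cites Burchard--Ferone rather than Brothers--Ziemer for the rigidity step: the classical Brothers--Ziemer theorem is stated for compactly supported functions, so your appeal to it would require an additional localisation argument (see the Remark following Proposition \ref{pr:machi}); the non-degeneracy condition $|\mathcal{B}\cap\mathcal{C}|=0$ that you verify is necessary but not by itself sufficient to quote that result directly.
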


\begin{proof}
(i) On account of Corollary \ref{cor:loweringWeinstein}, and since $v_\alpha$ is a minimiser,
\begin{equation*}\tag{a}\label{eq:allequal}
 \begin{split}
  &\mathcal{W}^{(w)}_{\alpha,\lambda}\Big(|f_{\alpha}|^*+\frac{|f_{\alpha}|(0)}{\beta_{\alpha}(\lambda)}\,\mathcal{G}_\lambda\Big) \;=\;\mathcal{W}^{(w)}_{\alpha,\lambda}\Big(|f_{\alpha}|+\frac{|f_{\alpha}|(0)}{\beta_{\alpha}(\lambda)}\,\mathcal{G}_\lambda\Big) \\
  &\qquad\qquad\;=\;\mathcal{W}^{(w)}_{\alpha,\lambda}\Big(f_{\alpha}+\frac{f_{\alpha}(0)}{\beta_{\alpha}(\lambda)}\,\mathcal{G}_\lambda\Big)\,,
 \end{split}
  \end{equation*}
that is, $|f_{\alpha}|^*+\frac{|f_{\alpha}|(0)}{\beta_{\alpha}(\lambda)}\,\mathcal{G}_{\lambda}$ and $|f_{\alpha}|+\frac{|f_{\alpha}|(0)}{\beta_{\alpha}(\lambda)}\,\mathcal{G}_{\lambda}$ are minimisers as well (recall that $\beta_{\alpha}(\lambda)>0$ for $\lambda>|e_{\alpha}|$). As such, they both belong to $H^2_\alpha(\mathbb{R}^2)\setminus\{0\}$ (Lemma \ref{le:H2}), meaning that $|f_{\alpha}|,|f_{\alpha}|^*\in H^2(\R^2)\setminus\{0\}$ and $|f_{\alpha}|^*(0)=|f_{\alpha}|(0)$. In particular, the function $|f_{\alpha}|^*+\frac{|f_{\alpha}|^*(0)}{\beta_{\alpha}(\lambda)}\,\mathcal{G}_{\lambda}$ is a minimiser too. Moreover, $|f_{\alpha}|^*(0)>0$, otherwise the radial decrease in $|x|$ of $|f_{\alpha}|^*$ would imply $|f_{\alpha}|^*\equiv 0$, a contradiction.

 Now, the first identity in \eqref{eq:allequal} reads
\[
 \frac{\,\|\nabla|f_\alpha|^*\|_{L^2}^2+\lambda\,\|\,|f_\alpha|^*\,\|_{L^2}^2\,}{\mathfrak{g}(|f_\alpha|^*)}\;=\;\frac{\,\|\nabla|f_\alpha|\|_{L^2}^2+\lambda\,\|\,|f_\alpha|\,\|_{L^2}^2\,}{\mathfrak{g}(|f_\alpha|)}\,,
\]
where we used $|f_{\alpha}|^*(0)=|f_{\alpha}|(0)$, \eqref{eq:opaction} for the numerators, and the short-hand
\[
 \mathfrak{g}(\varphi)\;:=\;
 \bigg(\int_{\mathbb{R}^2}  \Big( w*\Big|\varphi+\frac{\varphi(0)}{\beta_{\alpha}(\lambda)}\,\mathcal{G}_\lambda\Big|^2\Big)\Big|\varphi+\frac{\varphi(0)}{\beta_{\alpha}(\lambda)}\,\mathcal{G}_\lambda\Big|^2\,\ud x\bigg)^{\!\frac{1}{2}}
\]
for the denominators. In the latter identity, clearly, $\|\,|f_\alpha|^*\,\|_{L^2}=\|\,|f_\alpha|\,\|_{L^2}$, and moreover $\mathfrak{g}(|f_\alpha|)\leqslant\mathfrak{g}(|f_\alpha|^*)$ (as a consequence of Lemma \ref{lbl12} and the property $|f_{\alpha}|^*(0)=|f_{\alpha}|(0)$); therefore, necessarily $\|\,\nabla|f_\alpha|^*\|_{L^2}\geqslant \|\,\nabla|f_\alpha|\,\|_{L^2}$, which implies, for compatibility with the  P\'{o}lya-Szeg\H{o} inequality \eqref{eq:PSz},
\[\tag{b}\label{eq:PSidentity}
 \|\nabla|f_\alpha|^*\|_{L^2}\;=\; \|\nabla|f_\alpha|\,\|_{L^2}\,.
\]

On the other hand, the Euler-Lagrange equation \eqref{eq.rsr1} for the minimiser $|f_{\alpha}|^*+\frac{|f_{\alpha}|^*(0)}{\beta_{\alpha}(\lambda)}\,\mathcal{G}_{\lambda}$ reads
\[\tag{c}\label{eq:ELspecial}
 -\Delta|f_\alpha|^*+\lambda|f_\alpha|^*\;=\;\Big( w*\Big(|f_\alpha|^*+\frac{|f_\alpha|^*(0)}{\beta_{\alpha}(\lambda)}\,\mathcal{G}_\lambda\Big)^2\Big)\Big(|f_\alpha|^*+\frac{|f_\alpha|^*(0)}{\beta_{\alpha}(\lambda)}\,\mathcal{G}_\lambda\Big)\,\widetilde{\Lambda}\,,
\]
having set $\widetilde{\Lambda}:=\Lambda_{\alpha,\lambda}^{(w)}\big(|f_{\alpha}|^*+\frac{|f_{\alpha}|^*(0)}{\beta_{\alpha}(\lambda)}\,\mathcal{G}_{\lambda}\big)>0$ and used again \eqref{eq:opaction} in the l.h.s.

Adapting the reasoning from \cite[Lemma 4.5]{Fukaya-Georgiev-Ikeda-2021}, we see that if, for contradiction, $|f_\alpha|^*$ was not \emph{strictly} radially decreasing, and instead was constant on some annulus $0\leqslant r_1<|x|<r_2$, then on such a region \eqref{eq:ELspecial} would imply
\[
 \textrm{constant}\;=\;|f_\alpha|^*\;=\;\textrm{r.h.s.~of \eqref{eq:ELspecial}}\;=\;\textrm{radially strictly decreasing function}\,.
\]
Indeed, in the r.h.s.~of \eqref{eq:ELspecial} $|f_\alpha|^*+\frac{|f_\alpha|^*(0)}{\beta_{\alpha}(\lambda)}\,\mathcal{G}_\lambda$ is \emph{strictly} decreasing with $|x|$ (to claim this, it is crucial that $|f_\alpha|^*(0)>0$), and $w*\big(|f_\alpha|^*+\frac{|f_\alpha|^*(0)}{\beta_{\alpha}(\lambda)}\,\mathcal{G}_\lambda\big)^2$ is non-increasing with $|x|$. To avoid the above contradiction, necessarily $|f_\alpha|^*$ is strictly radially decreasing. As a consequence, setting
\[
 \begin{split}
   \mathcal{B}\,&:=\,\{x\in\mathbb{R}^2\,|\,0<|f_\alpha|^*(x)<\||f_\alpha|^*\|_{L^\infty}=|f_\alpha|^*(0)\}\,,  \\
  \mathcal{C}\,&:=\,\{x\in\mathbb{R}^2\,|\,\nabla|f_\alpha|^*(x)=0\}\,,
 \end{split}
\]
 one has $|\mathcal{C}|=0$ (in the sense of the Lebesgue measure $|\cdot|$) and therefore also
 \[\tag{d}\label{eq:zeromeasure}
 \big|\mathcal{B}\cap \mathcal{C}\big|\;=\;0\,.
\]
%
%
An obvious further consequence of the strict radial decrease of $|f_\alpha|^*$ is that $|f_\alpha|^*$ is also strictly positive.

 Condition \eqref{eq:zeromeasure} above, the non-negativity of $|f_\alpha|^*$, and its vanishing at infinity in the sense of finite measure of the $t$-level sets $\{x\in\mathbb{R}^2\,|\,|f_\alpha|^*(x)>t\}$ for all $t>0$ (which follows from $|f_\alpha|^*\in H^2(\mathbb{R}^2)\supset L^2(\mathbb{R}^2)$), are precisely the hypotheses of \cite[Theorem 1.1]{Burchard-Ferone-2015}, which then implies
 the existence of $x_0\in\mathbb{R}^2$ such that $|f_\alpha|(x)=|f_\alpha|^*(x-x_0)$ for \emph{every} $x\in\mathbb{R}^2$ (recall that both $|f_\alpha|$ and $|f_\alpha|^*$ are \emph{continuous}, being two $H^2$-functions). The property $|f_{\alpha}|(0)=|f_{\alpha}|^*(0)$ and the strict radial decrease of $|f_\alpha|^*$ then yield
\[
 |f_\alpha|(0)\,=\,|f_\alpha|^*(-x_0)\,<\,|f_\alpha|^*(0)\,=\,|f_\alpha|(0)\qquad \textrm{when }\;x_0\neq 0\,,
\]
a contradiction. Therefore, $x_0=0$. We thus conclude that $|f_\alpha|=|f_\alpha|^*$. Since $|f_\alpha|^*$ is spherically symmetric, strictly positive, and strictly radially decreasing, so too is then $|f_\alpha|$. Part (i) is thus proved.

(ii) The second identity in \eqref{eq:allequal} (on account of \eqref{eq:opaction}) reads
\[
 \frac{\,\|\nabla|f_\alpha|\|_{L^2}^2+\lambda\|f_\alpha\|_{L^2}^2\,}{\mathfrak{g}(|f_\alpha|)}\;=\;\frac{\,\|\nabla f_\alpha\|_{L^2}^2+\lambda\|f_\alpha\|_{L^2}^2\,}{\mathfrak{g}(f_\alpha)}\,.
\]
In view of the estimates $ \|\nabla|f_\alpha|\|_{L^2}\leqslant\|\nabla f_\alpha\|_{L^2}$ and $\mathfrak{g}(|f_\alpha|)\geqslant\mathfrak{g}(f_\alpha)$, we then necessarily have
\[\tag{e}\label{eq:equalgradients}
 \|\nabla|f_\alpha|\|_{L^2}\;=\;\|\nabla f_\alpha\|_{L^2}\,.
\]
Re-writing  $f_{\alpha}(x)=|f_{\alpha}(x)|e^{\ii\theta_{\alpha}(x)}$ (the phase $\theta_{\alpha}$ is well-defined since $|f_{\alpha}|>0$), and using $|\nabla f_\alpha|^2=|\nabla|f_\alpha||^2+|f_{\alpha}|^2|\nabla\theta_\alpha|^2$, condition \eqref{eq:equalgradients} and the strict positivity of $|f_{\alpha}|$ imply $\nabla\theta_{\alpha}=0$, i.e., $\theta_\alpha$ is constant (and real). We then have
$$v_{\alpha}\;=\;f_{\alpha}  +\frac{f_{\alpha} (0)}{\beta_{\alpha} (\lambda)} \mathcal{G}_\lambda\;=\;e^{i\theta_{\alpha}}\Big(|f_{\alpha}|  +\frac{|f_{\alpha} (0)|}{\beta_{\alpha} (\lambda)} \mathcal{G}_\lambda\Big)\,,$$
which establishes \eqref{vcop} and concludes the proof.
\end{proof}

 \begin{remark}
  In the above proof of Proposition \ref{pr:machi}(i), we could have deduced the translation property $|f_\alpha|(x)=|f_\alpha|^*(x-x_0)$ also appealing to the classical result \cite[Theorem~1.1]{Brothers-Ziemer-1988}, of which \cite[Theorem 1.1]{Burchard-Ferone-2015} that we used here is a later generalisation. Indeed, since $|\mathcal{C}|=0$, then also $|\mathcal{C}^*|=0$ ($\mathcal{C}^*$ is the symmetric re-arrangement of the set $\mathcal{C}$), whence $|\mathcal{B}\cap\mathcal{C}^*|=0$. The latter fact and condition \eqref{eq:PSidentity} are the hypotheses of \cite[Theorem~1.1]{Brothers-Ziemer-1988} (picking $u=|f_\alpha|$, $A(\xi)=\xi^2$, and $p=4$, in the notation therein), which implies the translation property above, modulo the fact that $u=|f_\alpha|$ does \emph{not} have compact support here. Thus, the fair application of \cite[Theorem~1.1]{Brothers-Ziemer-1988} would have required an additional check that one can indeed go back to the original assumptions (compactness of the support) prescribed in \cite[Theorem~1.1]{Brothers-Ziemer-1988} by smooth localisation of $|f_\alpha|$ over closed balls, an analysis on which we omit the detail. Clearly, \cite[Theorem~1.1]{Brothers-Ziemer-1988} by-passes all that.
 \end{remark}

\section{Local well-posedness in the energy space}\label{sec:LWP}

In this Section we establish the local well-posedness in $H^1_\alpha(\mathbb{R}^2)$ of the initial value problem \eqref{cp:shmain}, as stated in Theorem \ref{th:main_ex}. Here focusing and defocusing case are treated on an equal footing: their different nature only emerges in the long-time behaviour.

 In preparation for that, let us concisely review the abstract solution theory in the energy space for a class of semi-linear Schr\"{o}dinger equations. The result imported in Proposition \ref{pr:ass} below is based on a suitable generalisation, proposed in \cite{Okazawa-Suzuki-Yokota-2012}, of the Cazenave existence method \cite[Section 3.3]{cazenave}.

Let $S$ be a lower semi-bounded self-adjoint operator on a Hilbert space $X$, with greatest lower bound $\mathfrak{m}(S)$. The associated form domain is the energy space
$$X_S\;:=\;\mathcal{D}((1-\mathfrak{m}(S)+S)^{\frac{1}{2}})\,,$$
a Hilbert space whose scalar product is induced by the norm
\[
 \|\psi\|_{X_S}\;:=\;\big\|(1-\mathfrak{m}(S)+S)^{\frac{1}{2}}\psi\big\|_{X}\,.
\]
$X_S^*$ shall denote, as customary, the topological dual of $X_S$, thus itself equipped with a natural Hilbert space structure, in particular with Hilbert norm $\|\cdot\|_{X_S^*}$. For relevant functions $g:X_S\to X_S^*$ let us consider the solution theory in $X_S$ for the semi-linear equation
\begin{equation}\label{gen_NLS}
	\ii\partial_t \psi\;=\;S \psi+g(\psi)\,.
	\end{equation}
(We write $g(\psi)$ instead of $g\psi$ to emphasise that $g$ is not necessarily linear.)
To this aim, one poses the following assumptions on $g$.
\begin{enumerate}
 \item[\textbf{(G1)}] $g$ is Lipschitz continuous on bounded sets of $X_S$.
 \item[\textbf{(G2)}] $g=G'$ for some $G\in C^1(X_S,\R)$, understanding $G'$ as computed with respect to the \emph{real} Hilbert space duality structure of $X_S$ and $X_S^*$, that is,
 \[
  \begin{split}
   & \forall \psi\in X_S\,\;\forall\varepsilon>0\,\;\;\exists\,\delta_{\psi,\varepsilon}>0 \;\;\textrm{such that}     \\
   &\big|G(\psi+\xi)-G(\psi)-\mathfrak{Re}\big(\langle g(\psi),\xi\rangle_{X_S^*,X_S}\big)\big|\,\leqslant\,\varepsilon\|\xi\|_{X_S} \\
   &\forall \xi\in X_S\;\textrm{ with }\; \|\xi\|_{X_S}\,\leqslant\,\delta_{\psi,\varepsilon}\,,
  \end{split}
 \]
  where $\langle\cdot,\cdot\rangle_{X_S^*,X_S}$ denotes the (complex-valued) duality pairing between $X_S^*$ and $X_S$, with the convention of being linear in the first entry and anti-linear in the second.
 \item[\textbf{(G3)}] $G$ satisfies
 \[
  \begin{split}
  & \forall\,\delta,M>0\;\;\exists\,C_{\delta,M}>0\;\;\textrm{such that} \\
  & |G(\psi_1)-G(\psi_2)|\;\leqslant\;\delta+C_{\delta,M}\|\psi_1-\psi_2\|_{X} \\
  & \forall\, \psi_1,\psi_2\in X_S\,\textrm{ with }\,\|\psi_1\|_{X_S},\|\psi_2\|_{X_S}\,\leqslant\, M\,.
  \end{split}
 \]
 \item[\textbf{(G4)}] $g$ satisfies $\mathfrak{Im}\big(\langle g(\psi),\psi\rangle_{X_S^*,X_S}\big)=0$.
 \item[\textbf{(G5)}] $g$ is a weakly closed map, i.e., for any $\psi$ and any sequence $(\psi_n)_{n\in\N}$ in $X_S$ satisfying
 \[
  \begin{cases}
   \;\;\;\;\;\,\psi_n\,\xrightarrow{n\to\infty}\,\psi & \textrm{weakly in $X_S$,} \\
   \;g(\psi_n)\,\xrightarrow{n\to\infty}\,\phi & \textrm{weakly in $X_S^*$}
  \end{cases}
 \]
  for some $\phi\in X_S^*$,  one necessarily has $\phi=g(\psi)$. (The weak convergence is meant here in the sense of the Banach space weak topology.)

 \item[\textbf{(G6)}] If, for some $T>0$, the initial value problem
 \begin{equation}\label{cp:shg}
	\begin{cases}
		\;\;\;\;\ii\partial_t\psi\,=\,S\psi +g(\psi)\,,\\
		\;\psi(0,\cdot)\,=\,\psi_0
	\end{cases}
\end{equation}
 is solvable in $ L^\infty((0,T),X_S)\cap W^{1,\infty}((0,T),X_S^*)$, then the solution is unique.
\end{enumerate}

Observe that assumption \textbf{(G6)} expresses the uniqueness of local weak solutions to \eqref{cp:shg}, whereas \textbf{(G4)} is a customary gauge condition.

\begin{proposition}\label{pr:ass}
Let $S$ be a lower semi-bounded self-adjoint operator on the Hilbert space $X$, and let the function $g:X_S\to X_S^*$ satisfy conditions \textbf{(G1)}-\textbf{(G6)} above. Then for any $\psi_0\in X_S$ there exist a maximal time $T_{\mathrm{max}}\equiv T_{\mathrm{max}}(\psi_0)\in(0,+\infty]$ and a unique (maximal) solution $\psi\in\mathcal{C}([0,T_{\mathrm{max}}),X_S)\cap \mathcal{C}^1([0,T_{\mathrm{max}}),X_S^*)$ to the initial value problem \eqref{cp:shg}. Moreover:
\begin{itemize}
	\item[(i)] if $T_{\mathrm{max}}<\infty$, then $\displaystyle\lim_{t\uparrow T_{\mathrm{max}}}\|\psi(t)\|_{X_S}=+\infty $;
	\item[(ii)] the quantities $\mathcal{M}(t):=\|\psi(t,\cdot)\|_{X}^2$ and $\mathcal{E}(t):=\frac{1}{2}S[\psi(t,\cdot)]+G(\psi(t,\cdot))$ are constant for every $t\in(0,T_{\mathrm{max}})$;
	\item[(iii)] if, for a sequence $(\psi_0^{(n)})_{n\in\mathbb{N}}$ in $X_S$ one has $\|\psi_0^{(n)}-\psi_0\|_{X_S}\to 0$, and if $T\in(0,T_{\mathrm{max}}(\psi_0))$, then, eventually in $n$, the maximal solution $\psi^{(n)}$ to \eqref{gen_NLS} with initial datum $\psi_0^{(n)}$ is defined on $[0,T]$ and satisfies $\psi^{(n)}\to\psi$ in $\mathcal{C}([0,T],X_S)$.
\end{itemize}
\end{proposition}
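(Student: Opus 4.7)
The plan is to follow the standard scheme for local well-posedness of non-linear Schr\"odinger equations in the energy space, developed by Cazenave and adapted abstractly by Okazawa, Suzuki, and Yokota to precisely the present set of hypotheses. The core idea is to construct solutions through a Yosida regularisation of the generator $S$, derive uniform a priori bounds from approximate conservation of mass and energy, and pass to the limit exploiting the weak closedness condition \textbf{(G5)}.

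First I would regularise by replacing $S$ (up to a harmless spectral shift making it non-negative) with its Yosida approximant $S_\varepsilon := S(1+\varepsilon S)^{-1}$, which is bounded and self-adjoint on $X$, with $S_\varepsilon\to S$ strongly on $\mathcal{D}(S)$. The regularised Cauchy problem $\ii\partial_t\psi_\varepsilon = S_\varepsilon \psi_\varepsilon + g(\psi_\varepsilon)$ with $\psi_\varepsilon(0)=\psi_0$ can be solved on a short time interval by Picard iteration in $X_S$, thanks to the local Lipschitz property \textbf{(G1)} and the boundedness of $S_\varepsilon$, producing a classical $\mathcal{C}^1$-in-time solution valued in $X_S$. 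Standard differentiation combined with \textbf{(G4)} and the self-adjointness of $S_\varepsilon$ gives conservation of $\|\psi_\varepsilon(t)\|_X^2$, while \textbf{(G2)} yields conservation of the modified energy $\mathcal{E}_\varepsilon(t):=\tfrac12\langle S_\varepsilon\psi_\varepsilon(t),\psi_\varepsilon(t)\rangle_X + G(\psi_\varepsilon(t))$. Uniform-in-$\varepsilon$ bounds in $X_S$ then follow by a bootstrap: hypothesis \textbf{(G3)} applied with $\psi_2=0$ and a small parameter $\delta$ gives $|G(\psi_\varepsilon(t))|\leqslant \delta + C_{\delta,M}\|\psi_\varepsilon(t)\|_X$ whenever $\|\psi_\varepsilon(t)\|_{X_S}\leqslant M$; choosing $M:=2\|\psi_0\|_{X_S}$ and using mass conservation, one finds a time $T>0$ depending only on $\|\psi_0\|_{X_S}$ on which $\|\psi_\varepsilon(t)\|_{X_S}\leqslant M$ for every $\varepsilon$. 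Consequently $\partial_t\psi_\varepsilon = -\ii(S_\varepsilon\psi_\varepsilon + g(\psi_\varepsilon))$ is uniformly bounded in $L^\infty([0,T],X_S^*)$ by \textbf{(G1)}.

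Banach--Alaoglu then produces a subsequence converging weakly-$\ast$ in $L^\infty([0,T],X_S)$ to some $\psi$, with $\partial_t\psi_\varepsilon\to\partial_t\psi$ weakly-$\ast$ in $L^\infty([0,T],X_S^*)$; an Aubin--Lions compactness argument upgrades this to strong convergence in $\mathcal{C}([0,T],X)$ on bounded subsets. Hypothesis \textbf{(G5)} is precisely the tool that identifies the weak limit of $g(\psi_\varepsilon)$ with $g(\psi)$, so that $\psi$ solves $\ii\partial_t\psi = S\psi + g(\psi)$ as an identity in $X_S^*$. Uniqueness within $L^\infty_T X_S\cap W^{1,\infty}_T X_S^*$ is imposed by \textbf{(G6)}, and $\psi$ is upgraded from weak to norm continuity in $X_S$ by combining the (limit) energy identity with the continuity provided by \textbf{(G3)}. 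A standard continuation argument produces the maximal solution $\psi\in\mathcal{C}([0,T_{\mathrm{max}}),X_S)\cap\mathcal{C}^1([0,T_{\mathrm{max}}),X_S^*)$ and the blow-up alternative (i).

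Conclusion of items (ii) and (iii) is then routine: conservation of $\mathcal{M}(t)$ and $\mathcal{E}(t)$ passes to the limit from the approximants using the strong $X$-convergence, the estimate \textbf{(G3)}, and a time-reversal argument to upgrade the lower semi-continuity of the quadratic form $S[\cdot]$ to equality; continuous dependence on initial data is obtained by taking the difference of two solutions, estimating the nonlinearity in $X_S^*$ via \textbf{(G1)}, and closing by Gronwall on $[0,T]$. I expect the main technical hurdle to be the uniform $X_S$-bound on the Yosida approximants: the assumption \textbf{(G3)} is weaker than the full continuity of $G$ on $X_S$ that underlies Cazenave's original treatment, so the $\delta$--$C_{\delta,M}$ splitting must be used delicately, together with the bootstrap on $\|\psi_\varepsilon\|_{X_S}$, in order to avoid a circular estimate.
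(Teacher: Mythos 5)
Your overall strategy is the one underlying the paper's own (citation-based) proof: the paper delegates existence and item (ii) to Okazawa--Suzuki--Yokota (Lemma 5.3 and Theorem 2.3 there, after the harmless shift $S\mapsto S-\mathfrak{m}(S)$) and items (i), (iii) to the standard continuation scheme of Cazenave, and those references implement precisely the regularise--bound--pass-to-the-limit programme you outline, with \textbf{(G5)} identifying the weak limit of the nonlinearity and \textbf{(G6)} supplying uniqueness.

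There is, however, one step in your sketch that would fail as written: the construction of the approximating solutions. You regularise only the generator, replacing $S$ by its bounded Yosida approximant $S_\varepsilon$, and claim that $\ii\partial_t\psi_\varepsilon=S_\varepsilon\psi_\varepsilon+g(\psi_\varepsilon)$ can be solved by Picard iteration in $X_S$ ``thanks to \textbf{(G1)} and the boundedness of $S_\varepsilon$''. But \textbf{(G1)} only makes $g$ locally Lipschitz as a map $X_S\to X_S^*$, so the Duhamel map
\[
\psi\;\longmapsto\;e^{-\ii tS_\varepsilon}\psi_0-\ii\int_0^t e^{-\ii(t-s)S_\varepsilon}\,g(\psi(s))\,\ud s
\]
sends $\mathcal{C}([0,T],X_S)$ only into $\mathcal{C}([0,T],X_S^*)$, on which $g$ is not even defined: the iteration does not close. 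The standard cure --- and what Cazenave and Okazawa--Suzuki--Yokota actually do --- is to mollify the \emph{nonlinearity}, e.g.\ $g_\varepsilon(\psi):=J_\varepsilon\, g(J_\varepsilon\psi)$ with $J_\varepsilon:=(1+\varepsilon(S-\mathfrak{m}(S)))^{-1}$, which maps $X_S$ (indeed $X$) into $\mathcal{D}(S)\subset X_S$ and is there locally Lipschitz; the generator itself need not be regularised, since the unitary group $e^{-\ii tS}$ already acts boundedly on $X_S$. One then checks that $g_\varepsilon$ inherits \textbf{(G2)} and \textbf{(G4)} with potential $G_\varepsilon=G\circ J_\varepsilon$, so that the approximate mass and the approximate energy $\frac12 S[\psi_\varepsilon]+G(J_\varepsilon\psi_\varepsilon)$ are conserved. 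With that correction, the rest of your outline --- including the delicate use of \textbf{(G3)} in the uniform $X_S$-bound, which you correctly flag as the technical crux, and the Gronwall argument for (iii) --- is sound.
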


 \begin{proof}
  The existence of a strong solution and property (ii) follow directly from \cite[Lemma 5.3 and Theorem 2.3]{Okazawa-Suzuki-Yokota-2012}, modulo innocent shifts $S\mapsto S-\mathfrak{m}(S)$ and $g(\psi)\mapsto g(\psi)-\frac{\mathfrak{m}(S)}{2}\|\psi\|_X^2$ with respect to the notation therein, where $S$ was only assumed to be positive.
  Then, properties (i) and (iii) can be deduced along the same lines of the standard scheme of \cite[Theorem 3.3.9]{cazenave} (more precisely, \textsc{Step 2} and \textsc{Step 3} therein): for the ordinary (magnetic) Hartree equation studied by (beyond-Strichartz) energy methods this is done, e.g., in \cite[Section 4]{M-2015-nonStrichartzHartree}.
 \end{proof}

 Specialising the above abstract theory to our current setting, we can establish Proposition \ref{pr:local_wp} below, that represents the main result of this Section and will be proved in the following. In doing so, we mirror the analogous approach recently followed in \cite{Fukaya-Georgiev-Ikeda-2021}, where the investigated two-dimensional, singular-perturbed NLS had a pure-power non-linearity, instead of our convolutive non-linearity.

\begin{proposition}\label{pr:local_wp}
	Let $\alpha\in\mathbb{R}$, $\theta=\pm 1$, and let $w$ satisfy \eqref{ass_w}. Then for any $\psi_0\in H^1_{\alpha}(\mathbb{R}^2)$ there exist $T_{\mathrm{max}}\equiv T_{\mathrm{max}}(\psi_0)\in(0,+\infty]$ and a unique (maximal) solution $\psi\in\mathcal{C}([0,T_{\mathrm{max}}),H^1_{\alpha}(\mathbb{R}^2))$ to the initial value problem
	\begin{equation}\label{cp:sh}
	\begin{cases}
		\:\ii\partial_t\psi\,=\,-\Delta_{\alpha}\psi +\theta(w\ast|\psi|^2)\psi\,,\\
		\:\psi(0,\cdot)\,=\,\psi_0\,.
	\end{cases}
\end{equation}
 Moreover:
\begin{itemize}
	\item[(i)] if $T_{\mathrm{max}}<+\infty$, then $\displaystyle\lim_{t\uparrow T_{\mathrm{max}}}\|\psi(t)\|_{H_{\alpha}^1}=+\infty$;	
	\item[(ii)] mass $\mathcal{M}(t)$ and energy $\mathcal{E}(t)$ of $\psi$, as defined in \eqref{def:mass}-\eqref{def:energy}, are constant $\forall\,t\in(0,T_{\mathrm{max}})$;
	\item[(iii)] if, for a sequence $(\psi_0^{(n)})_{n\in\mathbb{N}}$ in $H_{\alpha}^1(\mathbb{R}^3)$, one has $\psi_0^{(n)}\to \psi_0$ in $H_{\alpha}^1(\mathbb{R}^3)$, and if $T\in(0,T_{\mathrm{max}}(\psi_0))$, then, eventually in $n$, the maximal solution $\psi^{(n)}$ to \eqref{singular_hartree-theta} with initial datum $\psi_0^{(n)}$ is defined on $[0,T]$ and satisfies $\psi^{(n)}\to\psi$ in $\mathcal{C}([0,T],H_{\alpha}^1(\mathbb{R}^3))$.
\end{itemize}
\end{proposition}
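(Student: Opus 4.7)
The plan is to apply the abstract existence framework of Proposition \ref{pr:ass} in the concrete setting $X=L^2(\mathbb{R}^2)$, $S=-\Delta_\alpha$, so that $X_S=H^1_\alpha(\mathbb{R}^2)$ and $X_S^*=H^{-1}_\alpha(\mathbb{R}^2)$, with the semilinear term
\[
 g(\psi)\;:=\;\theta\,(w*|\psi|^2)\psi\,,\qquad G(\psi)\;:=\;\frac{\theta}{4}\int_{\mathbb{R}^2}(w*|\psi|^2)|\psi|^2\,\ud x\,.
\]
Once assumptions \textbf{(G1)}--\textbf{(G6)} are verified, parts (i), (ii), (iii) of the proposition follow directly from the corresponding items (i), (ii), (iii) of Proposition \ref{pr:ass}, noting that $\mathcal{E}(t)=\tfrac{1}{2}(-\Delta_\alpha)[\psi(t)]+G(\psi(t))$ matches \eqref{def:energy}.

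The verification of most conditions is a routine consequence of the estimates collected in Lemma \ref{lem:Hnonlin-controlled} together with the embedding Lemma \ref{lem:H1alpha-embedding}. Indeed, writing $w=w_1+w_2$ with $w_j\in L^{p_j}(\mathbb{R}^2)$, for \textbf{(G1)} one tests the identity \eqref{pdpd} against a function $h\in H^1_\alpha(\mathbb{R}^2)$: a repeated use of H\"older and Young inequalities, exactly in the spirit of \eqref{eq:dig}, combined with $H^1_\alpha(\mathbb{R}^2)\hookrightarrow L^q(\mathbb{R}^2)$ for every $q\in[2,\infty)$, yields
\[
  \big\|g(\psi_1)-g(\psi_2)\big\|_{H^{-1}_\alpha}\;\lesssim_M\;\|\psi_1-\psi_2\|_{H^1_\alpha}
\]
whenever $\|\psi_j\|_{H^1_\alpha}\leqslant M$. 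Assumption \textbf{(G2)} is precisely the Fr\'echet differentiability computation already carried out in the proof of Lemma \ref{le:EL}: the identity
\[
 G(\psi+\xi)-G(\psi)\;=\;\mathfrak{Re}\int_{\mathbb{R}^2}(w*|\psi|^2)\psi\overline{\xi}\,\ud x+O\big(\|\xi\|_{H^1_\alpha}^2\big)
\]
(valid uniformly on bounded sets by \eqref{quartic}) gives $G'=g$ in the sense of the real $H^1_\alpha$-$H^{-1}_\alpha$ duality. Assumption \textbf{(G4)} is immediate, as $\langle g(\psi),\psi\rangle_{H^{-1}_\alpha,H^1_\alpha}=\theta\int(w*|\psi|^2)|\psi|^2\,\ud x\in\mathbb{R}$.

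The non-trivial assumptions are \textbf{(G3)}, \textbf{(G5)} and \textbf{(G6)}, and I expect \textbf{(G5)} to be the main obstacle. For \textbf{(G3)}, I would use \eqref{quartic_p} to bound $|G(\psi_1)-G(\psi_2)|$ by $C(\|\psi_j\|_{H^1_\alpha})\|\psi_1-\psi_2\|_{L^r}$ with $r=\tfrac{4p}{2p-1}\in(2,4]$, and then interpolate between $L^2$ and a higher $L^q$ (controlled by the uniform $H^1_\alpha$-bound $M$ via Corollary \ref{cor:GN-ineq}): for any given $\delta>0$ one either estimates $\|\psi_1-\psi_2\|_{L^r}$ directly by $\|\psi_1-\psi_2\|_{L^2}^\vartheta M^{1-\vartheta}$ and absorbs the small-exponent part into $\delta$ via Young's inequality, or bounds the total by $\delta$. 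For \textbf{(G5)}, given $\psi_n\rightharpoonup\psi$ in $H^1_\alpha$, the compact embedding of Corollary \ref{cpt_emb} yields $\psi_n\to\psi$ strongly in $L^q_{\mathrm{loc}}(\mathbb{R}^2)$ for all $q<\infty$; combining this with the uniform $H^1_\alpha$-bound and the decomposition $w=w_1+w_2$ allows one to pass to the limit in $\langle g(\psi_n),\varphi\rangle$ for every test function $\varphi\in C^\infty_c(\mathbb{R}^2)$, via the $L^r$-control of the Hartree non-linearity in \eqref{eq:yoho}. Since $g(\psi_n)\rightharpoonup\phi$ weakly in $H^{-1}_\alpha$ by assumption, the distributional identification forces $\phi=g(\psi)$ on a dense subset of $H^1_\alpha$, and hence in $H^{-1}_\alpha$. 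Finally, \textbf{(G6)} is a standard Duhamel argument: two weak solutions $\psi^{(1)},\psi^{(2)}\in L^\infty_TH^1_\alpha\cap W^{1,\infty}_TH^{-1}_\alpha$ with the same initial datum satisfy
\[
 \psi^{(1)}(t)-\psi^{(2)}(t)\;=\;-\ii\int_0^t e^{\ii(t-s)\Delta_\alpha}\big(g(\psi^{(1)}(s))-g(\psi^{(2)}(s))\big)\,\ud s\,,
\]
and the local Lipschitz bound \textbf{(G1)} together with unitarity of $e^{\ii t\Delta_\alpha}$ on $L^2$ and the local Strichartz control of Corollary \ref{cor:disp-strich}, coupled with Gronwall, forces $\psi^{(1)}=\psi^{(2)}$. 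The proposition follows.
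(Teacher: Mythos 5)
Your overall strategy coincides with the paper's: both proofs invoke the abstract framework of Proposition \ref{pr:ass} with $S=-\Delta_\alpha$ and reduce everything to checking \textbf{(G1)}--\textbf{(G6)}, with \textbf{(G2)} recycled from Lemma \ref{le:EL} and \textbf{(G6)} handled by a Duhamel argument. Your treatment of \textbf{(G3)} (bound via \eqref{quartic_p} in $L^{4p/(2p-1)}$, then interpolate against the uniform $H^1_\alpha$-bound and split off the $\delta$ by Young) is a legitimate variant of the paper's, which instead gets a clean Lipschitz bound directly in $\|\psi_1-\psi_2\|_{L^2}$ by pairing $g(\psi_1)-g(\psi_2)$ against $\psi_2$ and using \eqref{eq:dig}; both work, the paper's being slightly sharper (it gives \textbf{(G3)} with $\delta=0$).

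There are, however, two places where your plan is under-specified in a way that matters. First, for \textbf{(G5)}: local strong convergence $\psi_n\to\psi$ in $L^q_{\mathrm{loc}}$ is not by itself enough, because $g(\psi_n)(x)$ depends on $\psi_n$ over all of $\mathbb{R}^2$ through the convolution. After localising by $\varphi$, the term $\int (w*(|\psi_n|^2-|\psi|^2))\,\psi\varphi\,\ud x$ still involves the non-compactly-supported density $|\psi_n|^2-|\psi|^2$; the paper closes this by rewriting it as $\int(|\psi_n|^2-|\psi|^2)\,\mathcal{F}$ with $\mathcal{F}=w*(\psi\varphi)\in L^{q'}(\mathbb{R}^2)$ and proving that $|\psi_n|^2\rightharpoonup|\psi|^2$ \emph{globally} weakly in $L^q$ (uniform $L^{2q}$-bounds plus $L^1_{\mathrm{loc}}$ identification of the limit). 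You need this extra step. Second, and more seriously, for \textbf{(G6)}: the scheme you describe --- unitarity of $e^{\ii t\Delta_\alpha}$ on $L^2$ plus Gronwall fed by a Lipschitz bound --- does not close in the mass-critical case. For a component $w_j\in L^1(\mathbb{R}^2)$, estimating $\|g(\psi_1)-g(\psi_2)\|_{L^2}$ by a constant times $\|\psi_1-\psi_2\|_{L^2}$ via \eqref{eq:dig} would require placing the $\psi_j$'s in $L^\infty$, which $H^1_\alpha(\mathbb{R}^2)$ does not provide. This is exactly why Lemma \ref{le:uni} runs the uniqueness argument in the genuine Strichartz norm $L^4_TL^4$ (for the $p_j=1$ components) using the inhomogeneous estimate \eqref{eq.STr4}, gains a positive power $T_0^{\sigma}$, and concludes by a small-time contraction iterated over a cover of $[0,T)$ rather than by Gronwall. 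You cite Corollary \ref{cor:disp-strich}, so the right tool is on your list, but the argument as stated would fail for $p=\min\{p_1,p_2\}=1$ unless reorganised along these lines.
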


\begin{remark}
In view of the Strichartz estimates \eqref{eq.STr3}-\eqref{eq.STr4}, it is conceivable to establish Proposition \ref{pr:local_wp} by means of a classical contraction argument (i.e., Kato's method \cite[Section 4.4]{cazenave}). However, this would require a characterisation of the singular-perturbed Sobolev spaces $W_{\alpha}^{1,r}(\mathbb{R}^2)$ adapted to $-\Delta_{\alpha}$, which is currently \emph{unknown} when $r\neq 2$.
\end{remark}

 For the proof of Proposition \ref{pr:local_wp} let us first verify, by means of the inhomogeneous Strichartz estimates \eqref{eq.STr4}, the uniqueness of local weak solutions to \eqref{cp:sh}.

\begin{lemma}\label{le:uni}
	Let $\alpha\in\mathbb{R}$, $\theta=\pm 1$, $T>0$, $\psi_0\in H_{\alpha}^1(\mathbb{R}^2)$, and let $w$ satisfy \eqref{ass_w}. If $\psi_1,\psi_2\in L^\infty((0,T),H_{\alpha}^1(\mathbb{R}^2))\cap W^{1,\infty}((0,T),H_{\alpha}^{-1}(\mathbb{R}^2))$ solve \eqref{cp:sh} for the considered $\psi_0$, then $\psi_1=\psi_2$.
\end{lemma}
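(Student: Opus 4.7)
Set $\psi:=\psi_1-\psi_2\in L^\infty_T H^1_\alpha\cap W^{1,\infty}_T H^{-1}_\alpha$, which solves
\[
 \ii\partial_t \psi\;=\;-\Delta_\alpha \psi+F\qquad\textrm{in }H^{-1}_\alpha(\mathbb{R}^2),\qquad \psi(0)=0,
\]
with $F:=\theta\bigl[(w*|\psi_1|^2)\psi_1-(w*|\psi_2|^2)\psi_2\bigr]$. By \eqref{eq:L2Hartree} and the uniform $H^1_\alpha$-bound on $\psi_j$, $F\in L^\infty_T L^2(\mathbb{R}^2)$. A direct computation shows that $u(t):=e^{-\ii t\Delta_\alpha}\psi(t)$, viewed in $L^2(\mathbb{R}^2)$, satisfies $\partial_t u=-\ii\,e^{-\ii t\Delta_\alpha}F\in L^\infty_T L^2$, so integrating from $0$ to $t$ (and using $u(0)=\psi(0)=0$) delivers the Duhamel representation
\[
 \psi(t)\;=\;-\ii\int_0^t e^{\ii(t-s)\Delta_\alpha}F(s)\,\ud s,\qquad t\in[0,T].
\]

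The plan is now to run a Strichartz-based contraction on $\psi$. Fix $\tau\in(0,T]$ and pick the admissible Strichartz pair $(r_1,p_1)=(r_2,p_2)=(4,4)$, whose dual $(4/3,4/3)$ is tailored to absorb the Hartree nonlinearity (this covers the delicate case $\max\{p_1,p_2\}\leqslant 4$; for components of $w$ with larger integrability one uses the endpoint $(\infty,2)$ in the analogous way). Splitting $w=w_1+w_2$ with $w_j\in L^{p_j}(\mathbb{R}^2)$ and applying the bilinear bound \eqref{eq:dig} to each piece of $F$, with $r=4/3$ and exponents $q_{1,j},q_{2,j}\in[2,\infty)$ satisfying the scaling relation $1-p_j^{-1}=2\,q_{1,j}^{-1}+q_{2,j}^{-1}-3/4$, then combining with the Sobolev embedding \eqref{eq.sob1} and H\"{o}lder in time, one obtains
\[
 \|F\|_{L^{4/3}((0,\tau),L^{4/3})}\;\leqslant\; C_M\,\tau^{\gamma}\,\|\psi\|_{L^4((0,\tau),L^4)}
\]
for some $\gamma\equiv\gamma(p_1,p_2)>0$, with $M:=\max_j\|\psi_j\|_{L^\infty_T H^1_\alpha}$. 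Plugging this into the inhomogeneous Strichartz estimate \eqref{eq.STr4} (whose prefactor is $O(1)$ as $\tau\downarrow 0$) yields
\[
 \|\psi\|_{L^4((0,\tau),L^4)}\;\leqslant\; C_{T,M}\,\tau^{\gamma}\,\|\psi\|_{L^4((0,\tau),L^4)}.
\]

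Since $\psi\in L^\infty_T H^1_\alpha\hookrightarrow L^\infty_T L^4$ by Lemma \ref{lem:H1alpha-embedding}, the norm $\|\psi\|_{L^4((0,\tau),L^4)}$ is finite, and choosing $\tau$ small enough to ensure $C_{T,M}\,\tau^{\gamma}<1$ forces $\|\psi\|_{L^4((0,\tau),L^4)}=0$, hence $\psi\equiv 0$ on $[0,\tau]$. Since the smallness threshold depends only on $M$, not on the initial instant, iterating the argument over consecutive intervals of length $\tau$ exhausts $[0,T]$ in finitely many steps.

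The main obstacle is the mass-critical case $p_j=1$: there \eqref{eq:dig} is tight and compels the choice $q_{1,j}=q_{2,j}=4$, with the resulting $\tau^{1/2}$-factor harvested from H\"{o}lder in time being precisely what beats the $O(1)$ Strichartz prefactor and produces the contraction. In the purely mass-subcritical regime $p_j>1$ one could alternatively bypass Strichartz altogether and argue by a direct $L^2$-Gronwall estimate on $\tfrac{\ud}{\ud t}\|\psi\|_{L^2}^2=2\,\mathfrak{Im}\,\langle F,\psi\rangle_{L^2}$, since \eqref{eq:dig} with $r=q_2=2$, $q_1=2p'_j$ then provides $\|F\|_{L^2}\lesssim_M\|\psi\|_{L^2}$.
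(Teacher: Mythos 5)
Your argument is correct and follows essentially the same route as the paper: Duhamel representation, the inhomogeneous Strichartz estimate \eqref{eq.STr4}, the bilinear bound \eqref{eq:dig} combined with the embedding \eqref{eq.sob1}, a positive power of the time length from H\"older in time, and iteration over finitely many subintervals. The only (cosmetic) difference is the bookkeeping of Strichartz pairs — the paper uses $(\infty,2)$ for every component of $w$ with $p_j>1$ and reserves $(4,4)$ for $p_j=1$, whereas you favour $(4,4)$ up to $p_j\leqslant 4$ — and your closing remark that a plain $L^2$-Gronwall suffices in the purely mass-subcritical case is a valid simplification not made in the paper.
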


\begin{proof}
	The Duhamel formula yields, for every $t\in(0,T)$ and almost all $x\in\mathbb{R}^2$,
	\[
	 \begin{split}
	  & (\psi_1-\psi_2)(t,x)  \\
	  &\quad =\;- \ii \int_0^t \Big(e^{\ii(t-\tau)\Delta_{\alpha} }\big((w*|\psi_1(\tau,\cdot)|^2)\psi_1(\tau,\cdot) - (w*|\psi_2(\tau,\cdot)|^2)\psi_2(\tau,\cdot)\big)\Big)(x) \,\ud \tau\,.
	 \end{split}
	\]
  For $j\in\{1,2\}$ set
$$(s_j,r_j)\,:=\,\begin{cases}
(\infty,2)\,,&\textrm{if }\; p_j>1\,,\\
\;(4,4)\,,& \textrm{if }\; p_j=1\,,
\end{cases}
$$
and set also $\mathcal{X}_T:=L_{T}^{s_1}L^{r_1}+L_{T}^{s_2}L^{r_2}$, $ \widetilde{\mathcal{X}}_T:=L_{T}^{s'_1}L^{r'_1}+L_{T}^{s'_2}L^{r'_2}$ (with reference to the notation declared at the end of Section \ref{sec:intro-main}). Recall that $p_1,p_2\in[1,\infty)$ parametrise the assumption $w\in L^{p_1}(\mathbb{R}^2)+L^{p_2}(\mathbb{R}^2)$.
For concreteness of presentation, let $(s,r)$ denote for a moment either $(s_1,r_1)$, $(s_2,r_2)$, and similarly, let $p$ stand temporarily for $p_1$ or $p_2$. For generic $T_0\in(0,T)$ we deduce from the above identity for $\psi_1-\psi_2$ that
\[
 \begin{split}
  &\|\psi_1-\psi_2\|_{L^s_{T_0}L^r}\;\leqslant\;C_{T_0}\big\|(w*|\psi_1|^2)\psi_1-(w*|\psi_2|^2)\psi_2\big\|_{L^{s'}_{T_0}L^{r'}} \\
  &\; \lesssim\;C_{T_0}\|w\|_{L^p}  \Big\|\big( \|\psi_1\|^2_{L^b}+\|\psi_2\|^2_{L^b}\big)\|\psi_1-\psi_2\|_{L^r}\Big\|_{L^{s'}_{T_0}}\qquad\qquad\;\; \big(b=\textstyle{(1-\frac{1}{r}-\frac{1}{2p})^{-1}}\big) \\
  &\; \leqslant\;C_{T_0}(T_0)^\sigma\|w\|_{L^p}\big( \|\psi_1\|^2_{L^\infty_{T_0}L^b}+\|\psi_2\|^2_{L^\infty_{T_0}L^b}\big)\|\psi_1-\psi_2\|_{L^s_{T_0}L^r}\;\;\, (\textstyle{\sigma=\frac{2s-1}{2s}})
 \end{split}
\]
having used the Strichartz estimate \eqref{eq.STr4} in the first step, estimate \eqref{eq:dig} in space in the second, and H\"{o}lder inequality in time in the third. Observe that if $(s,r)=(\infty,2)$, then $\sigma=1$ and $b=\frac{2p}{p-1}\in(2,\infty)$ (this is the case where $p>1$), and if instead $(s,r)=(4,4)$, then $\sigma=\frac{7}{8}$ and $b=4$. In either case, $b$ is an admissible spatial index for the embedding \eqref{eq.sob1}, which gives $\|\psi_j\|_{L^\infty_{T_0}L^b}\lesssim \|\psi_j\|_{L^\infty_{T_0}H^1_\alpha}$, $j\in\{1,2\}$. We can therefore conclude
\[
 \|\psi_1-\psi_2\|_{\mathcal{X}_{T_0}}\;\lesssim \;C_{T_0}(T_0)^\sigma\|w\|_{L^{p_1}+L^{p_2}}\big( \|\psi_1\|^2_{L^\infty_{T_0}H^1_\alpha}+\|\psi_2\|^2_{L^\infty_{T_0}H^1_\alpha}\big)\|\psi_1-\psi_2\|_{\mathcal{X}_{T_0}}
\]
for some $\sigma>0$. Recall from \eqref{eq.STr4} that $C_{T}=O(1)$ as $T\downarrow 0$. So, by choosing $T_0$ sufficiently small, we deduce $\|\psi_1-\psi_2\|_{\mathcal{X}_{T_0}}=0$, whence $\psi_1=\psi_2$ point-wise a.e.~on $[0,T_0]\times\mathbb{R}^2$. Covering $[0,T)$ with subsequent intervals $[nT_0,(n+1)T_0]\cap[0,T)$, for increasing $n\in\mathbb{N}_0$, and iterating the above reasoning a sufficient finite number of times, we finally conclude $\psi_1=\psi_2$ for a.e.~$(t,x)\in[0,T)\times\mathbb{R}^2$.
  \end{proof}


\begin{proof}[Proof of Proposition \ref{pr:local_wp}]
 This is an application of Proposition \ref{pr:ass} with
	\begin{align*}
		&S\,=\,-\Delta_{\alpha}\,,\quad g(\psi)\,=\,\theta(w*|\psi|^2)\psi\,,
		\\&X\,=\,L^2(\mathbb{R}^2)\,,\quad
		X_S\,=\,H_{\alpha}^1(\mathbb{R}^2),,\quad
		X_S^*\,=\,H_{\alpha}^{-1}(\mathbb{R}^2)\,.
	\end{align*}
 As condition \textbf{(G6)} is ensured in this case by Lemma \ref{le:uni}, the rest of the proof consists of checking the validity of \textbf{(G1)}--\textbf{(G5)}.

 For \textbf{(G1)}, set $q_j:=\frac{3p_j-2}{2p_j}\in(2,6]$, $j\in\{1,2\}$, given the integrability indices $p_1$ and $p_2$ for $w$. Then,
 \[
  \begin{split}
   \|g(\psi_1)&-g(\psi_2)\|_{H_{\alpha}^{-1}}\;\leqslant\;\|g(\psi_1)-g(\psi_2)\|_{L^2} \\
   &\lesssim\;\|w\|_{L^{p_1}+L^{p_2}}\big(\|\psi_1\|_{L^{q_1}\cap L^{q_2}}^2+\|\psi_2\|_{L^{q_1}\cap L^{q_2}}^2\big)\|\psi_1-\psi_2\|_{L^{q_1}\cap L^{q_2}} \\
   &\lesssim \;\|w\|_{L^{p_1}+L^{p_2}}\big(\|\psi_1\|_{H_{\alpha}^1}^2+\|\psi_2\|_{H_{\alpha}^1}^2\big)\|\psi_1-\psi_2\|_{H_{\alpha}^1}\,,
  \end{split}
 \]
 having used the dual of $\|\cdot\|_{L^2}\leqslant\|\cdot\|_{H^1_\alpha}$ in the first step, estimate \eqref{eq:dig} in the second, and the continuous embedding \eqref{eq.sob1} in the third. This expresses precisely the Lipschitz continuity of $g$ on $H^1_\alpha$-bounded sets, as a $H^1_\alpha(\mathbb{R}^2)\to H^{-1}_\alpha(\mathbb{R}^2)$ map.

 Concerning \textbf{(G2)}, it was shown in the proof of Lemma \ref{le:EL} that setting
 \[
  G(\psi)\;:=\;\frac{\theta}{4}\langle g(\psi),\psi\rangle_{H_{\alpha}^{-1},H_{\alpha}^1}\;=\;\frac{\theta}{4}\int_{\mathbb{R}^2} \big(w*|\psi|^2\big) |\psi|^2\,\ud x\,, \qquad
	\psi\in H_{\alpha}^1(\mathbb{R}^2)\,,
 \]
  one has $G\in\mathcal{C}^1(H^1_\alpha(\mathbb{R}^2),\mathbb{R})$ and $G'=g$ in the above sense \textbf{(G2)}.

Condition \textbf{(G3)} follows from
 \[
  \begin{split}
    &|G(\psi_1)-G(\psi_2)|\;\leqslant\;\big|\langle g(\psi_1),\psi_1-\psi_2\rangle_{H_{\alpha}^{-1},H_{\alpha}^1}\big|+\big|\langle g(\psi_1)-g(\psi_2),\psi_2\rangle_{H_{\alpha}^{-1},H_{\alpha}^1}\big| \\
    &\qquad\leqslant\;\|(w*|\psi_1^2)\psi_1\|_{L^2}\|\psi_1-\psi_2\|_{L^2} \\
    &\qquad\qquad\qquad + \|(w*|\psi_1^2)\psi_1-(w*|\psi_1^2)\psi_1\|_{L^{q_1'}\cap L^{q_2'}}\,\|\psi_2\|_{L^{q_1}\cap L^{q_2}} \\
    &\qquad\lesssim\;\|w\|_{L^{p_1}+L^{p_2}}\Big(\|\psi_1\|_{L^{q_1}\cap L^{q_2}}^3\|\psi_1-\psi_2\|_{L^2} \\
    & \qquad\qquad\qquad\qquad\quad +\big(\|\psi_1\|_{L^{q_1}\cap L^{q_2}}^2+\|\psi_1\|_{L^{q_1}\cap L^{q_2}}^2\big)\|\psi_1-\psi_2\|_{L^2}\|\psi_2\|_{L^{q_1}\cap L^{q_2}}\Big) \\
     &\qquad\lesssim\;\|w\|_{L^{p_1}+L^{p_2}}\big(\|\psi_1\|^3_{H_{\alpha}^1}+\|\psi_2\|^3_{H_{\alpha}^1}\big)\|\psi_1-\psi_2\|_{L^2}\,,
  \end{split}
 \]
 having used estimate \eqref{eq:dig} in the third inequality and the continuous embedding  \eqref{eq.sob1} in the last.

As $\langle g(\psi),\psi\rangle_{H^{-1}_\alpha,H^1_\alpha}=4\,\theta^{-1}G(\psi)\in\mathbb{R}$, condition \textbf{(G4)} is matched.


 Last, concerning \textbf{(G5)}, consider any $\psi$ and any sequence $(\psi_n)_{n\in\N}$ in $H^1_\alpha(\mathbb{R}^2)$ satisfying
 \[
  \begin{cases}
   \;\;\;\;\;\,\psi_n\,\xrightarrow{n\to\infty}\,\psi & \textrm{$H^1_\alpha$-weakly,} \\
   \;g(\psi_n)\,\xrightarrow{n\to\infty}\,\phi & \textrm{$H^{-1}_\alpha$-weakly}
  \end{cases}
 \]
  for some $\phi\in H^{-1}_\alpha(\mathbb{R}^2)$\,: the goal is now to show that $\phi=g(\psi)$. In fact, it suffices to show that $g(\psi_n)\to g(\psi)$ in $\mathcal{D}'(\mathbb{R}^2)$, and combine this with $g(\psi_n)\to \phi$ $H^{-1}_\alpha$-weakly.

  To this aim, pick $\varphi \in C_c^\infty(\mathbb{R}^2)$, arbitrary, and observe that
  \[\tag{*}\label{eq:unifbdd}
   \|\psi_n\|_{L^r}\,\lesssim_r\,\|\psi\|_{H^1_\alpha}\qquad \forall n\in\mathbb{N}\,,\;\;\forall r\in[2,\infty)
  \]
  (the $H^1_\alpha$-weak convergence of $(\psi_n)_{n\in\mathbb{N}}$ implies its uniform boundedness in $H^1_\alpha(\mathbb{R}^2)$ and hence also in any $L^r(\mathbb{R}^2)$ above, owing to the uniform boundedness principle and the continuous embedding \eqref{eq.sob1}). We re-write
  \[
   \begin{split}
    & \int_{\mathbb{R}^2}\big(g(\psi_n)-g(\psi)\big)\varphi\,\ud x \\
    & \qquad =\; \int_{\mathbb{R}^2} \big(w*|\psi_n|^2\big)(\psi_n-\psi)\varphi\,\ud x+\int_{\mathbb{R}^2} \big(w*\big(|\psi_n|^2-|\psi|^2\big)\big) \psi\varphi\,\ud x \\
    & \qquad \equiv\;(\mathrm{I}_n)+(\mathrm{II}_n)\,.
   \end{split}
  \]
 Letting $p$ stand for either $p_1,p_2$, $(\mathrm{I}_n)$ is estimated by means of \eqref{eq:yoho} as
 \[
     |(\mathrm{I}_n)|\;\lesssim\;
  \begin{cases}
   \;\|w\|_{L^1}\|\psi_n\|^2_{L^{3}}\|(\psi_n-\psi)\varphi\|_{L^{3}}\,, & \textrm{if }\; p=1\,, \\
   \;\|w\|_{L^p}\|\psi_n\|^2_{L^{2}}\|(\psi_n-\psi)\varphi\|_{L^{\frac{p}{p-1}}}\,, & \textrm{if }\; p\in(1,\infty)\,.
  \end{cases}
 \]
 Using \eqref{eq:unifbdd} and the fact that $(\psi_n-\psi)\varphi$ has compact support, we thus deduce that $|(\mathrm{I}_n)|\lesssim_{p_1,p_2}\|w\|_{L^{p_1}+L^{p_2}}\|\psi\|_{H^1_\alpha}^2\|(\psi_n-\psi)\varphi\|_{L^{q}}$ for some $q\in(1,\infty)$. Then $(\mathrm{I}_n)\to 0$ owing to the compact embedding of Corollary \ref{cpt_emb}.

 As for $(\mathrm{II}_n)$, it is convenient to re-write it as
 \[
  \begin{split}
     (\mathrm{II}_n)&\;=\;\int_{\mathbb{R}^2}\ud y \,\big(|\psi_n(y)|^2-|\psi(y)|^2\big)\,\mathcal{F}(y)\,, \\
  \mathcal{F}(y)&\;:=\;\int_{\mathbb{R}^2}\ud x\,w(x-y)\,\psi(x)\,\varphi(x)\;=\;(w*(\psi\varphi))(y)\,,
  \end{split}
 \]
 where we used that $w$ is spherically symmetric, whence even. Setting $q:=\frac{2p}{2p-1}\in[1,2)$, we see that $\mathcal{F}\in L^{q'}(\mathbb{R}^2)$: indeed,
 \[
  \begin{split}
    \|\mathcal{F}\|_{L^{q'}} \;&=\; \|w*(\psi\varphi)\|_{L^{2p}}\;\lesssim_p\;\|w\|_{L^p}\|\psi\varphi\|_{L^{q}}\;\lesssim_{p,\varphi}\|w\|_{L^p}\|\psi\|_{L^{2q}} \\
    &\lesssim_{p,\varphi}\|w\|_{L^p}\|\psi\|_{H^1_\alpha}\;<\;+\infty\,,
  \end{split}
 \]
 having used the Young inequality in the second step and the continuous embedding \eqref{eq.sob1} in the last. Therefore, we can conclude that $(\mathrm{II}_n)\to 0$ if we show that $|\psi_n|^2\to|\psi|^2$ $L^q$-weakly.

 Now, \eqref{eq:unifbdd} and $\||\psi_n|^2\|_{L^q}=\|\psi_n\|_{L^{2q}}^2$ imply that $(|\psi_n|^2)_{n\in\mathbb{N}}$ converges $L^q$-weakly, up to extracting a subsequence that we shall denote again by $(|\psi_n|^2)_{n\in\mathbb{N}}$. On the other hand, for arbitrary compact $K\subset\mathbb{R}^2$ one has
 \[
  \big\| |\psi_n|^2-|\psi|^2\big\|_{L^1(K)}\;\leqslant\;\|\psi_n-\psi\|_{L^2((K)}\big( \|\psi_n\|_{L^2(K)}+\|\psi\|_{L^2(K)}\big)\;\xrightarrow{n\to\infty}\;0\,,
 \]
 by \eqref{eq:unifbdd} and compact embedding (Corollary \ref{cpt_emb}). Hence, $|\psi_n|^2\to|\psi|^2$ in $L^1_{\mathrm{loc}}(\mathbb{R}^2)$. Then necessarily the weak-$L^q$-limit of the $|\psi_n|^2$'s is indeed $|\psi|^2$, independent of selecting a subsequence.

 Having shown that both $(\mathrm{I}_n)\to 0$ and $(\mathrm{II}_n)\to 0$, we can finally conclude that $g(\psi_n)\to g(\psi)$ in $\mathcal{D}'(\mathbb{R}^2)$: property \textbf{(G5)} is thus established.
\end{proof}

 \begin{remark}
  Let us observe that \textbf{(G1)} implies, in particular (for $\psi_2\equiv 0$),
  \begin{equation}\label{eq:esc}
   \big\|(w*|\psi|^2)\psi\big\|_{H^{-1}_\alpha}\;\leqslant\;\|\psi\|^3_{H^1_\alpha}\qquad \forall \psi\in H^1_\alpha(\mathbb{R}^2)\,,
  \end{equation}
 i.e., the Hartree non-linearity is \emph{energy sub-critical}.
 \end{remark}

\section{Enhanced local solution theory in the mass sub-critical regime}

 We detour along this Section from the main line of the proof of Theorem \ref{th:main_ex}, in order to examine more closely the mass sub-critical regime $p>1$, which includes in particular the Riesz convolution potential $w(x):=|x|^{-\eta}$, $\eta\in (0,2)$. We find that the solution map for the initial value problem \eqref{cp:sh} is locally uniformly continuous. More precisely, we obtain the following result.

 \begin{proposition}\label{pr:luco}
	Let $\alpha\in\mathbb{R}$ and $\theta=\pm 1$. Assume that $w$ satisfy \eqref{ass_w} for $p:=\min\{p_1,p_2\}>1$ (i.e., a mass sub-critical non-linearity). Then for every $M>0$ there exists $T\equiv T(M)>0$ such that the following holds: for every initial data $\psi_0,\widetilde{\psi}_0\in H_{\alpha}^1(\R^2)$, with $\|\psi_0\|_{H_{\alpha}^1},\|\widetilde{\psi}_0\|_{H_{\alpha}^1}\leqslant M$, the corresponding (maximal) solutions $\psi,\widetilde{\psi}$ to \eqref{cp:sh} (provided by Proposition \ref{pr:local_wp}) are defined on $[0,T]$, and satisfy the estimate
$$\|\psi-\widetilde{\psi}\|_{L_T^{\infty}H_{\alpha}^1}\leqslant 2\,\|\psi_0-\widetilde{\psi}_0\|_{H_{\alpha}^1}.$$
\end{proposition}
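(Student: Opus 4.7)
The plan is to exploit the local Lipschitz estimate \eqref{llhn} for the Hartree nonlinearity in the energy space (available precisely because $p>1$), together with the fact that the linear propagator $(e^{\ii t\Delta_\alpha})_{t\in\mathbb{R}}$ is an \emph{isometry} of $H^1_\alpha(\mathbb{R}^2)$ (it commutes with $(1-e_\alpha-\Delta_\alpha)^{1/2}$ by functional calculus), in order to run a direct Gr\"onwall argument in the energy norm. This will yield a genuinely Lipschitz---and in fact contractive---dependence on the initial datum, strictly stronger than the mere continuous dependence provided by Proposition \ref{pr:local_wp}(iii).

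First I would upgrade the blow-up alternative Proposition \ref{pr:local_wp}(i) into a quantitative \emph{uniform-lifespan} statement: there exists $T_0\equiv T_0(M)>0$ such that every maximal solution $\psi$ with $\|\psi_0\|_{H_\alpha^1}\leqslant M$ is defined on $[0,T_0]$ and satisfies $\|\psi\|_{L^\infty_{T_0}H_\alpha^1}\leqslant 2M$. Starting from the Duhamel identity
\begin{equation*}
\psi(t)\;=\;e^{\ii t\Delta_\alpha}\psi_0 \,-\, \ii\theta\!\int_0^t e^{\ii(t-s)\Delta_\alpha}\big(w\ast|\psi(s)|^2\big)\psi(s)\,\ud s\,,
\end{equation*}
using the $H^1_\alpha$-isometry and the energy-subcritical bound \eqref{eq:esc}, the function $f(t):=\|\psi\|_{L^\infty_{[0,t]}H_\alpha^1}$ satisfies $f(t)\leqslant M + Ct\,f(t)^3$, and a standard continuity argument closes the bootstrap on an interval $T_0\sim (CM^2)^{-1}$.

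Next I would subtract the Duhamel formulas for $\psi$ and $\widetilde{\psi}$ and again invoke the $H^1_\alpha$-isometry to arrive at
\begin{equation*}
\|\psi(t)-\widetilde{\psi}(t)\|_{H_\alpha^1}\;\leqslant\; \|\psi_0-\widetilde{\psi}_0\|_{H_\alpha^1} \,+\, \int_0^t \big\|(w\ast|\psi|^2)\psi-(w\ast|\widetilde{\psi}|^2)\widetilde{\psi}\big\|_{H_\alpha^1}\,\ud s\,.
\end{equation*}
Plugging the mass sub-critical local Lipschitz bound \eqref{llhn}, schematically of the form $\|g(\psi_1)-g(\psi_2)\|_{H_\alpha^1}\lesssim(\|\psi_1\|_{H_\alpha^1}^2+\|\psi_2\|_{H_\alpha^1}^2)\|\psi_1-\psi_2\|_{H_\alpha^1}$, and using the uniform bound $\|\psi(s)\|_{H_\alpha^1},\|\widetilde{\psi}(s)\|_{H_\alpha^1}\leqslant 2M$ from the previous step, I would obtain a Gr\"onwall-type inequality whose resolution gives $\|\psi-\widetilde{\psi}\|_{L^\infty_T H_\alpha^1}\leqslant e^{CM^2 T}\|\psi_0-\widetilde{\psi}_0\|_{H_\alpha^1}$. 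The conclusion then follows by shrinking $T\leqslant T_0(M)$ so that $e^{CM^2 T}\leqslant 2$.

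The decisive ingredient---and the only place where sub-criticality $p>1$ is truly essential---is having the full $H_\alpha^1$-to-$H_\alpha^1$ Lipschitz estimate \eqref{llhn} for the Hartree nonlinearity, as opposed to the weaker $H_\alpha^1$-to-$L^2$ or $H_\alpha^1$-to-$H_\alpha^{-1}$ Lipschitz bounds that already sufficed for the existence-and-continuous-dependence statement of Proposition \ref{pr:local_wp}. Such a clean Lipschitz bound in the energy norm is unavailable in the mass critical case $p=1$, which is why Proposition \ref{pr:luco} has to be confined to $p>1$; apart from this point, all remaining steps are the standard bootstrap/Gr\"onwall combination.
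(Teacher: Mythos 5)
Your argument is essentially the paper's own proof: a bootstrap via Duhamel and the $H^1_\alpha$-unitarity of $e^{\ii t\Delta_\alpha}$ to get a uniform lifespan $T_0(M)$ with the a priori bound $\|\psi\|_{L^\infty_{T_0}H^1_\alpha}\leqslant 2M$, followed by the difference estimate with the mass sub-critical Lipschitz bound \eqref{llhn} and an absorption (the paper absorbs the factor $4DTM^2\leqslant\frac12$ directly rather than via Gr\"onwall, but this is immaterial). One correction: in the bootstrap step you invoke \eqref{eq:esc}, which only controls $\|(w\ast|\psi|^2)\psi\|_{H^{-1}_\alpha}$ and hence cannot close an estimate for $f(t)$ in the $H^1_\alpha$-norm; you need the genuine $H^1_\alpha$ trilinear bound \eqref{trilinear} (equivalently \eqref{llhn} with one argument set to zero), which is exactly where $p>1$ enters and is what the paper uses there.
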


We start by collecting some useful properties of the Green function of the Laplace operator, which supplement those in Lemma \ref{lem:gom1}. To this aim, recall that each Bessel functions $K_{\nu}$, $\nu>0$ \cite[Chapter 9]{Abramowitz-Stegun-1964} is exponentially decreasing in its argument and satisfies the asymptotics \cite[Eq.~(9.6.9)]{Abramowitz-Stegun-1964}
\begin{equation}\label{asyK}
	K_{\nu}(\rho)\:\stackrel{\rho\downarrow 0}{=}\: 2^{-(1+\nu)}\,\Gamma(\nu)\,\rho^{-\nu}\,.
\end{equation}

\begin{lemma}\label{lem:gom2}
	For any $ \omega >0$ one has
	\begin{eqnarray}
		\mathcal{G}_{\omega} \!\!&\in& \!\! W^{1,r}(\mathbb{R}^2)\qquad \forall r \in [1,2)\,, \label{eq.fsp6} \\
		|x|^{\varepsilon}\,\nabla\mathcal{G}_{\omega} \!\!&\in& \!\! L^2(\mathbb{R}^2)\qquad\quad \forall \varepsilon >0\,. \label{eq.fsp7}
	\end{eqnarray}
\end{lemma}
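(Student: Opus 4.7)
The plan is to reduce both statements to explicit radial integrals involving the modified Bessel function $K_1$, using the fact that $\mathcal{G}_\omega(x)=(2\pi)^{-1} K_0(|x|\sqrt{\omega})$ and the standard identity $K_0'=-K_1$, so that
\[
\nabla\mathcal{G}_\omega(x)\;=\;-\frac{\sqrt{\omega}}{2\pi}\,K_1(|x|\sqrt{\omega})\,\frac{x}{|x|}\,,
\]
and in particular $|\nabla\mathcal{G}_\omega(x)|=(2\pi)^{-1}\sqrt{\omega}\,K_1(|x|\sqrt{\omega})$ is a radial, positive function. I would quote (or re-derive from \eqref{asyK}) the two basic facts about $K_1$: exponential decay as $\rho\to\infty$, and the short-range asymptotic $K_1(\rho)\sim c\,\rho^{-1}$ as $\rho\downarrow 0$ (the constant being irrelevant here). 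Together with Lemma \ref{lem:gom1}, which already gives $\mathcal{G}_\omega\in L^r(\mathbb{R}^2)$ for every $r\in[1,\infty)$, it then suffices to estimate the $L^r$- and weighted $L^2$-norm of $\nabla\mathcal{G}_\omega$.

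For \eqref{eq.fsp6}, passing to polar coordinates and rescaling $\rho\mapsto\rho/\sqrt{\omega}$,
\[
\|\nabla\mathcal{G}_\omega\|_{L^r}^r\;=\;C_{r,\omega}\int_0^{\infty}K_1(\rho)^r\,\rho\,\ud\rho\,.
\]
The integrand behaves like $\rho^{1-r}$ near $\rho=0$, which is integrable precisely when $r<2$, and decays exponentially at infinity; hence $\nabla\mathcal{G}_\omega\in L^r(\mathbb{R}^2)$ for every $r\in[1,2)$. Combined with $\mathcal{G}_\omega\in L^r(\mathbb{R}^2)$ from Lemma \ref{lem:gom1}, this yields \eqref{eq.fsp6}.

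For \eqref{eq.fsp7}, the same polar change of variables gives
\[
\big\|\,|x|^{\varepsilon}\nabla\mathcal{G}_\omega\big\|_{L^2}^2\;=\;C_{\varepsilon,\omega}\int_0^{\infty}\rho^{2\varepsilon+1}\,K_1(\rho)^2\,\ud\rho\,.
\]
Near $\rho=0$ the integrand is of order $\rho^{2\varepsilon-1}$, which is integrable as soon as $\varepsilon>0$; the exponential decay of $K_1$ takes care of infinity. This proves \eqref{eq.fsp7}.

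No real obstacle is expected here; the only minor care is in tracking the short-range asymptotic of $K_1$ (which is sharper than that of $K_0$ used previously) so as to pin down the exact threshold $r=2$ in \eqref{eq.fsp6} and the sharp condition $\varepsilon>0$ in \eqref{eq.fsp7}.
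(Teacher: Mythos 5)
Your proposal is correct, and it shares the paper's core ingredients: the explicit gradient formula $\nabla\mathcal{G}_{\omega}(x)=-\frac{\sqrt{\omega}}{2\pi}K_1(\sqrt{\omega}|x|)\,\frac{x}{|x|}$ obtained from $K_0'=-K_1$, the short-range asymptotic $K_1(\rho)\sim c\,\rho^{-1}$, and the exponential decay at infinity; your treatment of \eqref{eq.fsp7} is identical to the paper's. The one place where you genuinely diverge is \eqref{eq.fsp6}: the paper controls the full $W^{1,r}$-norm through the Bessel potential, writing $\|\mathcal{G}_{\omega}\|_{W^{1,r}}\approx\|(\omega-\Delta)^{\frac12}\mathcal{G}_{\omega}\|_{L^r}$ and invoking the Aronszajn--Smith formula $\big((\omega-\Delta)^{\frac12}\mathcal{G}_{\omega}\big)(x)=c_\omega\,K_{\frac12}(\sqrt{\omega}\,|x|)\,|\sqrt{\omega}\,x|^{-\frac12}$, whose local singularity $|x|^{-1}$ gives the same threshold $r<2$. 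You instead use the elementary definition $\|\mathcal{G}_{\omega}\|_{W^{1,r}}=\|\mathcal{G}_{\omega}\|_{L^r}+\|\nabla\mathcal{G}_{\omega}\|_{L^r}$, taking the zeroth-order part from Lemma \ref{lem:gom1} (which covers all $r\in[1,\infty)$) and computing the gradient part directly in polar coordinates. Your route buys two things: it avoids importing the fractional-power representation altogether, and it sidesteps the norm equivalence between the Bessel-potential and Sobolev scales, which at the endpoint $r=1$ is not a general fact (though harmless here, since only an upper bound for this one explicit function is needed). The paper's route, in exchange, packages the whole $W^{1,r}$-norm into a single radial integral. Both arguments pin down the same sharp exponents, and your bookkeeping of the local singularities ($\rho^{1-r}$ for \eqref{eq.fsp6}, $\rho^{2\varepsilon-1}$ for \eqref{eq.fsp7}) is exactly right.
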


\begin{proof}
Using the identity $\mathcal{F}\big((\omega-\Delta)^{\frac12}\,\mathcal{G}_{\omega}\big)(\xi)=(2\pi)^{-1}(\omega+|\xi|^2)^{-\frac{1}{2}}$, together with \cite[Eq.~(4.1) and (4.6)]{Aronszajn-Smith-I}, we obtain
\begin{equation*}\tag{*}\label{eqbm.1}
	 \big((\omega-\Delta)^{\frac{1}{2}}\mathcal{G}_{\omega}\big)(x)\;=\;\frac{\sqrt{\omega/2}}{\,\Gamma(\frac{1}{2})\pi\,}\,K_{\frac{1}{2}}(\sqrt{\omega}\,|x|)|\sqrt{\omega}\,x|^{-\frac12}.
\end{equation*}
Moreover, using \eqref{eq.i1} and the identity $K'_0=-K_1$ \cite[Eq.~(9.6.27)]{Abramowitz-Stegun-1964}, we get
\begin{equation*}\tag{**}\label{eqbm.2}
	(\nabla\mathcal{G}_{\omega})(x)\;=\;-\frac{\sqrt{\omega}}{2\pi}\,K_1(\sqrt{\omega}|x|)\,\frac{x}{|x|}\,.
\end{equation*}
Identities \eqref{eqbm.1} and \eqref{eqbm.2}, together with the asymptotics \eqref{asyK} and the fact that $K_{1}$ and $K_{\frac{1}{2}}$ are exponentially decreasing, yield
\begin{equation*}
\begin{split}
\|\mathcal{G}_{\omega}\|_{W^{1,r}}^r\;\approx\; \big\|(\lambda-\Delta)^{\frac{1}{2}}\mathcal{G}_{\omega}\big\|_{L^r}^r\;&\lesssim\; \int_{\substack{x\in\mathbb{R}^2 \\  |x|\leqslant 1}}\frac{\ud x}{\,|x|^{r}}\;\lesssim_r\: 1\,, \\
\||x|^{\varepsilon}\,\nabla\mathcal{G}_{\omega}\|_{L^2}^2\;&\lesssim\; \int_{\substack{x\in\mathbb{R}^2 \\  |x|\leqslant 1}}|x|^{2(\varepsilon-1)}\,\ud x\;\lesssim_{\varepsilon}\: 1
\end{split}
\end{equation*}
for every $r\in[1,2)$ and $\varepsilon>0$, which proves \eqref{eq.fsp6}-\eqref{eq.fsp7}.
\end{proof}

Next, we establish a convenient tri-linear bound, in the same spirit of \cite[Eq.~(4.3)]{MOS-SingularHartree-2017}.

\begin{proposition}\label{pr:trili}
Let $\alpha\in\mathbb{R}$ and assume that $w$ satisfy \eqref{ass_w} for $p:=\min\{p_1,p_2\}>1$. Then, for every $\psi_1,\psi_2,\psi_3\in H^1_{\alpha}(\mathbb{R}^2)$,
	\begin{equation}\label{trilinear}
		\|(w\ast(\psi_1\psi_2))\psi_3\|_{H^1_{\alpha}}\;\lesssim\;\prod_{j=1}^3\|\psi_j\|_{H^1_{\alpha}}\,.
	\end{equation}
\end{proposition}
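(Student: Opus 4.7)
The strategy is to use the canonical decomposition \eqref{eq:H1alphastructure}, writing each $\psi_j = f_j + c_j \mathcal{G}_\omega$ (with $\omega := 1-e_\alpha$, so that $\beta_\alpha(\omega)>0$), to identify the singular contribution of $F:=(w\ast(\psi_1\psi_2))\psi_3$ as a multiple of $\mathcal{G}_\omega$, and to show that the remaining regular component belongs to $H^1(\mathbb{R}^2)$ with the expected trilinear control. The leverage gained from the mass sub-critical assumption $p=\min(p_1,p_2)>1$ is that the convolution potential $u:=w\ast(\psi_1\psi_2)$ is not merely bounded but actually H\"{o}lder continuous; this is what will eventually compensate the $\tfrac{1}{|x|}$-type singularity of $\nabla\mathcal{G}_\omega$.

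The first step is to collect regularity on $u$. Since $p_1,p_2>1$, the dual exponents $p_1',p_2'$ are finite, and combining the embedding \eqref{eq.sob1} with \eqref{eq.fsp1} one gets $\psi_1\psi_2\in L^{p_1'}(\mathbb{R}^2)\cap L^{p_2'}(\mathbb{R}^2)$ with norm controlled by $\|\psi_1\|_{H^1_\alpha}\|\psi_2\|_{H^1_\alpha}$; Young's inequality then yields $u\in L^\infty(\mathbb{R}^2)\cap \mathcal{C}^0(\mathbb{R}^2)$, with
\[
 \|u\|_{L^\infty}\;\lesssim\;\|w\|_{L^{p_1}+L^{p_2}}\|\psi_1\|_{H^1_\alpha}\|\psi_2\|_{H^1_\alpha}\,,
\]
so that $\mu:=u(0)$ is meaningful. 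Next I would compute $\nabla u=w\ast \nabla(\psi_1\psi_2)$: expanding the Leibniz rule and using $\nabla\mathcal{G}_\omega\in L^r$ for every $r<2$ (Lemma \ref{lem:gom2}, \eqref{eq.fsp6}) together with \eqref{eq.sob1}, one finds $\nabla(\psi_1\psi_2)\in L^a$ for every $a<2$, with norm $\lesssim\|\psi_1\|_{H^1_\alpha}\|\psi_2\|_{H^1_\alpha}$. Choosing $a\in\bigl(\tfrac{2p}{3p-2},2\bigr)$ --- an interval which is non-empty \emph{precisely because} $p>1$ --- Young's inequality gives $\nabla u\in L^b$ for some $b>2$, and the Morrey embedding $W^{1,b}(\mathbb{R}^2)\hookrightarrow \mathcal{C}^{0,\gamma}(\mathbb{R}^2)$, $\gamma:=1-\tfrac{2}{b}>0$, leads to
\[
 |u(x)-\mu|\;\lesssim_\gamma\; \|w\|_{L^{p_1}+L^{p_2}}\|\psi_1\|_{H^1_\alpha}\|\psi_2\|_{H^1_\alpha}\,|x|^\gamma\,.
\]

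With $u$ in hand, the plan is to decompose
\[
 F\;=\;\underbrace{u\,f_3+c_3\,(u-\mu)\,\mathcal{G}_\omega}_{=:\,G}\;+\;c_3\,\mu\,\mathcal{G}_\omega\,,
\]
so that by \eqref{eq:H1alphastructure}--\eqref{eq:equiv-norm} it suffices to bound the scalar $|c_3\mu|$ and to show $G\in H^1(\mathbb{R}^2)$, both by the claimed trilinear right-hand side. The bound $|c_3\mu|\leqslant|c_3|\|u\|_{L^\infty}$ and the $L^2$-bounds on $u\,f_3$ and $(u-\mu)\mathcal{G}_\omega$ follow immediately from $u\in L^\infty$, $f_3\in L^2$, $\mathcal{G}_\omega\in L^2$ and $|c_3|\lesssim\|\psi_3\|_{H^1_\alpha}$. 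Applying the Leibniz rule to $\nabla G$ produces four contributions: $u\,\nabla f_3$, $(\nabla u)\,f_3$, $(\nabla u)\,\mathcal{G}_\omega$, and $c_3(u-\mu)\,\nabla\mathcal{G}_\omega$. The first is controlled by $u\in L^\infty$; the second and third by H\"{o}lder's inequality with $\nabla u\in L^b$ and $f_3,\mathcal{G}_\omega\in L^{2b/(b-2)}$ (finite since $b>2$, provided by \eqref{eq.sob1} and \eqref{eq.fsp1}).

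The main obstacle is the fourth term $(u-\mu)\nabla\mathcal{G}_\omega$: the factor $\nabla\mathcal{G}_\omega$ lies only in $L^r$ for $r<2$ (because of its $\tfrac{1}{|x|}$ behaviour near the origin) and falls short of $L^2$. This is resolved by marrying the H\"{o}lder bound $|u-\mu|\lesssim|x|^\gamma$ obtained above with the weighted estimate $\bigl\||x|^\gamma\nabla\mathcal{G}_\omega\bigr\|_{L^2}<\infty$ of Lemma \ref{lem:gom2}, \eqref{eq.fsp7} (applied with $\varepsilon=\gamma$), which absorbs the singularity at the origin; the exponential decay of $\nabla\mathcal{G}_\omega$ at infinity (cf.\ \eqref{eq:expdecay}) closes the estimate on the complementary region. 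Assembling all bounds yields \eqref{trilinear}.
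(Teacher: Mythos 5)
Your proof is correct and follows essentially the same route as the paper's: both decompose $(w\ast(\psi_1\psi_2))\psi_3$ into a regular $H^1$ part plus a multiple of $\mathcal{G}_\omega$, upgrade $u=w\ast(\psi_1\psi_2)$ to $W^{1,b}$ for some $b>2$ (the paper fixes $b=2p$), invoke the Morrey embedding to get H\"older continuity of $u$ at the origin, and absorb the $|x|^{-1}$ singularity of $\nabla\mathcal{G}_\omega$ via the weighted bound \eqref{eq.fsp7}. The only point worth tightening is that for $p>2$ Young's inequality with a finite target exponent also requires $a\leqslant p'$, so the admissible range is $\bigl(\tfrac{2p}{3p-2},\min\{2,p'\}\bigr)$ rather than $\bigl(\tfrac{2p}{3p-2},2\bigr)$ --- still non-empty for every $p>1$, so nothing breaks.
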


\begin{proof}
 It is not restrictive to directly assume $w\in L^p(\R^2)$. By assumption, for $j\in\{1,2,3\}$ one has $\psi_j=f_j+c_j\mathcal{G}_{\lambda}$ for some $f_j\in H^1(\R^2)$, $c_j\in \C$, and $\lambda>|e_{\alpha}|$, and in view of \eqref{eq:equiv-norm}  $\|\psi_j\|_{H_{\alpha}^1}\approx\|f_j\|_{H^1}+|c_j|$. For $h:=w*(\psi_1\psi_2)$ we find
\begin{equation*}\tag{a}\label{pgpg}
	\begin{split}
\|h\|_{W^{1,2p}}\;&\lesssim\; \|w*\nabla(\psi_1\psi_2)\|_{L^{2p}}\;\lesssim\; \|w\|_{L^p}\|\nabla(\psi_1\psi_2)\|_{L^{\frac{2p}{2p-1}}}\\
&\lesssim \;\|f_1f_2+(c_2f_1+c_1f_2)\mathcal{G}_{\lambda}+c_1c_2\mathcal{G}_{\lambda}^2\|_{W^{1,\frac{2p}{2p-1}}}\\
&\lesssim\;\|f_1\|_{H^1}\|f_2\|_{L^{\frac{2p}{p-1}}}+\|f_1\|_{L^{\frac{2p}{p-1}}}\|f_2\|_{H^1}\\
&\qquad +|c_2|\big(\|f_1\|_{H^1}\|\mathcal{G}_{\lambda}\|_{L^{\frac{2p}{p-1}}}+\|f_1\|_{L^{\frac{4p}{p-1}}}\|\mathcal{G}_{\lambda}\|_{W^{1,\frac{4p}{3p-1}}}\big)\\
&\qquad +|c_1|\big(\|f_2\|_{H^1}\|\mathcal{G}_{\lambda}\|_{L^{\frac{2p}{p-1}}}+\|f_2\|_{L^{\frac{4p}{p-1}}}\|\mathcal{G}_{\lambda}\|_{W^{1,\frac{4p}{3p-1}}}\big)\\
&\qquad +|c_1||c_2|\|\mathcal{G}_{\lambda}\|_{L^{\frac{4p}{p-1}}}\|\mathcal{G}_{\lambda}\|_{W^{1,\frac{4p}{3p-1}}},
\end{split}
\end{equation*}
where we used the Young inequality in the second step and the Leibniz rule together with H\"older inequality in the last step. Observe moreover that
\begin{equation*}\tag{b}\label{gpgpgp}
\mathcal{G}_{\lambda}\;\in\; L^{\frac{4p}{p-1}}(\R^2)\cap W^{1,\frac{4p}{3p-1}}(\R^2)\,,
	\end{equation*}
as follows from \eqref{eq.fsp1}, \eqref{eq.fsp6} and the assumption $p>1$. Using \eqref{pgpg}, the Sobolev embedding, and \eqref{gpgpgp}, we deduce
\begin{equation*}\tag{c}\label{compg1}
\|h\|_{W^{1,2p}}\;\lesssim \;\big(\|f_1\|_{H^1}+|c_1|\big)\big(\|f_2\|_{H^1}+|c_2|\big)\;\lesssim\; \|\psi_1\|_{H_{\alpha}^1}\|\psi_2\|_{H_{\alpha}^1}\,.
\end{equation*}
Next, we split
$$(w*(\psi_1 \psi_2))\psi_3\;=\;\phi+c_3\,h(0)\,\mathcal{G}_{\lambda}\qquad\textrm{ with }\qquad \phi:=hf_3+(h-h(0))\,c_3\,\mathcal{G}_{\lambda}\,.$$
By means of the Sobolev embedding ($W^{1,2p}(\R^2)\hookrightarrow L^{\infty}(\R^2)$, owing to the assumption $p>1$), we obtain
\begin{equation*}\tag{d}\label{fr}
\|hf_{3}\|_{H^1} \;\lesssim\; \|h\|_{W^{1,2p}}\|f_{3}\|_{L^{2p/(p-1)}}+\|h\|_{L^{\infty}}\|f_{3}\|_{H^1}\;\lesssim\;\|h\|_{W^{1,2p}}\|f_{3}\|_{H^1}
\end{equation*}
and
\begin{equation*}\tag{e} \label{tr}
 \|(h-h(0))\mathcal{G}_{\lambda}\|_{L^2}\;\lesssim\; \|h\|_{L^{\infty}}\|\mathcal{G}_{\lambda}\|_{L^2}\lesssim\|h\|_{W^{1,2p}}\,.
\end{equation*}
On the other hand, the Morrey-Sobolev embedding guarantees that $W^{1,2p}(\R^2)\hookrightarrow\mathcal{C}^{0,\varepsilon}(\R^2)$ for some $\varepsilon\equiv\varepsilon(p)>0$. Using \eqref{eq.fsp1} and \eqref{eq.fsp6} we then obtain
\begin{equation*}\tag{f}\label{pr}
	\begin{split}
		\|\nabla\big((h-h(0))\mathcal{G}_{\omega}\big)\|_{L^2}\;&\lesssim\; \|\nabla h\|_{L^{2p}}\|\mathcal{G}_{\omega}\|_{L^{2p/(p-1)}}\\
		&\qquad+\||x|^{-\varepsilon}(h-h(0))\|_{L^{\infty}}\||x|^{\varepsilon}\,\nabla\mathcal{G}_{\omega}\|_{L^2} \\
		&\lesssim\;\|h\|_{W^{1,2p}}\,.
	\end{split}
\end{equation*}
Combining \eqref{fr}, \eqref{tr}, and \eqref{pr} we deduce that $\phi\in H^1(\R^2)$, with
\begin{equation*}\tag{g}\label{estiphi}
\|\phi\|_{H^1}\;\lesssim\;\|h\|_{W^{1,2p}}\big(\|f_{3}\|_{H^1}+|c_3|\big)\;\lesssim\; \|h\|_{W^{1,2p}}\|\psi_3\|_{H_{\alpha}^1}\,.
\end{equation*}
Therefore, $(w*(\psi_1\psi_2))\psi_3=\phi+c_3\,h(0)\,\mathcal{G}_{\lambda}\in H^1(\R^2)\oplus\operatorname{Span}(\mathcal{G}_{\omega})\cong H_{\alpha}^1(\R^2)$. Moreover, combining \eqref{compg1} and \eqref{estiphi} we eventually obtain
$$\|(w*(\psi_1\psi_2))\psi_3\|_{H_{\alpha}^1}\;\lesssim\;\|\phi\|_{H^1}+|c_3|\|h\|_{L^{\infty}}\;\lesssim\;\|h\|_{W^{1,2p}}\|\psi_3\|_{H_{\alpha}^1}\;\lesssim\; \prod_{j=1}^3\|\psi_j\|_{H^1_{\alpha}}\,,$$
that is, the estimate \eqref{trilinear}.
\end{proof}

As a consequence of Proposition \ref{pr:trili}, it turns out that the Hartree non-linearity is locally Lipschitz in the energy space.

\begin{corollary}
Let $\alpha\in\mathbb{R}$ and assume that $w$ satisfy \eqref{ass_w} for $p:=\min\{p_1,p_2\}>1$. Then, setting $g(\psi):=(w*|\psi|^2)\psi$,
\begin{equation}\label{llhn}
	\|g(\psi_1)-g(\psi_2)\|_{H_{\alpha}^1}\;\lesssim\; M^2\,\|\psi_1-\psi_2\|_{H_{\alpha}^1}
\end{equation}
for every $\psi_1,\psi_2\in H_{\alpha}^1(\R^2)$ with $\|\psi_1\|_{H_{\alpha}^1},\|\psi_1\|_{H_{\alpha}^1}\leqslant M$.
\end{corollary}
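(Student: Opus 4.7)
The plan is to perform a standard difference-of-cubes decomposition and then invoke the tri-linear estimate of Proposition~\ref{pr:trili} termwise. The first step is to use the complex-algebraic identity
\begin{equation*}
|\psi_1|^2-|\psi_2|^2 \;=\; (\psi_1-\psi_2)\,\overline{\psi_1} \,+\, \psi_2\,\overline{(\psi_1-\psi_2)}
\end{equation*}
(rather than the absolute-value splitting \eqref{pdpd}, which would introduce $|\psi_j|$ whose $H^1_\alpha$-regularity is not immediate) in order to write
\begin{equation*}
g(\psi_1)-g(\psi_2) \;=\; (w*|\psi_1|^2)(\psi_1-\psi_2)
+ \bigl(w*((\psi_1-\psi_2)\overline{\psi_1})\bigr)\psi_2
+ \bigl(w*(\psi_2\,\overline{(\psi_1-\psi_2)})\bigr)\psi_2\,.
\end{equation*}
Each summand is genuinely tri-linear, of the form $(w*(a\,b))c$, with $a,b,c$ drawn from $\{\psi_1,\psi_2,\overline{\psi_1},\overline{\psi_2},\psi_1-\psi_2,\overline{\psi_1-\psi_2}\}$.

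The second step is to note that the tri-linear estimate \eqref{trilinear} is invariant under replacing any of its arguments by their complex conjugates, since $H^1_\alpha(\mathbb{R}^2)$ is closed under complex conjugation with $\|\overline{\psi}\|_{H^1_\alpha}\approx\|\psi\|_{H^1_\alpha}$. Indeed, this follows from the canonical decomposition \eqref{eq:H1alphastructure}, the norm equivalence \eqref{eq:equiv-norm}, the fact that $\mathcal{G}_\lambda$ is real-valued (cf.~\eqref{eq.i1}), and the obvious invariance of the $H^1(\mathbb{R}^2)$-norm under conjugation.

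Applying \eqref{trilinear} to each of the three summands then yields
\begin{equation*}
\|g(\psi_1)-g(\psi_2)\|_{H^1_\alpha} \;\lesssim\; \bigl(\|\psi_1\|_{H^1_\alpha}^2 + \|\psi_1\|_{H^1_\alpha}\|\psi_2\|_{H^1_\alpha} + \|\psi_2\|_{H^1_\alpha}^2\bigr)\|\psi_1-\psi_2\|_{H^1_\alpha}\,,
\end{equation*}
and the bound \eqref{llhn} is obtained upon using $\|\psi_j\|_{H^1_\alpha}\leqslant M$ for $j\in\{1,2\}$. No substantive obstacle is anticipated: the entire content of the corollary reduces to the already established Proposition~\ref{pr:trili}, and the only point requiring a small amount of care is the avoidance of absolute values by using the complex-bilinear identity above, so that the tri-linear estimate can be applied cleanly.
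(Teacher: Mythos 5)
Your proposal is correct and follows essentially the same route as the paper: the paper likewise writes $g(\psi_1)-g(\psi_2)$ as a sum of genuinely tri-linear terms via the complex-bilinear factorisation of $|\psi_1|^2-|\psi_2|^2$ (in the slightly different but equivalent form $\psi_1\overline{(\psi_1-\psi_2)}+\overline{\psi_2}(\psi_1-\psi_2)$), applies Proposition~\ref{pr:trili} to each summand, and invokes $\|\overline{v}\|_{H_\alpha^1}=\|v\|_{H_\alpha^1}$ exactly as you do. No gaps.
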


\begin{proof}
From the identity
	\begin{equation*}
\begin{split}
	g(\psi_1)-g(\psi_2)\;&=\;(w*|\psi_1|^2)(\psi_1-\psi_2)+\big(w*\big(|\psi_1|^2-|\psi_2|^2\big)\big)\psi_2 \\
	&=\;(w*(v_1\overline{v_1}))(\psi_1-\psi_2)+\big(w*\big(v_1(\overline{\psi_1-\psi_2})+\overline{\psi_2}(\psi_1-\psi_2)\big)\big)\psi_2\,,
\end{split}
\end{equation*}
 the tri-linear bound \eqref{trilinear} and the identity $\|v\|_{H_{\alpha}^1}=\|\overline{v}\|_{H_{\alpha}^1}$ (which follows from the fact that $-\Delta_{\alpha}$ is a real operator), yield \eqref{llhn}.
\end{proof}

Since the Hartree non-linearity is locally Lipschitz in the energy space, the standard Kato method is applicable to the initial value problem \eqref{cp:sh} without needing the knowledge of the adapted space $W_{\alpha}^{1,r}(\R^2)$ for $r\neq 2$: this produces eventually Proposition \ref{pr:luco}. For concreteness, we present here a direct proof.

\begin{proof}[Proof of Proposition \ref{pr:luco}]
For every $T<\max\{T_{\mathrm{max}}(\psi_0),T_{\mathrm{max}}(\widetilde{\psi}_0)\}$, the unitarity of the propagator $e^{it\Delta_{\alpha}}$ in $H_{\alpha}^1(\R^2)$ and the tri-linear estimate \eqref{trilinear} yield
\begin{equation*}
\begin{split}
\|\psi\|_{L_T^{\infty}H_{\alpha}^1}\;&\leqslant\; \|\psi_0\|_{H_{\alpha}^1}+\bigg\|\int_0^t (w*|\psi(\tau,\cdot)|^2)|\psi(\tau,\cdot)|\,\ud \tau\,\bigg\|_{L_T^{\infty}H_{\alpha}^1}\\
&\leqslant\; M+T\,\|(w*|\psi|^2)\psi\|_{L_T^{\infty}H_{\alpha}^1}\;\leqslant\; M+CT\|\psi\|^3_{L_T^{\infty}H_{\alpha}^1}
\end{split}
\end{equation*}
for a suitable constant $C>0$, and the analogous estimate for $\widetilde{\psi}$. Then, for $T^*:=\frac12(8CM^2)^{-1}$,
\begin{equation}\label{eq:contraction}
	\|\psi\|_{L_{T^*}^{\infty}H_{\alpha}^1}\;\leqslant\; 2M\,,\qquad \|\widetilde{\psi}\|_{L_{T^*}^{\infty}H_{\alpha}^1}\;\leqslant\; 2M\,.
	\end{equation}	
In view of the blow-up alternative, $T^*<\max\{T_{\mathrm{max}}(\psi_0),T_{\mathrm{max}}(\widetilde{\psi}_0)\}$. Moreover, for every $T\in(0,T^*)$,
\begin{equation*}
\begin{split}
\|\psi-\widetilde{\psi}\|_{L_T^{\infty}H_{\alpha}^1}\;&\leqslant\; \|\psi_0-\widetilde{\psi}_0\|_{H_{\alpha}^1}+ \\
& \quad +\bigg\|\int_0^t\Big((w*|\psi(\tau,\cdot)|^2)\psi(\tau,\cdot)-(w*|\widetilde{\psi}(\tau,\cdot)|^2)\widetilde{\psi}(\tau,\cdot)\Big)\,\ud\tau\bigg\|_{L_T^{\infty}H_{\alpha}^1}\\
&\leqslant\; \|\psi_0-\widetilde{\psi}_0\|_{H_{\alpha}^1}+DT\|\psi\|_{L_T^{\infty}H_{\alpha}^1}\|\widetilde{\psi}\|_{L_T^{\infty}H_{\alpha}^1}\|\psi-\widetilde{\psi}\|_{L_T^{\infty}H_{\alpha}^1}\\
&\leqslant\; \|\psi_0-\widetilde{\psi}_0\|_{H_{\alpha}^1}+4DTM^2\|\psi-\widetilde{\psi}\|_{L_T^{\infty}H_{\alpha}^1}
\end{split}
\end{equation*}
for a suitable constant $D>0$, having used the unitarity of $e^{it\Delta_{\alpha}}$ in $H_{\alpha}^1(\R^2)$ in the first step,  \eqref{llhn} in the second, and \eqref{eq:contraction} in the last. The thesis then follows by choosing $T(M):=\min\{T^*,(8DM^2)^{-1}\}$.
\end{proof}

\section{Global well-posedness in the energy space}\label{sec:GWP}

We complete in this Section the proof of Theorem \ref{th:main_ex} by extending the local analysis of Section \ref{sec:LWP} globally in time. Let us stress here the different reasoning needed depending on whether the non-linearity is focusing or defocusing.




\begin{proof}[Proof of Theorem \ref{th:main_ex}]
 As already mentioned, the local well-posedness of the initial value problem \eqref{cp:shmain} in the energy space $H^1_\alpha(\mathbb{R}^2)$ is established in Proposition \ref{pr:local_wp}.

 In the \emph{defocusing} regime $\theta=1$ the unique local solution $\psi\in\mathcal{C}([0,T_{\mathrm{max}}),H^1_\alpha(\mathbb{R}^2))$ provided by Proposition \ref{pr:local_wp} is also global in time as a direct consequence of the blow-up alternative and the conservation of mass and energy. Indeed,
 \[
  \begin{split}
   \|\psi(t,\cdot)\|^2_{H^1_\alpha}\;&=\;(-\Delta_\alpha)[\psi(t,\cdot)]+(1-e_\alpha)\|\psi(t,\cdot)\|_{L^2}^2 \\
   &\lesssim_\alpha\,\mathcal{E}(t)+\mathcal{M}(t)\;=\;\mathcal{E}(0)+\mathcal{M}(0)\qquad \forall t\in[0,T_{\mathrm{max}})
  \end{split}
 \]
 (having applied \eqref{eq:energynormH1alpha} in the first step and Proposition \ref{pr:local_wp}(ii) in the third, and having used the positivity of $\theta$ in the second), whence $T_{\mathrm{max}}=+\infty$ on account of Proposition \ref{pr:local_wp}(i), and $\sup_{t>0}\|\psi(t,\cdot)\|_{H^1_\alpha}<+\infty$.

 The next case to consider is the \emph{focusing} ($\theta=-1$) and \emph{mass sub-critical} ($p=\mathrm{min}\{p_1,p_2\}>1$) regime. Now one has
  \[
  \begin{split}
   \|\psi(t,\cdot)\|^2_{H^1_\alpha}\;&=\;(-\Delta_\alpha)[\psi(t,\cdot)]+(1-e_\alpha)\|\psi(t,\cdot)\|_{L^2}^2  \\
      &\lesssim_\alpha\,\mathcal{E}(t)+\mathcal{M}(t)+	\int_{\mathbb{R}^2}\big(w*|\psi(t,\cdot)|^2\big)(x)|\psi(t,x)|^2\,\ud x   \\
      &=\;\mathcal{E}(0)+\mathcal{M}(0)+\int_{\mathbb{R}^2}\big(w*|\psi(t,\cdot)|^2\big)(x)|\psi(t,x)|^2\,\ud x \qquad \forall t\in[0,T_{\mathrm{max}})\,.
  \end{split}
 \]
  The only $t$-dependent summand in the r.h.s.~above is controlled, by means of \eqref{eq:gani} and again the mass conservation, so that
  \[
   \|\psi(t,\cdot)\|^2_{H^1_\alpha}\;\lesssim_{\alpha,p}\, 1+\|\psi(t,\cdot)\|^{\frac{2}{p}}_{H^1_\alpha}\qquad \forall t\in[0,T_{\mathrm{max}})\,.
  \]
 Since $\frac{2}{p}<2$, the above estimate prevents $\|\psi(t,\cdot)\|_{H^1_\alpha}$ to blow up in finite time, whence again $T_{\mathrm{max}}=+\infty$ owing to Proposition \ref{pr:local_wp}(i), and $\sup_{t>0}\|\psi(t,\cdot)\|_{H^1_\alpha}<+\infty$.

  The last regime to consider is $\theta=-1$, $p=1$ (focusing and \emph{mass critical}). In this case \eqref{eq:gani} takes the form
  \[
   \int_{\mathbb{R}^2}\big(w*|\psi(t,\cdot)|^2\big)(x)|\psi(t,x)|^2\,\ud x\;\leqslant\;C_{\textsc{gn}}\|\psi(t,\cdot)\|_{H_{\alpha}^1}^{2} \|\psi(t,\cdot)\|_{L^2}^{2}\,,
  \]
 having indicated by $C_{\textsc{gn}}\equiv C_{\textsc{gn}}(\alpha,\|w\|_{L^{p_1}+L^{p_2}})>0$ the \emph{optimal} constant of such a Gagliardo-Nirenberg type inequality. Thus now
  \[
  \begin{split}
   \|\psi(t,\cdot)\|^2_{H^1_\alpha}\;&=\;(-\Delta_\alpha)[\psi(t,\cdot)]+(1-e_\alpha)\|\psi(t,\cdot)\|_{L^2}^2  \\
      &=\;2\mathcal{E}(t)+\frac{1}{2}\int_{\mathbb{R}^2}\big(w*|\psi(t,\cdot)|^2\big)(x)|\psi(t,x)|^2\,\ud x + (1-e_\alpha)\mathcal{M}(t) \\
      &\leqslant\;\big(2\mathcal{E}(t)+(1-e_\alpha)\mathcal{M}(t)\big)+\frac{1}{2}\,C_{\textsc{gn}}\,\mathcal{M}(t) \|\psi(t,\cdot)\|^2_{H^1_\alpha}\qquad \forall t\in[0,T_{\mathrm{max}})\,.
  \end{split}
 \]
 Introducing the \emph{additional assumption} $\mathcal{M}(0)<2\,C_{\textsc{gn}}^{-1}$ and exploiting mass and energy conservation, we deduce from the latter estimate that
 \[
  \|\psi(t)\|_{H_{\alpha}^1}^2\;\leqslant\; \big(1-{\textstyle{\frac{1}{2}}}C_{\textsc{gn}}\mathcal{M}(0)\big)^{-1}\big(2\mathcal{E}(0)+(1-e_{\alpha})\mathcal{M}(0)\big)\qquad \forall t\in[0,T_{\mathrm{max}})\,,
 \]
 whence $T_{\mathrm{max}}=+\infty$ owing to Proposition \ref{pr:local_wp}(i), and $\sup_{t>0}\|\psi(t,\cdot)\|_{H^1_\alpha}<+\infty$.
\end{proof}


\def\cprime{$'$}

\end{document}